\renewcommand{\Re}{\operatorname{Re}}
\renewcommand{\Im}{\operatorname{Im}}
\numberwithin{figure}{section}
\theoremstyle{plain}
\theoremstyle{plain}
\newtheorem{thm}{Theorem}
\newtheorem{definition}{Definition}
\newtheorem{lm}{Lemma}[section]
\newtheorem{remark}{Remark}[section]
\numberwithin{equation}{section}
\journal{LINEAR ALGEBRA AND ITS APPLICATIONS}
\begin{document}

\begin{frontmatter}

\title{Asymptotics of eigenvalues of large symmetric Toeplitz matrices with smooth simple-loop symbols}

\author{A.A. Batalshchikov}
\address{ASM Solutions LLC, Nyzhnyaya Pervomayskaya Str. 48/9, Moscow, Russia, e-mail: abatalshikov@gmail.com}

\author{S.M. Grudsky}
\address{CINVESTAV, Departamento de Mathematicas, Ciudad de Mexico 07360, Mexico, e-mail: grudsky@math.cinvestav.mx,  tel.+52 55 5747 3800 ext.6457}

\author{I.S.~Malisheva}
\address{344090, South Fedederal University, Rostov-on-Don, Milchakova str. 8-A, Russia}

\author{S.S.~Mihalkovich}
\address{344090, South Fedederal University, Rostov-on-Don, Milchakova str. 8-A, Russia}

\author{E.~Ram\'{i}rez de Arellano}
\address{CINVESTAV, Departamento de Mathematicas, Av. Instituto Politécnico Nacional 2508, Mexico, tel.+52 55 5061 3869}

\author{V.A.~Stukopin}
\address{1) Moscow Institute of Physics and Technology (MIPT)}
\address{2) SMI of VSC RAS (South Mathematical Institute of Vladikavkaz Scientific Center of Russian Academy of Sciences)}
\address{3) Interdisciplinary  Scientific Center J.-V. Poncelet (CNRS UMI 2615) and  Scoltech    Center for Advanced Study}

\begin{abstract}
This paper is devoted to the asymptotic behavior of all eigenvalues of  Symmetric (in general non Hermitian) Toeplitz matrices with moderately smooth symbols which trace out a simple loop on the complex plane line as the dimension of the matrices increases to infinity. The main result describes the asymptotic structure of all eigenvalues. The constructed expansion is uniform with respect to the number of eigenvalues. \\
\end{abstract}

\begin{keyword}
Toeplitz matrices \sep Eigenvalues \sep Asymptotic expansions
\MSC Primary 47B35 \sep Secondary 15A18 \sep 41A25 \sep 65F15
\end{keyword}

\end{frontmatter}


\section{Introduction}

Given a function $a(t)$ in $L^1$ on the complex unit circle $\mathbb{T}$ we denote by $a_l$ the l-th Fourier coefficient
\begin{equation}
a_l = \frac{1}{2\pi} \int_0^{2\pi} a(e^{ix})e^{-ilx}dx,
\end{equation}
$l \in \mathbb{Z}$, and by $T_n(a)$ the $n \times n$ Toeplitz matrix $(a_{j-k})_{j,k=1}^n$.

The object of our study is the behavior of the spectral characteristics (eigenvalues and singular numbers, eigenvectors, determinants, condition numbers, etc.) of Toeplitz matrices in the case when the dimension of the matrices tends to infinity.
It has been intensively studied for a century (see \cite{grenander1958toeplitz}, \cite{Uni}, \cite{BS06}, \cite{BG2005}, and literature cited there).
This problem is important for statistical mechanics and other applications (\cite{grenander1958toeplitz}, \cite{DIK13}, \cite{Diaconis03patternsin}, \cite{Kad1966}, \cite{MW1973}).
First of all, we mention the numerous versions of the Szegö theorem on the asymptotic distribution of eigenvalues and theorems of Abram-Parter type on the asymptotic distribution of singular numbers (\cite{Parter1986}, \cite{Avram1988}, \cite{ZT2007}, \cite{BGM08.1}).
There is a rich literature devoted to the asymptotics of the determinants of Toeplitz matrices. (see monographs \cite{Uni}, \cite{BS06}, papers \cite{WiOT}, \cite{DIKann}, \cite{DIK2012}, \cite{DIK13} and literature cited there). 
Much attention has been paid to the asymptotics of the largest and smallest eigenvalues (\cite{KaMuSe}, \cite{Pa100}, \cite{Wi88}).

We note that articles on the individual asymptotics of all eigenvalues have appeared quite recently. Thus, cases of real-valued symbols (self-adjoint Toeplitz operators) satisfying the so-called SL (Simple-Loop) condition were studied in the articles \cite{Grudsky2011}, \cite{DIK13}, \cite{BBGM15}.
In these articles, the cases of polynomial, infinitely smooth, having 4 continuous derivatives of symbols were successively considered.

Finally, a symbol which has continuous first derivative and satisfies certain additional conditions at the minimum and maximum points is considered in \cite{BGM2017}. In \cite{Barrera2017}, the case of a symbol that has a 4th order zero and thus does not satisfy the conditions of a simple loop is studied.

The asymptotic expansions of all eigenvalues are constructed in the case of essentially complex-valued symbols having singularities of the Fisher-Hartwig type in the articles \cite{DaiGearyKadanoff}, \cite{Kadanoff2009}, \cite{BBG2012}, \cite{BBGM11}.
We note that the complex-valued (non self-adjoint) case is more complicated than the real-valued one, because finding the location of the limit set of eigenvalues of Toeplitz matrices with $n$ tending to infinity is a nontrivial question.
This question is resolved in \cite{WiOT} for the case of the Fisher-Hartwig singularities
considered in the above-mentioned papers. 
In this paper it is shown that the limit set coincides with the image of the symbol in the complex plane.

There is another well-known case, when the limit set also coincides with the symbol image.
It is the case when this image is a ``curve without an interior''. 
Using this fact, in \cite{BGS2015} we solved the problem of the asymptotic behavior of the eigenvalues of Toeplitz symmetric matrices with a polynomial symbol satisfying the following condition. Namely, the symbol passes its own curve-image exactly two times, when the variable $t$ makes one turn on the unit circle.  

Note that the asymptotic structure of the eigenvectors in the case of $n \to \infty$ is considered in the papers \cite{BGRS2015}, \cite{BBGM2012}, \cite{BGM2011}.

In this paper we generalize the results of the article \cite{BGS2015}, extending the class of symbols from polynomials to the class of smooth functions that have only two continuous derivatives. 
For this purpose the method used in \cite{BGS2015} needed a significant change.
The main obstacle here is that the considered symbol is defined only on the unit circle and does not allow, in general, unlike the case of the polynomial symbol of \cite{BGS2015}, an analytic continuation to the neighborhood of the unit circle $\mathbb{T}$.
At the same time, the eigenvalues are not located on the image-curve of the symbol $a(t)$, but they are located in some of its neighborhoods.
Therefore, the question arises about the continuation of $a(t)$ to the complex plane.
In this regard, we replace the symbol with a polynomial approximation (first terms of the Laurent series) of $a_n(t)$ of degree $(n-1)$ (see (\ref{eq:2.4})) and note that the operator corresponding to the Toeplitz matrix does not change.
The function $a_n(t)$ is considered in an annulus with the width of order $1/n$, containing $\mathbb{T}$.
We show that all eigenvalues lie in the image of the mapping $w = a_n(z)$ of this annulus.
On the one hand, it is necessary to transfer the methods and results of \cite{BBGM15}, \cite{BGM2017} from the real segment to a region in the complex plane.
On the other hand, we ensure that all constructions are uniform with respect to the parameters of the family of functions $\{a_n\}$, $n \in \mathbb {N}$.

The paper is organized as follows. 
Section 2 contains the main results of the work.
We consider in Section 3 an example with numerical calculations of all eigenvalues for different values of $n$. The presented figures bring up several questions about the location the eigenvalues. The main results that are formulated in Section 2 allows to answer these questions. In particular we give the asymptotics of the eigenvalues that are located near the points $z=a(1)$ and $z=a(-1)$ 
(see Lemma~\ref{lm:lambda_asymp_1}), where the derivative of the symbol vanishes. 
This result is a generalization to the complex case of the well-known results about the asymptotics
of the smallest and largest eigenvalues of large Toeplitz matrices with real value symbols 
(see \cite{Parter1986}, \cite{Wi88}).

Section 4 presents the results on the smoothness properties of the functions $a(t)$ and $a_n(t)$ that we need and the functions $b(t, \lambda)$, $b_n(t, \lambda)$ are constructed on the basis of $a(t)$, $a_n(t)$ (see (\ref{eq:b_hat_def}), (\ref{eq:b_n_hat_def})). In Section 5, a nonlinear equation is introduced for determining the eigenvalues and then its asymptotic properties are investigated. Section 6 is devoted to the analysis of the solvability of the above mentioned nonlinear equation in the complex domain surrounding the image-curve of the symbol $a(t)$.
The main results are proved in section 7.

\section{The main results}

Let $\alpha \ge 0$. We denote by $W^{\alpha}$ the weighted Wiener algebra of all complex-valued functions $a: \mathbb{T} \rightarrow \mathbb{C}$, whose Fourier coefficients satisfy
\begin{equation}
\| a \|_{\alpha} := \sum_{j= -\infty}^{\infty}|a_j|(|j| + 1)^{\alpha} < \infty.
\end{equation}

Let $m$ be the integer part of $\alpha$. It is readily seen that if $a \in W^{\alpha}$ then the function $g$ defined by $g(\varphi): = a(e^{i\varphi})$ is a $2\pi$-periodic $C^m$ function on $\mathbb{R}$. In what follows we consider complex-valued symmetric simple-loop functions in $W^{\alpha}$. To be more precise, for $\alpha \geq 2$, we let $\mathsf{CSL}^{\alpha}$ be the set of all $a \in W^{\alpha}$ such that $g(\varphi) = a(e^{\mathrm{i}\varphi})$ has the following properties:\\
\newcounter{N1}
\begin{list}{(\arabic{N1})}{\usecounter{N1}}
\item \label{condition11}
the function $g(\varphi)$ is symmetrical in the following sense:
\begin{equation} \label{cond1eq}
g(\varphi) = g(2\pi - \varphi), \qquad \varphi \in [0, 2\pi] .
\end{equation}
(It is equivalent to the conditon $a_j=a_{-j}$, $j = 1, 2, \ldots$ .)

\item \label{condition12}
$\Im(g)$ is a simple (without self-intersections) arc with non-coincident end points $M_0, M_1$: $g(0) = g(2\pi) = M_0$, $g(\pi) = M_1$, $M_0 \ne M_1$, so that
$g'(\varphi) \ne 0$ for $\varphi \in (0,\pi)$ and $g''(0) = g''(2\pi) \ne 0$,  $g''(\pi) \ne 0$.
\end{list}

It should be noted that if we have (\ref{cond1eq}) then
\begin{equation}
    \label{gCond1conseq}
    g'(0) = g'(2\pi) = g'(\pi) = 0.
\end{equation}

We introduce the following notation. Let $f(t) = \sum_{j=-\infty}^{\infty}f_j t^j$ $(t \in \mathbb{T})$
be a function from the space $L_2(\mathbb{T})$ such that
$\sum_{j=-\infty}^{\infty} |f_j|^2 < \infty$.
We consider the projectors
\[
\left[P_n f \right](t) := \sum_{j=0}^{n-1}f_j t^j, \qquad n=1, 2, \ldots .
\]

We will also denote the image of the operator $P_n$ by $L_2^{(n)}$.
Note that for the symbol $a \in L_{\infty}(\mathbb{T})$ the Toeplitz matrix $T_n(a)$
can be identified with the operator
\[
    T_n(a):L_2^{(n)} \to L_2^{(n)}, \text{\ defined by \ } T_n(a)f = P_n(a f).
\]

We introduce then the functions
\begin{equation} \label{eq:2.4}
a_n(t)=\sum_{j=-(n-1)}^{n-1}a_j t^j
\end{equation}
and note that
\begin{equation}
    T_n(a) = T_n(a_n).
\end{equation}

Therefore, we will use the function $a_n(t)$ instead of the symbol $a(t)$, when it will be convenient, and respectively the function $g_n(\varphi) := a_n(e^{\mathrm{i} \varphi})$ instead of $g(\varphi)$.

Note that the functions $a_n(t)$ and $g_n(\varphi)$ satisfy all conditions of the definition of $\mathsf{CSL}^\alpha$ for a sufficiently large $n$.
Besides, if $a(t) \in W^\alpha$ then
\begin{equation}
    \label{anEstim1}
    \sup_{t\in \mathbb{T}} \left| a(t)-a_n(t) \right| = o\left( \sfrac{1}{n^\alpha} \right), \quad n \to \infty
\end{equation}
(see Lemma \ref{lm:f_w_alpha}, i) below).

Introduce the sets:
\begin{equation}
    \label{r_a_set}
    \mathcal{R}(a):= \{g(\varphi):\ \varphi \in (0, \pi)\};
\end{equation}
\begin{equation}
    \label{p_n_a_set}
    \Pi_n(a) = \left\{ \psi=\varphi+\mathrm{i}\delta\, \vert \, \varphi \in [c n^{-1}, \pi-c n^{-1}],\ \delta \in [-C n^{-1}, C n^{-1}] \right\},
\end{equation}
where $c$ small enough and $C$ large enough are fixed positive numbers. Let us denote
\begin{equation}
    \mathcal{R}_n(a) := \{g_n(\psi):\ \psi \in \Pi_n(a)\}.
\end{equation}

It is well known (see for example \cite{Uni}) that the limit spectrum of the operator family
$\left\{ T_n(a)\right\}_{n=1}^\infty$ coincides with the curve $\mathcal{R}(a)$. 
Thus, for sufficiently large $n$ the spectrum of $T_n(a)$
is located in the neighborhood of $\mathcal{R}(a)$. Moreover, we will show that
$\mathrm{sp} T_n(a) \subset \mathcal{R}_n(a)$.

According to the conditions (\ref{condition11})-(\ref{condition12}), for any $\lambda \in \mathcal{R}(a)$ there exists exactly one $\varphi_1(\lambda) \in (0,\pi)$
such that $g(\varphi_1(\lambda))=\lambda$.
The symmetry implies that the function $\varphi_2(\lambda):=(2\pi - \varphi_1(\lambda)) \in (\pi,2\pi)$ also has this property: $g(\varphi_2(\lambda))=\lambda$.

For all $\lambda_0 \in \mathcal{R}(a)$ consider the function
\begin{equation}
    \label{eq:b_hat_def}
    \hat{b}(t,\lambda_0) = \frac{(a(t)-\lambda) e^{\mathrm{i}\varphi_1(\lambda_0)}}
    {(t-e^{\mathrm{i}\varphi_1(\lambda_0)}) (t^{-1} - e^{\mathrm{i}\varphi_1(\lambda_0)})}.
\end{equation}
(Note that $e^{-\mathrm{i} \varphi_2(\lambda)} = e^{\mathrm{i} \varphi_1(\lambda)}$,
therefore the function $\left( t^{-1} - e^{\mathrm{i} \varphi_1(\lambda)}\right)$ goes to zero at a single point $t_2 = e^{\mathrm{i} \varphi_2(\lambda)}$).
In a similar manner, for all $\lambda \in \mathcal{R}_n(a)$ there exist
$\varphi_{1,n}(\lambda) \in \Pi_n $, such that
\begin{equation}
    \label{eq:gn29}
    g_n(\varphi_{1,n}(\lambda)) = g_n(2 \pi - \varphi_{1,n}(\lambda)) = \lambda.
\end{equation}
It will be shown below that the point $\varphi_{1,n}(\lambda)$, satisfying condition (\ref{eq:gn29})
(see Lemma \ref{lm:g_n_psi}) is unique.
    
Together with (\ref{eq:b_hat_def}) we consider the function
\begin{equation}
    \label{eq:b_n_hat_def}
    \hat{b}_n(t,\lambda) = \frac{(a_n(t)-\lambda) e^{\mathrm{i}\varphi_{1,n}(\lambda)}}
    {(t-e^{\mathrm{i}\varphi_{1,n}(\lambda)}) (t^{-1} - e^{\mathrm{i}\varphi_{1,n}(\lambda)})}, \qquad \lambda \in \mathcal{R}_n(a),
\end{equation}
which is a polynomial of powers of $t$ and $t^{-1}$ of finite degree
and does not vanish in the domain $t \in \mathbb{T}$, $\lambda \in \mathcal{R}_n(a)$. 
We show that this function allows a Wiener-Hopf factorization of the form
\[
\hat{b}_n(t, \lambda) = \hat{b}_{n,+}(t, \lambda) \hat{b}_{n,+}(t^{-1}, \lambda),
\]
where $\hat{b}_{n,+}(t, \lambda)$ is a polynomial of degree $(n-2)$ of the variable $t$.
We introduce the function
\begin{equation}
    \label{eq:theta_n_def}
    \theta_n(\lambda) = \log \frac{\hat{b}_{n,+}(e^{\mathrm{i}\varphi_{1,n}(\lambda)}, \lambda)}
    {\hat{b}_{n,+}(e^{-\mathrm{i}\varphi_{1,n}(\lambda)}, \lambda)}, \qquad \lambda \in \mathcal{R}_n(a).
\end{equation}

Note that in the case of $\lambda \in \mathcal{R}(a)$, ($e^{\mathrm{i} \varphi_1(\lambda)} \in \mathbb{T}$) (\ref{eq:theta_n_def}) the function $\theta_n(\lambda)$ can be represented as
\begin{equation}
    \label{eq:teta_n_lambda}
    \theta_n(\lambda) = \frac{1}{2 \pi \mathrm{i}} \int_{\mathbb{T}}
    \frac{\log \hat{b}_n(\tau, \lambda)}{\tau - e^{\mathrm{i}\varphi_{1,n}(\lambda)}}d\tau
    -\frac{1}{2 \pi \mathrm{i}} \int_{\mathbb{T}}
    \frac{\log \hat{b}_n(\tau, \lambda)}{\tau - e^{-\mathrm{i}\varphi_{1,n}(\lambda)}}d\tau .
\end{equation}
We also introduce the function
\begin{equation}
    \label{eq:teta_lambda}
    \theta(\lambda) = \frac{1}{2 \pi \mathrm{i}} \int_{\mathbb{T}}
    \frac{\log \hat{b}(\tau, \lambda)}{\tau - e^{\mathrm{i}\varphi_1(\lambda)}}d\tau
    -\frac{1}{2 \pi \mathrm{i}} \int_{\mathbb{T}}
    \frac{\log \hat{b}(\tau, \lambda)}{\tau - e^{-\mathrm{i}\varphi_1(\lambda)}}d\tau,
    \quad \lambda \in \mathcal{R}(a),
\end{equation}
where the integrals in (\ref{eq:teta_n_lambda}) and (\ref{eq:teta_lambda})
are understood in the sense of the principal value.
It is more convenient for us to consider the introduced functions as functions of the parameter
$s := \varphi_{1,n}(\lambda)$
($\lambda = g_n(s)$):
\begin{equation}
    \label{eq:eta_s_def}
    \eta(s) := \theta(g(s)), \qquad s \in (0,\pi)
\end{equation}
and
\begin{equation}
    \label{eq:eta_n_s}
    \eta_n(s) := \theta_n(g_n(s)), \qquad s \in \Pi_n(s)
\end{equation}

Introduce the values
\begin{align}
    \label{eq:d_j_n_def}
    d_{j,n} &= \frac{\pi j}{n+1}, \qquad j=1, 2, \ldots, n,
    \\
    \label{eq:e_j_n_def}
    e_{j,n} &= d_{j,n} - \frac{\eta_n(d_{j,n})}{n+1}, \qquad j=1, 2, \ldots, n.
\end{align}

We will also need the following small areas:
\begin{equation}
    \label{eq:pi_j_n_def0}
    \Pi_{j,n}(a) := \left\{  s \in \Pi_{n}(a),\ |s - e_{j,n}| \le \frac{c_n}{n+1} \right\},
    \quad j=1, 2, \ldots, n,
\end{equation}
where the constants $c_n$ does not depend on $j$ and decrease to 0 with $n \to \infty$ (see (\ref{eq:pi_j_n_def1})).
Now we are ready to formulate the main results of this work.
Let $\lambda_j^{(n)}$, $j=1, 2, \ldots, n$ be a numeration of the eigenvalues of the operator $T_n(a)$.
\begin{thm} \label{thm:a1}
Let $a$ be a symbol such that $a \in \mathsf{CSL}^\alpha$, $\alpha \ge 2$. Then, for a sufficiently large
natural number $n$, the following statements hold:
\begin{enumerate}[i)]
    \item all eigenvalues $T_n(a)$ are different, and $\lambda_{j}^{(n)}\in g(\Pi_{j,n})$ for $j=1, 2, \ldots, n$,
    \item the values $s_{j,n}$ such that $\lambda_j^{(n)} = g_n(s_{j,n})$ satisfy the equation
    \begin{equation}
        \label{eq:sjn1}
        (n+1) s + \eta_n(s) = \pi j + \Delta_n(s),
        \qquad j=1, 2, \ldots, n
    \end{equation}
    with $|\Delta_n(s)|=o(1/n^{\alpha-2})$ where $\Delta_n(s) \to 0$
    as $n \to \infty$ uniformly respect to $s \in \Pi_n(a)$.
    \item Equation (\ref{eq:sjn1}) has a unique solution in the domain $\Pi_{j,n}$.
\end{enumerate}
\end{thm}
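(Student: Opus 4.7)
The overall plan is to reduce the eigenvalue problem for $T_n(a_n)$ to the scalar transcendental equation (\ref{eq:sjn1}) and then analyse this equation by a Rouché-type argument on each small disk $\Pi_{j,n}(a)$. The reduction is the content of Section 5: an eigenvalue $\lambda=g_n(s)$ of $T_n(a_n)=T_n(a)$ corresponds to a nontrivial $f\in L_2^{(n)}$ with $P_n[(a_n-\lambda)f]=0$. Factoring out the two simple zeros $t=e^{\pm\mathrm{i}\varphi_{1,n}(\lambda)}$ of $a_n(t)-\lambda$ yields the nonvanishing polynomial symbol $\hat b_n(t,\lambda)$ of (\ref{eq:b_n_hat_def}), and applying its Wiener--Hopf factorization $\hat b_n(t,\lambda)=\hat b_{n,+}(t,\lambda)\hat b_{n,+}(t^{-1},\lambda)$ turns the solvability condition of the eigenvalue problem into the vanishing of a $2\times 2$ determinant. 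Taking logarithms of this determinant and using the definitions (\ref{eq:theta_n_def})--(\ref{eq:eta_n_s}) produces the phase $(n+1)s+\eta_n(s)-\pi j$; any remainder collects into $\Delta_n(s)$. This is essentially the same mechanism used in the self-adjoint treatments of \cite{BBGM15} and \cite{BGM2017}, now carried out in the complex strip $\Pi_n(a)$.

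The step I expect to be the main obstacle is the uniform estimate $|\Delta_n(s)|=o(n^{-(\alpha-2)})$ on $\Pi_n(a)$. The starting point is the approximation bound (\ref{anEstim1}) for $a-a_n$, which must be promoted to a bound on $\hat b-\hat b_n$ and its first derivatives, uniformly in $t\in\mathbb{T}$ and $s\in\Pi_n(a)$; this is exactly the smoothness analysis done in Section 4, and two derivatives are consumed along the way. One derivative is lost because the denominator $(t-e^{\mathrm{i}\varphi_1})(t^{-1}-e^{\mathrm{i}\varphi_1})$ of (\ref{eq:b_hat_def}) forces us to compare the true and the approximate branch points $\varphi_1$ and $\varphi_{1,n}$; the second derivative is lost when the singular Cauchy integral (\ref{eq:teta_n_lambda}) defining $\theta_n$ is shifted off $\mathbb{T}$ into the complex strip. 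The assumption $\alpha\ge 2$ is precisely what survives these two losses, and with the estimates of Sections 4--6 in hand the claim of (ii) follows.

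For (iii) I apply Rouché's theorem on the boundary of $\Pi_{j,n}(a)$ to
\begin{equation*}
F_n(s):=(n+1)s+\eta_n(s)-\pi j\qquad\text{and}\qquad G_n(s):=F_n(s)-\Delta_n(s).
\end{equation*}
The derivative $F_n'(s)=(n+1)+\eta_n'(s)$ is dominated by $n+1$ because $\eta_n'$ is uniformly bounded on $\Pi_n(a)$ (Section 6), and a Taylor expansion around $e_{j,n}$ together with the definition (\ref{eq:e_j_n_def}) gives $|F_n(s)|\ge c_n(1+o(1))$ on the boundary $|s-e_{j,n}|=c_n/(n+1)$. By choosing $c_n$ in (\ref{eq:pi_j_n_def1}) to decay to zero sufficiently slowly so that $c_n$ still dominates $|\Delta_n(s)|=o(n^{-(\alpha-2)})$, Rouché's theorem yields exactly one solution $s_{j,n}\in\Pi_{j,n}(a)$. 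For part (i), the disks $\Pi_{j,n}(a)$ are pairwise disjoint because their centers are spaced by $\pi/(n+1)+O(n^{-2})$ while their radii are $c_n/(n+1)=o(1/n)$, and the map $s\mapsto g_n(s)$ is injective on each $\Pi_{j,n}(a)$ by the univalence statement for $g_n$ on $\Pi_n(a)$ established in Section 4. Hence the $n$ solutions $s_{j,n}$ produce $n$ distinct eigenvalues $\lambda_j^{(n)}=g_n(s_{j,n})\in g_n(\Pi_{j,n}(a))$, which by cardinality exhaust $\mathrm{sp}\,T_n(a)$ and complete the proof.
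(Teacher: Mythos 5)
Your reduction to the scalar equation and the outline of the $\Delta_n$ estimate track the paper's Sections 5--6 accurately, and your disjointness observation for the $\Pi_{j,n}(a)$ matches the paper's counting step. The genuine difference is in part (iii): you run a Rouché argument on $\partial\Pi_{j,n}(a)$ and control $F_n'(s)=(n+1)+\eta_n'(s)$ by claiming that $\eta_n'$ is ``uniformly bounded on $\Pi_n(a)$.'' This is false in the range $2\le\alpha<3$, which is precisely the borderline case the theorem must cover. By Lemma~\ref{lm:eta_props3}, for $\alpha\ge 2$ one only gets $\eta\in\mathcal{C}^{m+\mu}([0,\pi])$ with $m=[\alpha-2]$, so for $2\le\alpha<3$ the function $\eta$ is merely H\"older (and for $\alpha=2$ merely continuous); correspondingly $\eta_n'$ is only controlled by $O(n^{(m+1)-(\alpha-2)})$, which blows up like $O(n^{3-\alpha})$. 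A uniform bound $\sup_n\sup_s|\eta_n'(s)|<\infty$ is invoked in the paper only in Theorem~\ref{thm:solutions_5_3}(ii) and Lemma~5.6, and both explicitly restrict to $\alpha\ge3$. So your Taylor/derivative Rouché step breaks exactly where the theorem needs to work.

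The paper sidesteps this by not differentiating $\eta_n$ at all: $c_n$ in (\ref{eq:pi_j_n_def1}) is built from the modulus of continuity $w_{\eta_n}$, Lemma~\ref{lm:s_princip} shows the iteration map sends $\Pi_{j,n}(a)$ into itself using only $w_{\eta_n}$, existence comes from extracting a convergent subsequence of the iterates, and uniqueness follows from the cardinality bound $\#\mathrm{sp}\,T_n(a)\le n$ together with the disjointness of the $\Pi_{j,n}(a)$ and the injectivity of $g_n$ on $\Pi_n(a)$ (Lemma~\ref{lm:g_n_psi}). Note also that $c_n$ is not a free parameter you may tune to dominate $|\Delta_n|$; it is defined by (\ref{eq:pi_j_n_def1}), and it is that specific choice (with $c_n\ge2\|R_6^{(n)}\|_\infty$) that makes the self-mapping lemma hold. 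Your Rouché idea can be repaired by replacing the derivative bound with modulus-of-continuity estimates (compare $F_n$ first with the linear map $(n+1)(s-e_{j,n})$, using $|\eta_n(s)-\eta_n(d_{j,n})|\le w_{\eta_n}(\cdot)$ and the inequality $c_n\ge w_{\eta_n}(2\|\eta_n\|_\infty/(n+1))$ on the boundary), but as written the step fails for $2\le\alpha<3$.
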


\begin{thm}
\label{thm:sjn1}
Under the conditions of the Theorem \ref{thm:a1}
\[
    s_{j,n} = d_{j,n} + \sum_{k=1}^{[\alpha]-1}
    \frac{p_k(d_{j,n})}{(n+1)^k} + \Delta_2^{(n)}(j)
\]
where
\[
\Delta_2^{(n)}(j) =
\begin{cases}
o(1/n),\ \alpha=2 \\
O(1/n^{\alpha-1}), \ \alpha>2
\end{cases}
\]
as $n \to \infty$ uniformly in $j$. The functions  $p_k$ can be calculated explicitly;
in particular
\[
    p_1(s) = -\eta(s), \quad p_2(s)= \eta(s)\eta'(s).
\]
\end{thm}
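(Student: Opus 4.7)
The plan is to solve the implicit equation (\ref{eq:sjn1}) of Theorem \ref{thm:a1} by bootstrap iteration, generating the asymptotic expansion term-by-term in powers of $h := 1/(n+1)$. Rewrite (\ref{eq:sjn1}) as the fixed-point relation
\[
s = d_{j,n} - \frac{\eta_n(s)}{n+1} + \frac{\Delta_n(s)}{n+1},
\]
and recall from Theorem \ref{thm:a1} iii) that inside $\Pi_{j,n}$ the solution $s_{j,n}$ exists and is unique, so it suffices to compute its expansion.

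First I would invoke (or state as a preliminary lemma) uniform-in-$n$ bounds on $\eta_n$ and its derivatives on $\Pi_n(a)$, plus a convergence rate of the form $\eta_n^{(k)}(s) - \eta^{(k)}(s) = o(n^{-\alpha+\varepsilon})$ for $k \le [\alpha]-1$; these follow from the smoothness results of Section~4 and the Wiener-Hopf factorization of $\hat b_n$ discussed in Section~5, combined with the estimate (\ref{anEstim1}) that transfers regularity from $a$ to $a_n$. Once these uniform estimates are in hand, all Taylor expansions below are justified with remainders controlled independently of $j$.

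Next I would iterate. Setting $s_{j,n}^{(0)} := d_{j,n}$ and $s_{j,n}^{(k)} := d_{j,n} - \eta_n(s_{j,n}^{(k-1)})/(n+1) + \Delta_n(s_{j,n}^{(k-1)})/(n+1)$, the first iterate yields $s_{j,n} = d_{j,n} - \eta_n(d_{j,n})/(n+1) + O(h^2) = d_{j,n} - \eta(d_{j,n})h + o(h)$, identifying $p_1(s) = -\eta(s)$. Substituting back and Taylor expanding,
\[
\eta_n(s_{j,n}) = \eta_n(d_{j,n}) + \eta_n'(d_{j,n})\bigl(s_{j,n}-d_{j,n}\bigr) + O(h^2),
\]
and plugging $s_{j,n} - d_{j,n} = -\eta(d_{j,n})h + O(h^2)$ on the right gives, after dividing by $n+1$, the second-order coefficient $p_2(s) = \eta(s)\eta'(s)$. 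Continuing in the same way, the $k$-th iterate generates $p_k$ as an explicit polynomial in $\eta, \eta', \ldots, \eta^{(k-1)}$; the process terminates at order $[\alpha]-1$ because beyond that the available smoothness of $\eta$ no longer controls the Taylor remainder.

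The main obstacle is the uniformity of all estimates in $j$. Near the endpoints $0$ and $\pi$ of the interval $[cn^{-1},\pi-cn^{-1}]$ containing the $d_{j,n}$, the function $\eta$ and its derivatives degenerate because $g'$ vanishes there (see (\ref{gCond1conseq})), so each $\eta^{(k)}(d_{j,n})$ may grow with $n$ as $d_{j,n}\to 0^+$ or $\pi^-$. I would track these growths explicitly and verify that the geometric shrinkage of successive correction terms by a factor $h=1/(n+1)$ dominates, so the total remainder is $o(1/n)$ when $\alpha=2$ and $O(1/n^{\alpha-1})$ when $\alpha>2$, as stated. A secondary care point is balancing the contribution $\Delta_n(s)/(n+1)$ of the a priori residual against the Taylor truncation error at each step; this amounts to routine but meticulous bookkeeping once the uniform regularity of $\eta_n$ and the rate of $\eta_n\to\eta$ have been established.
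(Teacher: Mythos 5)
Your fixed-point iteration plus Taylor expansion is essentially the paper's own proof: the paper sets up the contractive iterates $s_{j,n}^{*(k)}$ in Theorem~\ref{thm:solutions_5_3} and then Taylor-expands $\eta_n$ around $d_{j,n}$ in Section~7.1, exactly as you propose.

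One misconception is worth correcting, because as stated it would send you down a path of unnecessary (and in fact impossible) bookkeeping. You claim that near $s_0=0$ and $s_0=\pi$ the function $\eta$ and its derivatives degenerate because $g'$ vanishes there, and that you would need to track this growth and check it is beaten by the factor $h=1/(n+1)$. In fact Lemma~\ref{lm:eta_props3} places $\eta$ in $\mathcal{C}^{[\alpha-2]+\mu}([0,\pi])$ on the \emph{closed} interval, so $\eta$ together with all the derivatives used in the expansion is uniformly bounded on $[0,\pi]$; moreover $\eta_n(0)=\eta_n(\pi)=0$ by (\ref{eq:eta_n_0_pi}). The apparent singularity of $\hat b(t,\lambda)$ as $s_0\to 0,\pi$ cancels: the double zero of the denominator $(t-e^{\mathrm{i}s_0})(t^{-1}-e^{\mathrm{i}s_0})$ at $t=\pm 1$ is matched by a double zero of the numerator $a(t)-g(s_0)$, which exists \emph{precisely because} $g'(0)=g'(\pi)=0$ (see (\ref{gCond1conseq})). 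The vanishing of $g'$ at the endpoints does matter later, but in Theorem~\ref{thm:lj1}, where it produces the factor $d_{j,n}(\pi-d_{j,n})$ in the remainder for $\lambda_j^{(n)}$; in Theorem~\ref{thm:sjn1} the remainder is uniform in $j$ with no such factor, and the estimate comes directly from the uniform modulus-of-continuity bound on $\eta_n$ (for $2\le\alpha<3$) and from the iteration estimate of Theorem~\ref{thm:solutions_5_3} combined with $\Delta_n=o(1/n^{\alpha-2})$ (for $\alpha\ge 3$).
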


\begin{thm}
\label{thm:lj1}
Under the conditions of Theorem \ref{thm:a1}
\begin{equation} \label{eq:lambda1}
    \lambda_j^{(n)} = g(d_{j,n}) + \sum_{k=1}^{[\alpha]-1}
    \frac{r_k(d_{j,n})}{(n+1)^k} + \Delta_3^{(n)}(j)
\end{equation}
where
\[
\Delta_3^{(n)}(j) =
\begin{cases}
o\left( \frac{d_{j,n}(\pi-d_{j,n})}{n} \right), \qquad \alpha = 2, \\
O\left( \frac{d_{j,n}(\pi-d_{j,n})}{n^{\alpha-1}} \right), \qquad \alpha > 2.
\end{cases}
\]
as $n \to \infty$ uniformly in $j$.
The coefficients $r_k$ can be calculated explicitly; in particular
\[
r_1(\varphi) = -g'(\varphi)\eta(\varphi) \quad \text{and} \quad
r_2(\varphi) = \frac12 g''(\varphi)\eta^2(\varphi) + g'(\varphi) \eta(\varphi) \eta'(\varphi).
\]

\end{thm}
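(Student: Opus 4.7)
The plan is to combine the two previous theorems: Theorem~\ref{thm:a1} identifies $\lambda_j^{(n)} = g_n(s_{j,n})$, while Theorem~\ref{thm:sjn1} gives an explicit asymptotic expansion of $\delta_{j,n} := s_{j,n} - d_{j,n}$. The task then reduces to Taylor-expanding $g_n$ at $d_{j,n}$ and regrouping by powers of $1/(n+1)$.

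First I would split
\begin{equation*}
\lambda_j^{(n)} - g(d_{j,n}) = \bigl[g_n(s_{j,n}) - g(s_{j,n})\bigr] + \bigl[g(s_{j,n}) - g(d_{j,n})\bigr].
\end{equation*}
The first bracket is controlled by (\ref{anEstim1}), which gives $o(1/n^\alpha)$; this real-line estimate extends to $s_{j,n} \in \Pi_n(a)$ because $a_n - a$ is the Laurent tail of an element of $W^\alpha$ and $\Pi_n(a)$ has imaginary width $O(1/n)$. For the second bracket I would apply Taylor's formula of order $[\alpha]-1$ at $d_{j,n}$, substitute $\delta_{j,n} = \sum_{k=1}^{[\alpha]-1} p_k(d_{j,n})/(n+1)^k + \Delta_2^{(n)}(j)$ from Theorem~\ref{thm:sjn1}, expand $\delta_{j,n}^m$ by the multinomial theorem, and collect the powers of $1/(n+1)$. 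This yields each $r_k$ as an explicit polynomial in $g'(d_{j,n}),\ldots,g^{(k)}(d_{j,n})$ and $p_1(d_{j,n}),\ldots,p_k(d_{j,n})$. Concretely: for $k=1$ only $m=1$ contributes and $r_1 = g'\,p_1 = -g'\eta$; for $k=2$ one collects the $(m{=}1,k{=}2)$ piece $g'\,p_2$ and the $(m{=}2,k{=}1)$ piece $\tfrac12 g''\,p_1^2$, producing $r_2 = g'\eta\eta' + \tfrac12 g''\eta^2$, as stated.

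The main obstacle is obtaining the sharper endpoint factor $d_{j,n}(\pi - d_{j,n})$ in $\Delta_3^{(n)}(j)$ uniformly in $j$. This arises because, by (\ref{gCond1conseq}), $g'(0)=g'(\pi)=0$, so $|g'(\varphi)| \le C\min(\varphi,\pi - \varphi)$; together with the analogous vanishing of $\eta$ at the endpoints (which should follow from the factorization-based analysis of $\hat b$ developed in the earlier sections for the proof of Theorem~\ref{thm:sjn1}), every term of $\Delta_3^{(n)}(j)$ that carries a factor $g^{(1)}$ or a factor from $p_k$ inherits a factor of $\min(d_{j,n},\pi-d_{j,n})$. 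Since $\min(\varphi,\pi-\varphi)\asymp \varphi(\pi-\varphi)$ on $[0,\pi]$, this is exactly the claimed smallness. The pure Taylor remainder $O(|\delta_{j,n}|^{[\alpha]})$ and the squares of $\eta/(n+1)$ that appear in $\delta_{j,n}^m$ are easily checked to be of strictly lower order than $d_{j,n}(\pi-d_{j,n})/n^{\alpha-1}$, uniformly in $j \in \{1,\ldots,n\}$. Finally, the split between $o(\cdot)$ and $O(\cdot)$ in the two ranges of $\alpha$ tracks directly the corresponding dichotomy for $\Delta_2^{(n)}(j)$ in Theorem~\ref{thm:sjn1}.
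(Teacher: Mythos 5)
Your overall plan—Taylor-expand around $d_{j,n}$, substitute the expansion of $s_{j,n} - d_{j,n}$ from Theorem~\ref{thm:sjn1}, regroup by powers of $1/(n{+}1)$—is exactly the paper's strategy, and your derivations of $r_1,r_2$ and the endpoint-factor heuristic ($g'(0)=g'(\pi)=0$, $\eta(0)=\eta(\pi)=0$, as in (\ref{gCond1conseq}) and (\ref{eq:eta_n_0_pi})) are correct. However, the opening decomposition
\[
\lambda_j^{(n)} - g(d_{j,n}) = \bigl[g_n(s_{j,n}) - g(s_{j,n})\bigr] + \bigl[g(s_{j,n}) - g(d_{j,n})\bigr]
\]
has a real gap. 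The function $g(\varphi)=a(e^{\mathrm{i}\varphi})$ is only $C^m$ on the real line; it is not analytic and has no canonical extension to the complex strip $\Pi_n(a)$ when $a\in W^\alpha$ has only polynomial Fourier decay. The eigenparameter $s_{j,n}$ is generically strictly complex (the matrices are non-Hermitian), so $g(s_{j,n})$ is not defined. The same problem undermines the claim that (\ref{anEstim1}) ``extends'' to $\Pi_n(a)$: for $t=re^{\mathrm{i}\varphi}$ with $|\log r|\sim C/n$, the tail $\sum_{|j|\ge n}a_j t^j$ has factors $r^j$ that grow without bound as $|j|\to\infty$, and the hypothesis $\sum |a_j|(1+|j|)^\alpha<\infty$ gives no exponential control, so the series need not even converge off $\mathbb{T}$.

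The fix—which is precisely what the paper does—is to Taylor-expand $g_n$, not $g$, and decompose as
\[
\lambda_j^{(n)} - g(d_{j,n}) = \bigl[g_n(s_{j,n}) - g_n(d_{j,n})\bigr] + \bigl[g_n(d_{j,n}) - g(d_{j,n})\bigr].
\]
Here $g_n$ is a Laurent polynomial, hence entire on $\mathbb{C}\setminus\{0\}$, so the Taylor expansion at the real point $d_{j,n}$ with complex increment $s_{j,n}-d_{j,n}$ is legitimate, and Lemma~\ref{lm:norm_alpha_n_k} controls $g_n$ and its derivatives uniformly on $\Pi_n(a)$. The second bracket is evaluated at a real point, where Lemma~\ref{lm:norm_alpha_n_k} gives $g_n^{(k)}(d_{j,n}) = g^{(k)}(d_{j,n}) + o(n^{-(\alpha-k)})$; this is also how the coefficients $g_n^{(k)}$ in the expansion are converted to the $g^{(k)}$ appearing in $r_k$. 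With that replacement your remaining steps—multinomial regrouping, the endpoint factor from the vanishing of $g'$ and $\eta$ at $0$ and $\pi$, and the $o$/$O$ dichotomy inherited from $\Delta_2^{(n)}(j)$—go through essentially unchanged.
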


\section{Numerical example and the consequences of the main results}

Define a symbol by the function $g_1(\varphi)$ ($=a_1(e^{\mathrm{i} \varphi})$):
\[
g_1(\varphi)=c_{1}\sin(c_{0}\varphi^{2}) + c_2 ((1+\varphi)^{5/2}+(1-\varphi)^{5/2}),\qquad \varphi\in[-\pi;\pi],
\]
(it is more convenient for us to consider the symbol in this section on the segment $[-\pi;\pi]$)
where:
\[
c_0=\frac{1}{5}-\frac{1}{6}\mathrm{i},\qquad  c_1=\frac{(1-\pi)^{3/2}-(\pi+1)^{3/2}}{16\pi c_{0}\cos(\pi^{2}c_{0})},
\qquad c_2 = \frac{1}{20}.
\]
The expression for the constants $c_{1,2}$ are derived from the conditions
$g'(-\pi) = g'(\pi)$.
(The equalities $g(-\pi)=g(\pi)$ and $g''(-\pi) = g''(\pi)$ are a consequence of the symmetry of function $g(\varphi)$.)
It can be verified that the constructed function satisfies the conditions (1) and (2) at the begining of Section 2.
(The condition $g'(\varphi) \ne 0$ for $\varphi \in (-\pi;\pi)$ can be verified numerically.)

We can see that the third derivative of the symbol $a_1(t)$ has singularities at the points
$t = e^{\mathrm{i}}$ and $t = e^{-\mathrm{i}}$.
It is easy to see that $a_1(t) \in \mathsf{CSL}^{2.5 - \delta}$
for arbitrary small $\delta > 0$.

The image of the symbol $a_1(t)$ and the eigenvalues of the matrices $T_n(a_1)$ for
$n=20$ and $n=80$ are shown in the Figure~\ref{fig:a1_n20}
and Figure~\ref{fig:a1_n80} correspondingly.
If we look at these Figures then we can make the following observations:
\begin{enumerate}
    \item The limit set of the eigenvalues for $T_n(a_1)$ if $n \to \infty$ really coincide with 
    $\mathcal{R}(a_1)$.

    \item The points of concentration for the eigenvalues of $T_n(a_1)$ are $z_1 = a_1(1)$
    and $z_2 = a_1(-1)$. The distance between consecutive eigenvalues in neighborhoods
    of $z_1$ and $z_2$ is much less than outside of these neighborhoods.
    
\item Some eigenvalues are located under the curve $\mathcal{R}(a_1)$ and others above the curve.
\end{enumerate}

\begin{figure}
\begin{centering}
\includegraphics[scale=0.6]{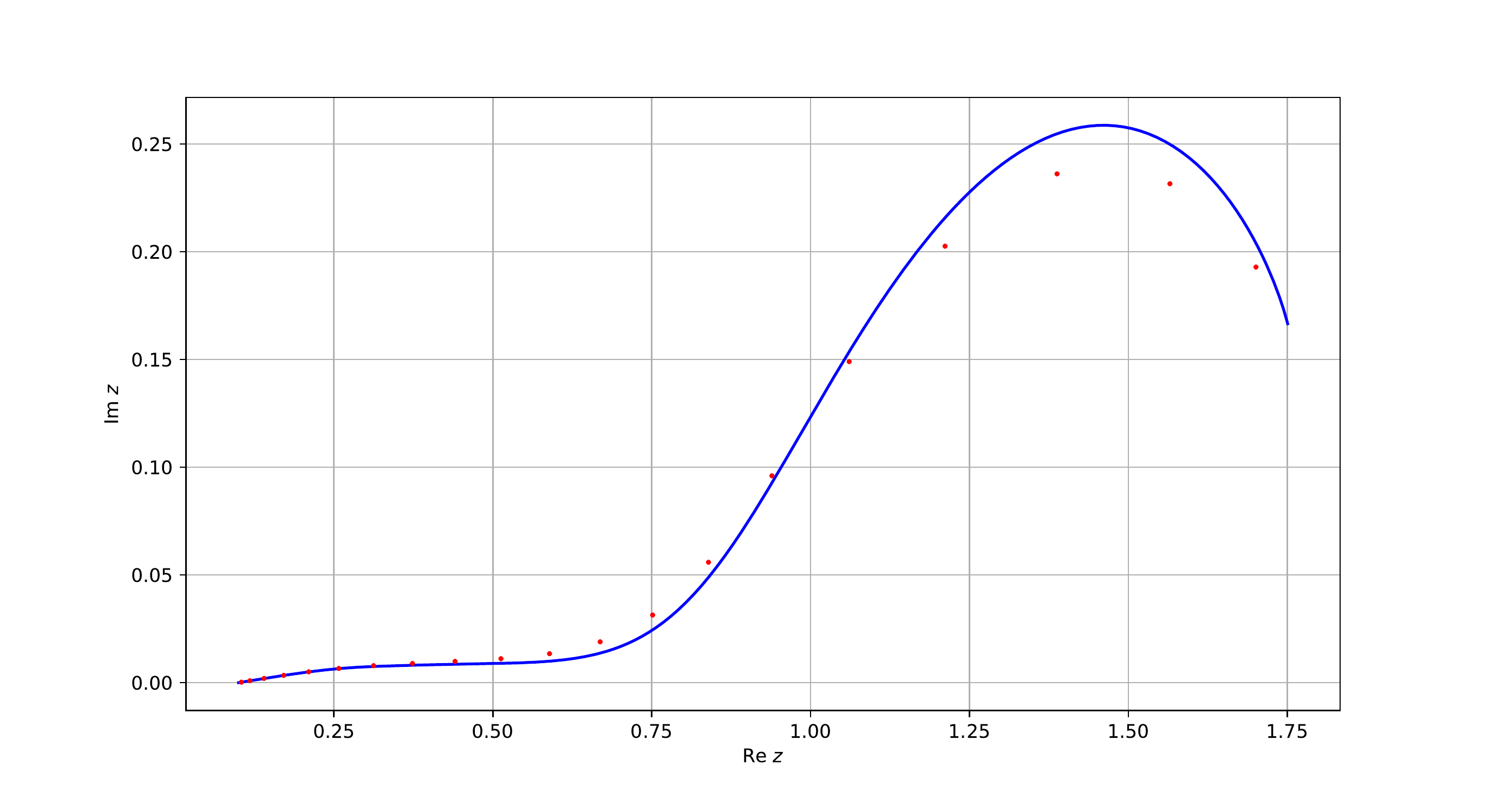}
\par\end{centering}
\caption{\label{fig:a1_n20}Image of the symbol $a_1(t)$ and eigenvalues 
of the matrix $T_{20}(a_1)$}
\end{figure}

\begin{figure}
\begin{centering}
\includegraphics[scale=0.6]{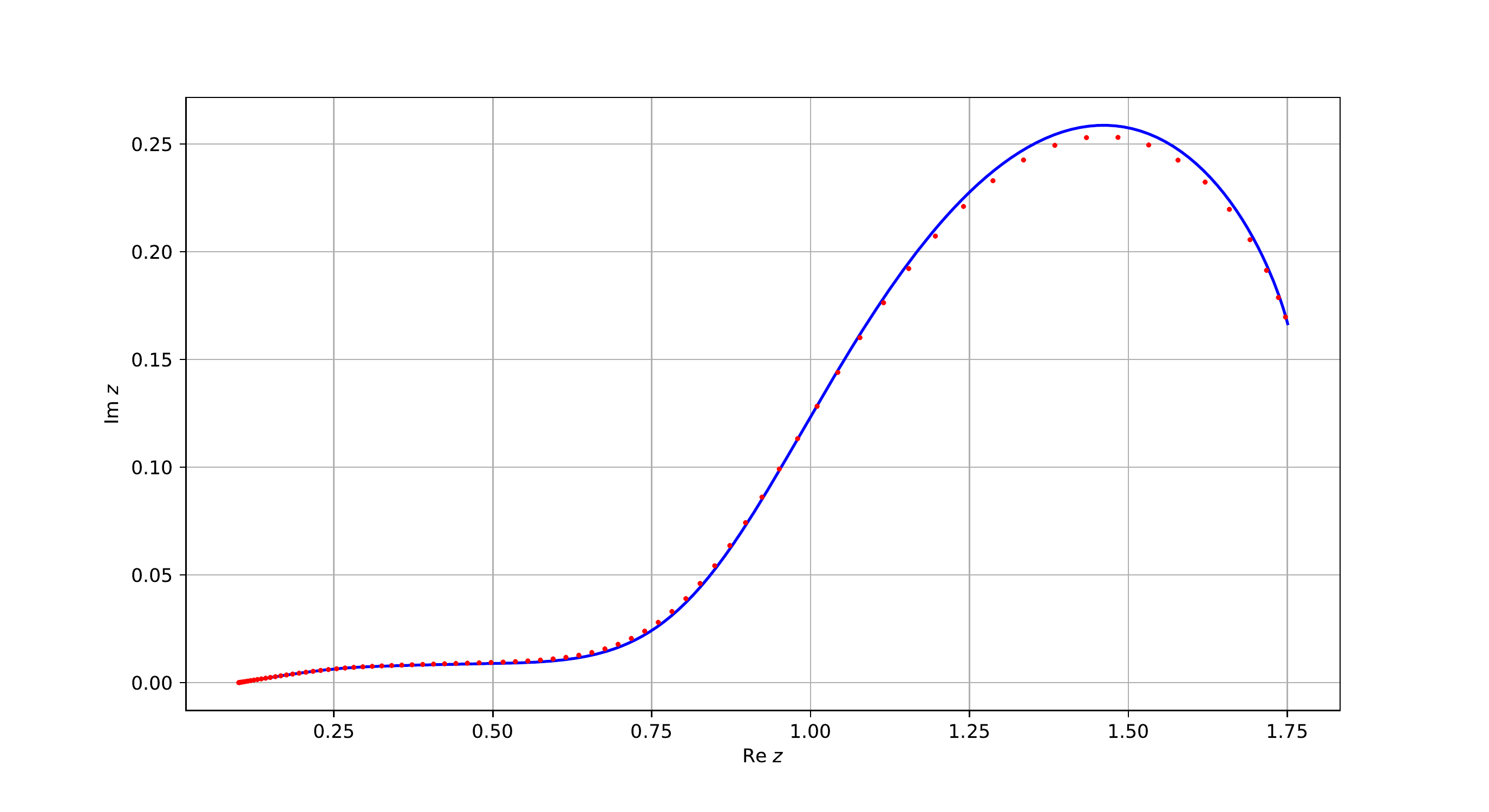}
\par\end{centering}
\caption{\label{fig:a1_n80}Image of the symbol $a_1(t)$ and eigenvalues 
of the matrix $T_{80}(a_1)$}
\end{figure}

We are going to show that Theorem~\ref{thm:lj1} allows to explain and clarify these observations.

Designate the spectrum of $T_n(a)$ by 
$\mathrm{Sp}(T_n(a)) := \left\{ \lambda_{j}^{(n)} \right\}_{j=1}^{n}$.
Then the limit set of the eigenvalues of the sequence $\left\{ T_n(a) \right\}_{n=1}^\infty$
is the following:
\[
\Lambda(a) := \limsup_{n \to \infty} \mathrm{Sp}(T_n(a)).
\]
The next Lemma is a direct consequence of the Theorem~\ref{thm:lj1}.

\begin{lm}  \label{lm:lim_set}
Under the conditions of Theorem~\ref{thm:a1}
\[
\Lambda(a) = \mathcal{R}(a).
\]
In addition
\[
\sup_{1\le j \le n} \rho\left( \lambda_j^{(n)}, \mathcal{R}(a) \right) \le \frac{\mathrm{const}}{n} .
\]
(Here $\rho(z, \mathcal{R}(a))$ is the distance between the point $z$ and the curve $\mathcal{R}(a)$ in the complex plane).
\end{lm}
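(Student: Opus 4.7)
The plan is to derive both assertions directly from the expansion in Theorem~\ref{thm:lj1}, which gives
\[
\lambda_j^{(n)} = g(d_{j,n}) + \sum_{k=1}^{[\alpha]-1} \frac{r_k(d_{j,n})}{(n+1)^k} + \Delta_3^{(n)}(j),
\qquad j=1,\dots,n.
\]
Since $d_{j,n} = \pi j/(n+1) \in (0,\pi)$, each point $g(d_{j,n})$ lies in $\mathcal{R}(a)$, so
$\rho(\lambda_j^{(n)}, \mathcal{R}(a)) \le |\lambda_j^{(n)} - g(d_{j,n})|$, and it suffices to control the right-hand side of the expansion uniformly in $j$.

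For the distance estimate I would first observe that each coefficient $r_k$ is bounded on $[0,\pi]$. The potentially delicate point is the behavior near the endpoints, where $\eta$ could grow; however, the explicit formulas
$r_1 = -g'\eta$ and $r_2 = \tfrac12 g''\eta^2 + g'\eta\eta'$ together with $g'(0) = g'(\pi) = 0$ (see~(\ref{gCond1conseq})) already suggest cancellation at the endpoints, and more generally the same mechanism that produces the factor $d_{j,n}(\pi-d_{j,n})$ in the remainder $\Delta_3^{(n)}$ controls the $r_k$. Hence $|r_k(d_{j,n})|/(n+1)^k = O(1/n)$ uniformly in $j$. Combined with $d_{j,n}(\pi-d_{j,n}) \le \pi^2/4$, which yields $|\Delta_3^{(n)}(j)| = O(1/n)$, this produces the bound $\rho(\lambda_j^{(n)}, \mathcal{R}(a)) \le \mathrm{const}/n$, proving the second assertion of the lemma.

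The identity $\Lambda(a) = \mathcal{R}(a)$ then follows from two opposite inclusions. For the inclusion $\Lambda(a) \subset \overline{\mathcal{R}(a)}$, any limit point $\lambda^*$ of a sequence $\lambda_{j(n_k)}^{(n_k)}$ has $\rho(\lambda^*, \mathcal{R}(a)) = 0$ by the $O(1/n)$ bound just established, so $\lambda^*$ belongs to the closure of the curve. For the reverse inclusion $\mathcal{R}(a) \subset \Lambda(a)$, given $\lambda^* = g(\varphi^*)$ with $\varphi^* \in (0,\pi)$, I would choose $j(n)$ so that $d_{j(n),n}$ is the nearest grid point to $\varphi^*$; then $|d_{j(n),n} - \varphi^*| \le \pi/(n+1)$, so by continuity $g(d_{j(n),n}) \to \lambda^*$, and combining this with the distance bound gives $\lambda_{j(n)}^{(n)} \to \lambda^*$, hence $\lambda^* \in \Lambda(a)$. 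The only substantive point — and the only thing resembling an obstacle — is the uniform boundedness of the $r_k$ near the endpoints of $[0,\pi]$, which is handled by the same endpoint-vanishing of $g'$ that appears in the rest of the paper.
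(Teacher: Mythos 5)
Your argument is correct and is exactly the reasoning the paper has in mind when it states that Lemma~\ref{lm:lim_set} is ``a direct consequence of Theorem~\ref{thm:lj1}'': you bound $|\lambda_j^{(n)} - g(d_{j,n})|$ uniformly using the expansion (\ref{eq:lambda1}), which gives the $O(1/n)$ distance estimate, and then obtain both inclusions for $\Lambda(a)$ from this bound plus the density of the grid $\{d_{j,n}\}$ in $(0,\pi)$. One small correction to your framing: there is no ``potentially delicate point'' near the endpoints and no cancellation from $g'(0)=g'(\pi)=0$ is required. Lemma~\ref{lm:eta_props3} already gives $\eta\in\mathcal{C}^{m+\mu}([0,\pi])$, hence $\eta$ (and, for $\alpha\ge 3$, its lower-order derivatives) are continuous and bounded on the closed interval, and the $g^{(k)}$ for $k\le[\alpha]$ are bounded since $a\in W^\alpha$; so each $r_k$ is a product of uniformly bounded functions and $|r_k(d_{j,n})|/(n+1)^k=O(1/n)$ follows immediately. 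The endpoint vanishing of $g'$ is what makes $r_1$ (and the remainder) \emph{small} near $j=1$ and $j=n$, as used in Lemma~\ref{lm:lambda_asymp_1}, but it plays no role in establishing mere boundedness here.
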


Consider now the observation 2. The situation in the neighborhoods of the points $z_1$ and $z_2$
clarifies the following statement.

\begin{lm}  \label{lm:lambda_asymp_1}
Let the conditions of the Theorem~\ref{thm:a1} be fulfilled. Then:
\begin{enumerate}[i)]
    \item if $\frac{j}{n+1} \to 0$, then the following
    asymptotic formula is true:
    \[
    \lambda_j^{(n)} = g(0) + \frac{\pi^2 g''(0)}{2} \frac{j^2}{(n+1)^2} + \Delta_4^{(n)}(j),
    \]
    where
    \begin{equation} \label{eq:delta_4}
        \Delta_4^{(n)}(j) = o\left( \sfrac{j^2}{n^2} \right) .
    \end{equation}

    \item if $\frac{n+1-j}{n+1} \to 0$, then the following
    asymptotic formula is true:
    \[
    \lambda_j^{(n)} = g(\pi) + \frac{\pi^2 g''(\pi)}{2} \frac{(n+1-j)^2}{(n+1)^2} + \Delta_5^{(n)}(j),
    \]
    where
    \begin{equation} \label{eq:delta_5}
        \Delta_5^{(n)}(j) = o\left( \frac{(n+1-j)^2}{n^2} \right) .
    \end{equation}
\end{enumerate}
\end{lm}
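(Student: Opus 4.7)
The plan is to deduce both parts of the lemma directly from Theorem~\ref{thm:lj1} by Taylor-expanding its right-hand side at the relevant endpoint of the arc $\mathcal{R}(a)$, exploiting that the symmetry (\ref{cond1eq}) forces $g'$ to vanish there via (\ref{gCond1conseq}). I spell out (i); part (ii) is obtained by an identical argument around $\varphi=\pi$, using $g'(\pi)=0$, $g''(\pi)\ne 0$, and the substitution $d_{j,n}-\pi=-\pi(n+1-j)/(n+1)$ in the Taylor variable.

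For (i), fix a sequence $j=j(n)$ with $j/(n+1)\to 0$, so that $d_{j,n}=\pi j/(n+1)\to 0$, and substitute $\varphi=d_{j,n}$ in Theorem~\ref{thm:lj1}. Since $a\in\mathsf{CSL}^{\alpha}$ with $\alpha\ge 2$, the function $g$ is $C^2$ near $0$, and Taylor's formula together with $g'(0)=0$ gives
\[
g(d_{j,n}) = g(0) + \tfrac12 g''(0)\, d_{j,n}^{\,2} + o(d_{j,n}^{\,2}) = g(0) + \frac{\pi^2 g''(0)}{2}\,\frac{j^2}{(n+1)^2} + o\!\left(\frac{j^2}{n^2}\right),
\]
which produces exactly the main contribution announced in (i).

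It remains to show that every other term in the expansion of Theorem~\ref{thm:lj1} is $o(j^2/n^2)$. For $r_1(\varphi) = -g'(\varphi)\eta(\varphi)$, $g'(0)=0$ gives $r_1(0)=0$, and, using the local smoothness of $\eta$ established in Section~4, $r_1(d_{j,n}) = O(d_{j,n})$, so $r_1(d_{j,n})/(n+1) = O(j/n^{2})$. An analogous endpoint-vanishing argument handles the higher coefficients $r_k$, each of which contains a factor vanishing at $0$, giving $r_k(d_{j,n})/(n+1)^k = O(j/n^{k+1})$. The uniform remainder of Theorem~\ref{thm:lj1} satisfies
\[
\Delta_3^{(n)}(j) = \begin{cases} o\!\left(d_{j,n}(\pi-d_{j,n})/n\right) = o(j/n^2), & \alpha=2,\\ O\!\left(d_{j,n}(\pi-d_{j,n})/n^{\alpha-1}\right) = O(j/n^{\alpha}), & \alpha>2, \end{cases}
\]
since $\pi-d_{j,n}\to\pi$. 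All these contributions are $o(j^2/n^2)$ under $j/(n+1)\to 0$, which yields (\ref{eq:delta_4}).

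The main obstacle is the uniform estimate $r_1(d_{j,n})/(n+1) = o(j^2/n^2)$: the first-order vanishing $r_1(0)=0$ alone only yields $O(j/n^2)$, and strengthening this to $o(j^2/n^2)$ requires either the regime $j\to\infty$, so that $j/n^2 = o(j^2/n^2)$, or a sharper vanishing of $\eta$ at the endpoint drawn from the regularity results of Section~4. All remaining steps are routine Taylor bookkeeping, and part (ii) follows by a word-for-word transcription at $\varphi=\pi$.
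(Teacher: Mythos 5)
Your overall plan coincides with the paper's proof: substitute $\varphi = d_{j,n}$ into Theorem~\ref{thm:lj1}, Taylor-expand $g$ at $0$ using $g'(0)=0$, and show the remaining terms are negligible. The gap you flag at the end is real, and your argument as written does not close it. Using only $g'(0)=0$ (so $g'(d_{j,n}) = O(d_{j,n})$) together with mere boundedness of $\eta$ gives $r_1(d_{j,n}) = O(d_{j,n})$, hence $r_1(d_{j,n})/(n+1) = O(j/n^2)$. For bounded $j$ this is $O(1/n^2)$, which is \emph{not} $o(j^2/n^2)=o(1/n^2)$, so the lemma would fail precisely in the regime where $d_{j,n}\to 0$ fastest; the escape route ``$j\to\infty$'' is not available because the hypothesis only assumes $j/(n+1)\to 0$.

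The missing ingredient is exactly the ``sharper vanishing of $\eta$'' you suspected but did not verify. Equation (\ref{eq:eta_n_0_pi}) records $\eta_n(0)=\eta_n(\pi)=0$, and by Lemma~\ref{lm:eta_props3} (which gives $\eta$ continuous and $\eta_n\to\eta$) the limit function satisfies $\eta(0)=\eta(\pi)=0$ as well. Hence $\eta(d_{j,n})=o(1)$ as $d_{j,n}\to 0$, and
\[
\frac{\bigl|g'(d_{j,n})\,\eta(d_{j,n})\bigr|}{n+1}
= \frac{O(d_{j,n})\cdot o(1)}{n+1}
= o\!\left(\frac{j}{n^2}\right),
\]
which \emph{is} $o(j^2/n^2)$ because $j\ge 1$. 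This is exactly the step the paper takes, pairing $g'(d_{j,n})=O(d_{j,n})$ with $\eta(d_{j,n})=O(d_{j,n})$. Once you insert $\eta(0)=0$, your proof is complete; part (ii) is obtained identically at the endpoint $\pi$, using $\eta(\pi)=0$.
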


\begin{proof}
Since $a(t) \in W^2$ then, according to (\ref{eq:lambda1}), we have:
\[
    \lambda_j^{(n)} = g(d_{j,n}) - \frac{g'(d_{j,n}) \eta(d_{j,n})}{n+1}
    + o\left( \frac{d_{j,n}(\pi - d_{j,n})}{n} \right) .
\]
Consider i). We have in this case that
\[
o\left( \frac{d_{j,n}(\pi - d_{j,n})}{n} \right) = o\left( \frac{j}{n^2} \right).
\]
Taking into account that
\[
g'(d_{j,n}) = g(0) + \frac{g''(0)}{2}(d_{j,n})^2 + o(d_{j,n})^2 ,
\]
$g'(d_{j,n}) = O(d_{j,n})$, and $\eta(d_{j,n}) = O(d_{j,n})$,
one has
\[
\frac{|g'(d_{j,n}) \eta(d_{j,n})|}{n+1} = o\left( \frac{j^2}{n^3} \right) .
\]
Thus we get i).
Case ii) is proved analogously.
\end{proof}

Thus Lemma~\ref{lm:lambda_asymp_1} shows that the distance between
consecutive eigenvalues located in a neighborhood of the point $z_1 = g(0)$
$(j / n  \to 0)$ is of $o(j/n^2)$.
So if $j$ is bounded (``the first eigenvalues'') then this distance is of
\begin{equation} \label{eq:lambda_first_dist}
o\left( \sfrac{1}{n^2} \right).
\end{equation}

The same result is true for neighborhoods of the point $z_2 = g(\pi)$
(``the last eigenvalues'').
On the other hand for $\varepsilon < d_{j,n} < \pi - \varepsilon$,
where $\varepsilon > 0$ is fixed small enough (inner eigenvalues)
Theorem \ref{thm:lj1} give that
\begin{equation} \label{eq:lambda_dist}
| \lambda_{j+1}^{(n)} - \lambda_{j}^{(n)} | = o\left( \sfrac{1}{n} \right).
\end{equation}

Thus (\ref{eq:lambda_first_dist}) and (\ref{eq:lambda_dist}) confirm
and explain of the observation 2 above.

Pass to observation 3. Consider ``inner eigenvalues'' that is 
$\varepsilon < d_{j,n} < \pi - \varepsilon$.
Suppose that $a(t) \in W^3$
the formula (\ref{eq:lambda1}) give to us that
\[
\lambda_j^{(n)} = g(d_{j,n}) - \frac{g'(d_{j,n}) \eta(d_{j,n})}{n + 1}
+ O\left( \sfrac{1}{n^2} \right) .
\]
Let $\tilde{e}_{j,n} := d_{j,n} - \dfrac{\Re \eta(d_{j,n})}{n+1}$, then
\begin{equation}
\label{eq:lambda_e_j_n}
\lambda_j^{(n)} = g(\tilde{e}_{j,n}) - \mathrm{i} \frac{g'(\tilde{e}_{j,n}) \Im \eta(d_{j,n})}{n + 1}
+ O\left( \sfrac{1}{n^2} \right) .
\end{equation}

Thus $\lambda_j^{(n)}$ is located on the normal to the curve $\mathcal{R}(a)$ at the point 
$z = g(\tilde{e}_{j,n})$ with exactitude $O\left( 1 / n^2 \right)$.

In addition we can see that the point $\lambda_j^{(n)}$ is located ``above'' or ``below''
of the curve $\mathcal{R}(a)$ depending the sign of the real number $\Im \eta(d_{j,n})$.
Moreover, formula (\ref{eq:lambda_e_j_n}) give us that
$\dfrac{|g'(\tilde{e}_{j,n}) \Im \eta(d_{j,n})|}{n + 1}
+ O\left( 1/ n^2 \right)$ is the distance between $\lambda_j^{(n)}$ and $\mathcal{R}(a)$.

It should be noted that Theorem~\ref{thm:lj1} has not only a qualitative sense but also a qualitative (numerical) one.
If one has numerical values for the function $\eta(d_{j,n})$ we can calculate all eigenvalues 
$\lambda_{j}^{(n)}$, $j=1, 2, \ldots, n$ very rapidly for different values of $n$.
This idea was applied in the article \cite{Ekstrom_2017} (in case of real valued symbols)
where the function $\eta(d_{j,n})$ was calculated with the help of the eigenvalues 
$\lambda_{j}^{(n)}$ for not very large $n$.

In the rest of this section we illustrate the accuracy of asymptotic formulas.
Introduce the notation for approximated eigenvalues from (\ref{eq:lambda1}):
\begin{align*}
    \lambda_{1,j}^{(n)} &= g(d_{j,n}) - \frac{g'(\varphi)\eta(\varphi)}{n+1}, \\
    \lambda_{2,j}^{(n)} &= g(d_{j,n}) - \frac{g'(\varphi)\eta(\varphi)}{n+1}
    +\frac{\frac12 g''(\varphi)\eta^2(\varphi) + g'(\varphi) \eta(\varphi) \eta'(\varphi)}{(n+1)^2}.
\end{align*}

The relative approximation error will be characterized by the following values:
\begin{align*}
\Delta_1^{(n)} &= \max_{j=\overline{1,n}} \left| \frac{\lambda_{1,j}^{(n)}-\lambda_j^{(n)}}{\lambda_j^{(n)}} \right|, \\
\Delta_2^{(n)} &= \max_{j=\overline{1,n}} \left| \frac{\lambda_{2,j}^{(n)}-\lambda_j^{(n)}}{\lambda_j^{(n)}} \right|.
\end{align*}

The resulting accuracy of the spectrum of $T_n(a_1)$ is shown in table \ref{tab:delta_a1}.

\begin{table}[!ht]
  \centering
\begin{tabular}{|c|c|c|c|c|c|}
\hline
$n$ & 20 & 40 & 80 & 160 & 320\tabularnewline
\hline
$\Delta_1^{(n)}$ & 3.2e-03 & 8.8e-04 & 2.3e-04 & 5.9e-05 & 1.5e-05 \tabularnewline
\hline
$\Delta_2^{(n)}$ &  3.9e-04 & 5.6e-05 & 7.2e-06 & 9.2e-07 & 1.2e-07 \tabularnewline
\hline
\end{tabular}
  \caption{Comparative accuracy of the calculation of the spectrum $T_n$ for the symbol $a_1(t)$}
  \label{tab:delta_a1}
\end{table}

We can see for even $n=20$ the accuracy good enough for both approximations.

\section{Preliminary results}

Let the function $f$ be $f(t) = \sum_{j=-\infty}^{\infty} f_j t^j \in L_2(\mathbb{T})$.
We introduce the operator $Q_n$ by the formula
\begin{equation}
    \label{eq:qnf}
    \left( Q_n f \right) := \sum_{j=n}^\infty f_j t^j .
\end{equation}

The following Lemma holds.

\begin{lm} \label{lm:f_w_alpha}
\begin{enumerate}[i)]
    \item If $f \in W^\alpha(\mathbb{T})$, $\alpha \ge 0$, then
    \[
    \sup_{t \in \mathbb{T}} \left| \left( Q_n f \right)(t)\right| \le
    \frac{\|Q_n f \|_\alpha}{n^\alpha}.
    \]
    \item For a natural number  $k \le \alpha$, then
    \[
    f^{(k)}(t) \in W^{\alpha-k}(\mathbb{T})
    \quad \text{and} \quad
    \sup_{t \in \mathbb{T}} \left|\left(Q_n f^{(k)}\right)(t)\right| \le
    \frac{\|Q_n f\|_\alpha}{n^{\alpha-k}} .
    \]
    \item For a real number  $k > \alpha$, then
    \[
    \sum_{j=-(n-1)}^{n-1} |f_j| |j|^k \le 2 n^{k-\alpha} \|f\|_\alpha.
    \]
\end{enumerate}
\end{lm}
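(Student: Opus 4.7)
The plan is to prove each of the three parts by direct estimation from the definition of $\|\cdot\|_\alpha$, deriving (ii) from (i) via termwise differentiation and treating (iii) as an elementary weight comparison; each part should be essentially a two-line calculation.

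For (i), I would start from the crude bound $|(Q_n f)(t)|\le\sum_{j\ge n}|f_j|$, valid since $|t^j|=1$ on $\mathbb{T}$. Because $j\ge n$ forces $(|j|+1)^\alpha\ge n^\alpha$, I multiply and divide by $(|j|+1)^\alpha$ inside the sum, pull out the factor $n^{-\alpha}$, and recognize what remains as $\|Q_n f\|_\alpha$.

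For (ii), I would use that the Fourier coefficients of $f^{(k)}(e^{i\varphi})$ (derivatives in the angular variable, matching the paper's convention for $g$) are $(ij)^k f_j$. Termwise differentiation is legitimate because $\sum_j|j|^k|f_j|\le\sum_j(|j|+1)^\alpha|f_j|=\|f\|_\alpha<\infty$ when $k\le\alpha$. The weighted inequality $|j|^k(|j|+1)^{\alpha-k}\le(|j|+1)^\alpha$ then gives $\|f^{(k)}\|_{\alpha-k}\le\|f\|_\alpha$, so $f^{(k)}\in W^{\alpha-k}$; applying part (i) to $Q_n f^{(k)}$ and using the same weighted inequality restricted to $j\ge n$ bounds $\|Q_n f^{(k)}\|_{\alpha-k}$ by $\|Q_n f\|_\alpha$, completing the estimate. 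It is important here that the final bound involves the \emph{tail} norm $\|Q_n f\|_\alpha$ rather than the full $\|f\|_\alpha$, since that is what makes the $o(1/n^\alpha)$ estimate (\ref{anEstim1}) work.

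For (iii), with $k>\alpha\ge 0$ and $|j|\le n-1$, I split $|j|^k=|j|^{k-\alpha}\cdot|j|^\alpha$ (the $j=0$ term vanishes since $k>\alpha\ge 0$ implies $k>0$), replace $|j|^{k-\alpha}$ by $n^{k-\alpha}$ using $|j|<n$ and $k-\alpha>0$, and absorb $|j|^\alpha\le(|j|+1)^\alpha$ to obtain at most $n^{k-\alpha}\sum_{|j|\le n-1}|f_j|(|j|+1)^\alpha\le n^{k-\alpha}\|f\|_\alpha$, with the constant $2$ in the claimed bound providing ample slack. No serious obstacle is anticipated: the lemma just packages the standard weighted Wiener-algebra estimates that will be invoked repeatedly in the subsequent asymptotic analysis.
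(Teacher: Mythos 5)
Your proposal is correct. The paper only proves part (iii) in the text, citing \cite{BBGM15}, \cite{BGM2017} for parts (i) and (ii); your arguments for (i) and (ii) are exactly the standard Wiener-algebra estimates one would find there, and your proof of (iii) uses the same idea as the paper (split off the weight $(1+|j|)^\alpha$ and bound the remaining factor by a power of $n$), except you bound $|j|^{k-\alpha}$ by $n^{k-\alpha}$ directly and thus get the constant $1$ rather than the paper's slightly looser route through $(1+n)^{k-\alpha} \le 2n^{k-\alpha}$.
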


\begin{proof}
Items  i) and ii) were proved in \cite{BBGM15}, \cite{BGM2017}. Let us prove iii):
\begin{align*}
\sum_{j=-(n-1)}^{n-1} |f_j| |j|^k & = \sum_{j=-(n-1)}^{n-1}
\left( |f_j| (1 + |j|)^\alpha \right) \frac{|j|^k}{(1 + |j|)^\alpha}\\
& \le (1+n)^{k-\alpha} \sum_{j=-n}^{n} |f_j| (1 + |j|)^\alpha
\le 2 n^{k-\alpha} \|f\|_\alpha .
\end{align*}
\end{proof}

Let $a \in W^\alpha$. Consider the functions
\[
g(\varphi) = a(e^{\mathrm{i}\varphi}) = \sum_{j=-\infty}^{\infty} a_j e^{\mathrm{i}j\varphi}
\quad \text{and} \quad
g_n(\varphi) = a_n(e^{\mathrm{i}j\varphi}) = \sum_{j=-(n-1)}^{(n-1)} a_j e^{\mathrm{i}j\varphi}.
\]

The following Lemma gives an asymptotic representation when $n \to \infty $ of the function $g_n(\varphi)$  in the complex domain $\Pi_n(a)$ using the function $g(\varphi)$ (and its derivatives) defined on $[0,\pi]$.

\begin{lm} \label{lm:norm_alpha_n_k}
Let the point $\psi = \varphi + \mathrm{i} \delta \in \Pi_n(a)$.
If  $a(t) \in W^\alpha$, $\alpha \ge 0$,  $m = [\alpha]$ then
\[
g_n(\psi) = g(\varphi) + \sum_{k=1} ^{m} g^{(k)}(\varphi) (\mathrm{i} \delta)^k
+ \sum_{k=0} ^{m+1} \alpha_{n,k}(\varphi) (\mathrm{i} \delta)^k,
\]
where $\alpha_{n,k}(\varphi) \in W^0$,
\[
\|\alpha_{n,k}\|_0 = o(n^{k-\alpha}), \qquad k=0, 1, \ldots, m
\]
and
\[
\|\alpha_{n,m+1}\|_0 = O(n^{m+1-\alpha}).
\]
\end{lm}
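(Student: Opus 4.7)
The plan is to substitute $\psi=\varphi+\mathrm{i}\delta$ directly into
\[
g_n(\psi)=\sum_{|j|\le n-1} a_j\, e^{\mathrm{i} j\varphi}\, e^{-j\delta},
\]
expand each scalar factor $e^{-j\delta}$ as its absolutely convergent power series in $j\delta$, and swap the two summations. On $\Pi_n(a)$ one has $|\delta|\le C/n$, hence $|j\delta|\le C$ for $|j|\le n-1$; this makes the interchange legitimate (via Fubini for the absolutely convergent sum $\sum_j|a_j|e^{|j\delta|}\le e^{C}\|a\|_0$) and produces
\[
g_n(\psi)=\sum_{k=0}^{\infty}\frac{(\mathrm{i}\delta)^k}{k!}\,S_n^{(k)}(\varphi),\qquad
S_n^{(k)}(\varphi):=(-\mathrm{i})^k\!\sum_{|j|\le n-1} a_j j^k e^{\mathrm{i} j\varphi},
\]
so that $S_n^{(k)}$ is the $(n-1)$-th partial Fourier sum of $g^{(k)}$ (using $g^{(k)}(\varphi)=\sum_{j}a_j(\mathrm{i}j)^k e^{\mathrm{i}j\varphi}$ and $(-\mathrm{i})^k\cdot\mathrm{i}^k=1$).

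For each $k=0,\ldots,m$ I split $S_n^{(k)}=g^{(k)}-T_n^{(k)}$ with tail
\[
T_n^{(k)}(\varphi)=(-\mathrm{i})^k\sum_{|j|\ge n} a_j j^k e^{\mathrm{i} j\varphi}.
\]
Its Wiener norm equals $\sum_{|j|\ge n}|a_j||j|^k$; factoring out the weight $(1+|j|)^\alpha$ (exactly as in Lemma~\ref{lm:f_w_alpha}\,ii)) gives
\[
\|T_n^{(k)}\|_0\le (1+n)^{k-\alpha}\sum_{|j|\ge n}|a_j|(1+|j|)^\alpha=o(n^{k-\alpha}),
\]
since the residual sum tends to $0$ when $a\in W^\alpha$. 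The $g^{(k)}$-parts of $S_n^{(k)}$, multiplied by $(\mathrm{i}\delta)^k/k!$, then reconstruct the Taylor-type main terms $\sum_{k=0}^{m}g^{(k)}(\varphi)(\mathrm{i}\delta)^k/k!$ of the lemma (up to the $1/k!$-convention used for the coefficients), while the tails supply the first $m+1$ error functions $\alpha_{n,k}(\varphi)$ with the claimed $W^0$-bounds.

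The high-order residual $R_n(\varphi,\delta):=\sum_{k\ge m+1}(\mathrm{i}\delta)^k S_n^{(k)}(\varphi)/k!$ is controlled by Lemma~\ref{lm:f_w_alpha}\,iii): every $k\ge m+1$ satisfies $k>\alpha$, hence $\|S_n^{(k)}\|_0\le 2n^{k-\alpha}\|a\|_\alpha$, and summing in $k$ gives
\[
\|R_n(\cdot,\delta)\|_0\le\frac{2\|a\|_\alpha}{n^\alpha}\sum_{k\ge m+1}\frac{(n|\delta|)^k}{k!}\le C\,|\delta|^{m+1}\,n^{m+1-\alpha},
\]
where the last step uses that $n|\delta|$ is bounded on $\Pi_n(a)$, so the tail of the exponential series is dominated by its first term. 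Pulling the common factor $(\mathrm{i}\delta)^{m+1}$ outside produces $\alpha_{n,m+1}(\varphi)(\mathrm{i}\delta)^{m+1}$ with $\|\alpha_{n,m+1}\|_0=O(n^{m+1-\alpha})$.

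The only genuinely delicate point is bookkeeping: $\alpha_{n,m+1}$ constructed this way depends not only on $\varphi$ but also, through the residual series, on $\delta$, so the lemma must be read as asserting a family of $W^0$-functions of $\varphi$, parameterized by $\delta\in[-C/n,C/n]$, whose Wiener norms are bounded \emph{uniformly} in $\delta$ at the stated rates. Everything else is routine, because passing from the sup-norm bounds of Lemma~\ref{lm:f_w_alpha}\,i)--ii) to the Wiener-norm bounds used above is immediate: $\|\cdot\|_0$ is literally the $\ell^1$-norm on Fourier coefficients, which is where all the tail and partial-sum estimates naturally live.
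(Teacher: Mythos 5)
Your proof is correct and follows essentially the same route as the paper: expand the exponential factor $e^{-j\delta}$ in powers of $\delta j$ (the paper uses a finite Taylor expansion with an $O((\delta j)^{m+1})$ remainder where you sum the full series and control the tail, a purely cosmetic difference), identify the order-$k$ coefficients as partial Fourier sums of $g^{(k)}$, bound the Fourier tails by factoring out the weight $(1+|j|)^{\alpha}$, and bound the top-order term via the growth estimate of Lemma \ref{lm:f_w_alpha} iii). Your two bookkeeping remarks (the missing $1/k!$ in the stated main terms, and the $\delta$-dependence of $\alpha_{n,m+1}$) apply equally to the paper's own argument and do not affect correctness.
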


\begin{proof}
We can represent $g_n(\psi)$ in the form:
\[
g_n(\psi) = \sum_{j=-{n-1}}^{n-1} a_j e^{\mathrm{i}j(\varphi + \mathrm{i}\delta)}.
\]

As $|j \delta| \le C$, we can use the following asymptotics
\[
e^{\mathrm{i}j(\mathrm{i}\delta)} = 1 + \sum_{k=1}^{m} \frac{(-\delta j)^k}{k!}
+ O (\delta j) ^ {m+1}.
\]
So we obtain
\begin{align*}
g_n(\psi) &= \sum_{j=-(n-1)}^{n-1} a_j e^{\mathrm{i} j \varphi}
\left( 1 + \sum_{k=1}^{m} \frac{(-\delta j)^k}{k!} + O(\delta j)^{m+1} \right)\\
&= g(\varphi) + \alpha_{n,0}(\varphi) + \sum_{k=1}^{m} (\mathrm{i}\delta)^k
\sum_{j=-(n+1)}^{(n-1)} a_j e^{\mathrm{i} j \varphi} (i j)^k +
\sum_{j=-(n+1)}^{n+1} a_j\, O(\delta j)^{m+1} \\
&= g(\varphi) + \sum_{k=1}^{m} g^{(k)}(\varphi) (\mathrm{i} \delta)^k
+ \alpha_{n,0}(\varphi) + \sum_{k=1}^{m} (\mathrm{i} \delta)^k \alpha_{n,k}(\varphi)
+ \delta^{m+1} \alpha_{n,m+1}(\varphi) .
\end{align*}

It's easy to see, due to Lemma \ref{lm:f_w_alpha}, that
\[
\|\alpha_{n,0}(\varphi)\|_0 =  \sum_{|j| > (n-1)} |a_j|
\le (1+n)^{-\alpha} \sum_{|j| > (n-1)} |a_j| (1 + |j|)^\alpha = o(n^{-\alpha}).
\]
Similarly, for $k=1, 2, \ldots, m$
\[
\|\alpha_{n,k}(\varphi)\|_0 =  \sum_{|j| > (n-1)} |a_j| |j|^k
\le (1+n)^{k-\alpha} \sum_{|j| > (n-1)} |a_j| (1 + |j|)^\alpha = o(n^{k-\alpha}).
\]
Finally,
\begin{align*}
\|\alpha_{n,m+1}(\varphi)\|_0 & \le \mathrm{const}\, \sum_{j=-(n-1)}^{n-1}
|a_j| (1 + |j|)^{m+1} \\
&\le \mathrm{const}\, n^{m+1-\alpha} \sum_{j=-(n-1)}^{n-1}
|a_j| (1 + |j|)^\alpha = O(n^{m+1-\alpha}) .
\end{align*}
\end{proof}

Now we can prove the correctness of the introduction of the value $\varphi_{1,n}(\lambda)$ (see (\ref{eq:gn29}))

\begin{lm}
\label{lm:g_n_psi}
Let  $a(t) \in W^{\alpha}$, $\alpha \ge 2$. Then mapping
\[
g_n(\psi) :\ \Pi_n(a)\ \to \ \mathcal{R}_n(a)
\]
is a bijection for a sufficiently large natural number $n$.
\end{lm}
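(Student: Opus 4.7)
Since $\mathcal{R}_n(a)$ is defined as the image $g_n(\Pi_n(a))$, surjectivity is automatic; the content is to establish injectivity. I will show that $g_n(\psi_1) \ne g_n(\psi_2)$ for any two distinct $\psi_j = \varphi_j + \mathrm{i}\delta_j \in \Pi_n(a)$, by distinguishing two regimes: real parts well separated vs.\ real parts close together.

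\emph{Separated regime.} If $|\varphi_1 - \varphi_2| \ge \varepsilon_0$ for a fixed $\varepsilon_0 > 0$, then condition (2) ensures that the simple arc $g$ is injective on $[0,\pi]$, so $|g(\varphi_1) - g(\varphi_2)| \ge c_0(\varepsilon_0) > 0$. Lemma~\ref{lm:norm_alpha_n_k}, applied with $m = [\alpha] \ge 2$ and $|\delta_j| \le C/n$, gives $|g_n(\psi_j) - g(\varphi_j)| = O(1/n)$, so for large $n$ the difference $|g_n(\psi_1) - g_n(\psi_2)| \ge c_0(\varepsilon_0)/2 > 0$, contradicting the assumed equality.

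\emph{Close regime.} When $|\varphi_1 - \varphi_2| < \varepsilon_0$, convexity of $\Pi_n(a)$ and entirety of $g_n$ (it is a Laurent polynomial) yield
\[
g_n(\psi_1) - g_n(\psi_2) = (\psi_1 - \psi_2)\int_0^1 g_n'\bigl(\psi_2 + t(\psi_1 - \psi_2)\bigr)\,dt ,
\]
so it suffices to show the integral is nonzero. In the bulk, with both $\varphi_j \in [\varepsilon, \pi - \varepsilon]$ for a fixed small $\varepsilon$, condition (2) provides $|g'(\varphi)| \ge c_\varepsilon > 0$, and Lemma~\ref{lm:norm_alpha_n_k} together with the uniform bound on $g_n''$ implied by $a \in W^2$ (and the confinement $|\delta| \le C/n$) give $g_n'(\psi) = g'(\varphi) + O(1/n)$; the integrand stays uniformly bounded away from $0$. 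Near the left endpoint, $\varphi_j \in [c/n, \varepsilon]$, I use the coefficient symmetry $a_j = a_{-j}$ (which forces $g_n'(0) = g_n'''(0) = 0$) and Lemma~\ref{lm:norm_alpha_n_k} combined with Taylor expansions $g'(\varphi) = g''(0)\varphi + o(\varphi)$ and $g''(\varphi) = g''(0) + o(1)$ to obtain $g_n'(\psi) = g''(0)\,\psi + o(|\psi|) + o(1/n)$, whence
\[
\int_0^1 g_n'\bigl(\psi_2 + t(\psi_1 - \psi_2)\bigr)\,dt = g''(0)\,\frac{\psi_1 + \psi_2}{2} + o(1/n) .
\]
Its modulus is at least $|g''(0)|\,c/(2n) - o(1/n)$ since $\Re(\psi_1+\psi_2)/2 \ge c/n$, hence positive for large $n$. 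The right-endpoint subcase is handled symmetrically via $g''(\pi) \ne 0$.

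\textbf{Main obstacle.} The principal difficulty is the endpoint subregime: $g_n'$ has genuine zeros at $\psi = 0,\pi$ (by the coefficient symmetry), so no uniform positive lower bound on $|g_n'|$ is available on $\Pi_n(a)$. The argument is rescued by the built-in margin $\Re\psi \ge c/n$, which separates $\Pi_n(a)$ from these zeros, together with the non-degeneracy $g''(0), g''(\pi) \ne 0$: the quadratic behaviour near the corners delivers the lower bound $|g_n'(\psi)| \gtrsim 1/n$, precisely of the order needed to dominate the perturbative errors supplied by Lemma~\ref{lm:norm_alpha_n_k} once the constants $c$ (small but above a threshold) and $C$ are fixed appropriately.
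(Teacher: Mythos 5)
Your proof follows essentially the same route as the paper's: surjectivity is definitional, and injectivity is established by splitting into an interior regime handled by $g' \ne 0$ and an endpoint regime handled by the margin $\Re\psi \ge c/n$ together with $g''(0), g''(\pi) \ne 0$; the paper merely organizes this as a sequential-compactness contradiction and estimates the endpoint difference via a double integral of $g_n''$ rather than a Taylor expansion of $g_n'$, which is a presentational difference.

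One small inaccuracy in your endpoint estimate: integrating the expansion $g_n'(\psi) = g''(0)\psi + o(|\psi|) + o(1/n)$ along the segment gives an error of size $o\bigl(\max_j|\psi_j|\bigr)$, not $o(1/n)$, so the lower bound should read $\tfrac12|g''(0)|\,|\psi_1+\psi_2| - o\bigl(|\psi_1|+|\psi_2|\bigr)$ rather than $|g''(0)|\,c/(2n) - o(1/n)$. This still closes because on $\Pi_n(a)$ the constraint $\varphi_j = \Re\psi_j \ge c/n$ forces $|\delta_j| \le (C/c)\varphi_j$, hence $|\psi_1|+|\psi_2| \le \mathrm{const}\,(\varphi_1+\varphi_2) \le \mathrm{const}\,|\psi_1+\psi_2|$, and the main term dominates once the endpoint neighbourhood parameter $\varepsilon$ is taken small enough --- which is exactly the comparison the paper makes.
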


\begin{proof}
The surjectivity of this mapping follows from the definition of the set  $\mathcal{R}_n(a)$. \\
We will prove injectivity by contradiction. Suppose that for each $n$ there exists couple of different points $\psi_{1,n}, \; \psi_{2,n} \in \Pi_n(a)$ and $\lambda_n \in \mathcal{R}_n(a)$ such that
\begin{equation}
    \label{eq:g_n_psi_2}
    g_n(\psi_{1,n}) = g_n(\psi_{2,n}) = \lambda_n.
\end{equation}
Without loss of generality, we can assume that the sequences
$\left\{ \psi_{1,n} \right\}_{n=1}^\infty$, $\left\{ \psi_{2,n} \right\}_{n=1}^\infty$
and  $\left\{ \lambda_{n} \right\}_{n=1}^\infty$
have limits  $\varphi_1,\, \varphi_2 \in [0,\pi]$ respectively, and
$\lambda_0 \in \mathcal{R}(a)$. It is obvious that  $\varphi_1 = \varphi_2 := \varphi_0$
because of $g(\varphi_1) = g(\varphi_1) = \lambda_0$, and mapping
$g(\varphi):\ [0,\pi] \to \mathcal{R}(a)$  is injective by condition  (\ref{condition12}).
Suppose that  $\varphi_0 \notin \{0,\pi\}$. Then by  (\ref{eq:g_n_psi_2}) we have
\begin{equation}
    \label{eq:g_diff_0}
    g_n(\psi_{2,n}) - g_n(\psi_{1,n}) = 0.
\end{equation}
Applying Taylor's formula, we obtain 
\[
g_n'(\psi_{1,n})(\psi_{2,n} - \psi_{1,n}) + o\left(|\psi_{2,n} - \psi_{1,n}|\right) = 0.
\]
The latter is impossible since
\[
\lim_{n\to\infty} g_n'(\psi_{1,n}) = g'(\psi_0) \ne 0.
\]
Thus, $\varphi_0 = 0$ or $\varphi_0 = \pi$.
Let's suppose for definiteness, that $\varphi_0=0$. Then for a sufficiently large natural number  $n$
$|\psi_{1,n}| \le \sigma$ and $|\psi_{2,n}| \le \sigma$ where $\sigma > 0$ is a small number. We will show that in this case the equality (\ref{eq:g_diff_0}) is also impossible.
Indeed
\begin{equation}
    \label{eq:g_n_diff_2}
    g_n(\psi_{2,n}) - g_n(\psi_{1,n}) = \int_{I_n} g_n'(s) ds
    = \int_{I_n} \left( \int_{[0,s]} g_n''(u) du \right) ds,
\end{equation}
where $I_n := [\psi_{1,n}, \psi_{2,n}]$ and $[0,s]$ are segments of the complex plane connecting these points.
We note that, identity  (\ref{eq:g_n_diff_2}) is true because $g_n'(0) = 0$.
We rewrite (\ref{eq:g_n_diff_2}) as
\begin{equation}
    \label{eq:g_n_diff_3}
    g_n(\psi_{2,n}) - g_n(\psi_{1,n}) = \frac{g_n''(\psi_{1,n})}{2}(\psi_{2,n}^2 - \psi_{1,n}^2)
    + \int_{I_n} \left( \int_{[0,s]} (g_n''(u) - g_n''(\psi_{1,n})) du \right) ds .
\end{equation}
Let us estimate the integral term. Since $a(t) \in W^\alpha(\mathbb{T})$, $\alpha \ge 2$,
then  $a''(t) \in W^{\alpha-2}(\mathbb{T})$.

Therefore, the function  $g''_n(\psi)$ is uniformly continuous in $\Pi_n(a)$ with respect to $n$.
Thus, the following estimate holds for the difference in the integral expression (\ref{eq:g_n_diff_3})
\[
\sup_{|u|\le\sigma,\, |\psi_{1,n}|\le \sigma}
|g''_n(u) - g''_n(\psi_{1,n})| = o(1), \qquad \sigma \to 0
\]
 which is uniform with respect to $n$. Replacing the variables in (\ref{eq:g_n_diff_3}) by
$u = e^{\mathrm{i}\arg s} v$
and
$s = p \psi_{2,n} + (1-p) \psi_{1,n}$
we obtain
\begin{align*}
& \left| \int_{I_n} \left( \int_{[0,s]} (g_n''(u) - g_n''(\psi_{1,n})) du \right) ds \right| =
\\ & \qquad =|\psi_{2,n} - \psi_{1,n}| \left| \int_0^1 e^{\mathrm{i} \arg s} \left( \int_0^{|s|}
\left( g_n''(e^{\mathrm{i} \arg s} v) - g_n''(\psi_{1,n}) \right) dv \right) dp \right|
\\  & \qquad\le |\psi_{2,n} - \psi_{1,n}| o(1) \int_0^1 \left( \int_0^{|s|} dv \right) dp
\le o(|\psi_{2,n} - \psi_{1,n}|) \int_0^1 |p\psi_{2,n} + (1-p) \psi_{1,n}| dp
\\ & \qquad \le o(|\psi_{2,n} - \psi_{1,n}|) \left( |\psi_{2,n}| + |\psi_{1,n}|\right).
\end{align*}

Thus from (\ref{eq:g_n_diff_3}) we get that for a sufficiently large $n$:
\[
|g_n(\psi_{2,n}) - g_n(\psi_{2,n})| \ge
|\psi_{2,n} - \psi_{1,n}| \left( \frac{|g''(0)|}{4} |\psi_{2,n} + \psi_{1,n}|
- o(|\psi_{2,n}| + |\psi_{1,n}|)\right) .
\]
Further, taking in account that
\[
\psi_{1,n} = \varphi_{1,n} + \mathrm{i} \delta_{1,n},
\qquad
\psi_{2,n} = \varphi_{2,n} + \mathrm{i} \delta_{2,n},
\]
where $\varphi_{1,2,n} \ge \frac{c}{n}$ and $|\delta_{1,2,n}| \le \frac{C}{n}$, we obtain
\begin{align*}
\dfrac{|g_n(\psi_{2,n}) - g_n(\psi_{2,n})|}{|\psi_{2,n} - \psi_{1,n}|} & \ge
\frac{|g''(0)|}{4} \sqrt{(\varphi_{1,n} + \varphi_{2,n})^2 + (\delta_{1,n} + \delta_{2,n})^2}
-o\left( \sqrt{\varphi_{1,n}^2 + \delta_{1,n}^2} + \sqrt{\varphi_{2,n}^2 + \delta_{2,n}^2}
\right)
\\
& > \frac{|g''(0)|}{4} \left( \varphi_{2,n} + \varphi_{1,n} \right) -o\left( \varphi_{1,n} +
|\delta_{1,n}| + \varphi_{2,n} + |\delta_{2,n}|\right)
\\
& \ge \frac{|g''(0)|}{4}\cdot \frac{2c}{n} - o\left( \sfrac{1}{n}\right) > 0.
\end{align*}
Consequently, $| g_n(\psi_{2,n}) - g_n(\psi_{1,n})| > 0$, which contradicts (\ref{eq:g_diff_0}).
Case  $\varphi_0 = \pi$ is treated similarly.
\end{proof}

\begin{remark}
\label{rem:34}
The function $g_n(\psi)$ is analytic, therefore it is a conformal mapping of the domain $\Pi_n(a)$ onto $\mathcal{R}_n(a)$.
\end{remark}

Now consider the function $\hat{b}(t, \lambda_0)$ of the form   (\ref{eq:b_hat_def}), where $\lambda_0 \in \mathcal{R}(a)$.
It is convenient to consider it in two forms. As a function, the second argument of which is
$s_0=\varphi_1(\lambda_0)\ (=g^{-1}(\lambda_0))\ \in (0,\pi)$:
\begin{equation}
    \label{eq:b_t_s0}
    b(t,s_0) := \hat{b}(t, g(s_0)) = \frac{\left(a(t) - g(s_0) \right) e^{\mathrm{i} s_0}}
    {(t - e^{\mathrm{i} s_0})(t^{-1}- e^{\mathrm{i} s_0})}
\end{equation}
and as a function, the second argument of which is $\tau = e^{\mathrm{i} s_0} \in \mathbb{T}$:
\begin{equation}
    \label{eq:b_tilde_t_tau}
    \tilde{b}(t,\tau) := \frac{\left(a(t) - a(\tau) \right) \tau}
    {(t - \tau)(t^{-1} - \tau)}.
\end{equation}

Similarly, we will consider (\ref{eq:b_n_hat_def}) in the form
\begin{equation}
    \label{eq:b_n_t_s}
    b_n(t,s) := \frac{\left(a_n(t) - g_n(s) \right) e^{\mathrm{i} s}}
    {(t - e^{\mathrm{i} s})(t^{-1}- e^{\mathrm{i} s})}, \qquad s \in \Pi_n(a)
\end{equation}
and in the form
\begin{equation}
    \label{eq:b_n_tilde_t_tau}
    \tilde{b}_n(t,\tau) := \frac{\left(a_n(t) - a_n(\tau) \right) \tau}
    {(t - \tau)(t^{-1} - \tau)}, \qquad \tau \in \mathcal{R}_n(a) .
\end{equation}

The following Lemma gives conditions for the functions introduced above and for their
partial derivatives to be in $W^\alpha$.
\begin{lm}
\label{lm:b_in_w}
Let  $a \in \mathsf{CSL}^{\alpha}$, $\alpha \ge 2$.  If  $s_0 \in (0, \pi)$, and $s \in \Pi_n(a)$, then
\begin{enumerate}[i)]
\item $\tilde{b}(\cdot, \tau) \in W^{\alpha-2}$, $\tilde{b}(t, \cdot) \in W^{\alpha-2}$ and
\begin{equation}
    \label{eq:b_tilde_norm_1}
    \| \tilde{b}(\cdot, \tau) \|_{\alpha-2} \le \mathrm{const}\, \| a \|_\alpha, \quad
    \| \tilde{b}(t, \cdot) \|_{\alpha-2} \le \mathrm{const}\, \| a \|_\alpha,
\end{equation}
\begin{equation}
    \label{eq:b_n_norm_1}
    \| b_n(\cdot, s)\|_{\alpha-2} \le \mathrm{const}\, \| a \|_\alpha
\end{equation}
and
\begin{equation}
    \label{eq:b_diff_norm_1}
    \| b(\cdot, s_0) - b_n(\cdot, s_0) \|_{\alpha-2} \le \mathrm{const}\, n^{-(\alpha-2)};
\end{equation}

\item for $\alpha \ge 2 + p + \ell$, we have that
\[
\frac{\partial^{p+\ell} \tilde{b}(\cdot, \tau) }{\partial^p t \partial^\ell \tau} \in W^{\alpha-2-p-\ell},
\qquad \frac{\partial^{p+\ell} \tilde{b}(t, \cdot) }{\partial^p t \partial^\ell \tau} \in W^{\alpha-2-p-\ell},
\]
and
\begin{equation}
    \label{eq:3_14}
    \left\| \frac{\partial^{p+\ell} \tilde{b}(\cdot, \tau) }{\partial^p t \partial^\ell \tau} \right\|_{\alpha-2-p-\ell}
    \le \mathrm{const}\, \| a \|_\alpha; \qquad
    \left\| \frac{\partial^{p+\ell} \tilde{b}(t, \cdot) }{\partial^p t \partial^\ell \tau} \right\|_{\alpha-2-p-\ell}
    \le \mathrm{const}\, \| a \|_\alpha,
\end{equation}
\begin{equation}
    \label{eq:3_15}
    \left\| \frac{\partial^{p+\ell} b_n(\cdot,s) }{\partial^p t \partial^\ell s} \right\|_{\alpha-2-p-\ell}
    \le \mathrm{const}\, \| a \|_\alpha,
\end{equation}
\begin{equation}
    \label{eq:3_16}
    \left\| \frac{\partial^{p+\ell} b(\cdot,s_0) }{\partial^p t \partial^\ell s_0}
    - \frac{\partial^{p+\ell} b_n(\cdot,s_0) }{\partial^p t \partial^\ell s_0}
    \right\|_{\alpha}
    \le \mathrm{const}\, n^{-(\alpha-2-p-\ell)};
\end{equation}

\item for $\alpha < 2 + p + \ell$, we have
\begin{equation}
    \label{eq:3_17}
    \left\| \frac{\partial^{p+\ell} b_n(\cdot,s_0) }{\partial^p t \partial^\ell s_0} \right\|_{0}
    \le \mathrm{const}\, n^{2+p+\ell-\alpha} \| a \|_\alpha.
\end{equation}
\end{enumerate}

Here all values of ``const'' do not depend on $\tau $, $t$, $s$, $s_0$ and $n$, respectively.
\end{lm}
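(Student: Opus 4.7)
The plan is to reduce every estimate in the lemma to a single explicit Laurent expansion of $\tilde b(t,\tau)$ generated by the symmetry $a_j=a_{-j}$. Using $a(t)=a_0+\sum_{j\ge1}a_j(t^j+t^{-j})$ together with the algebraic factorization (verified directly from geometric-sum expressions for $t^j-\tau^j$ and $(t\tau)^j-1$)
\begin{equation*}
(t^j+t^{-j})-(\tau^j+\tau^{-j})=-\frac{t\,(t-\tau)(t^{-1}-\tau)}{(t\tau)^j}\Bigl(\sum_{k=0}^{j-1}t^k\tau^{j-1-k}\Bigr)\Bigl(\sum_{\ell=0}^{j-1}(t\tau)^\ell\Bigr),
\end{equation*}
one obtains, after cancellation against the denominator in \eqref{eq:b_tilde_t_tau}, the closed-form representation
\begin{equation*}
\tilde b(t,\tau)=-\sum_{j\ge 1}a_j\sum_{k,\ell=0}^{j-1}t^{k+\ell-j+1}\tau^{\ell-k}.
\end{equation*}
Each inner double sum is a Laurent polynomial in $t^{\pm1},\tau^{\pm1}$ of bidegree at most $j-1$ whose coefficients are all of modulus $1$; this uniform ``kernel'' estimate drives everything that follows.

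From this representation all bounds in i) and ii) collapse to a single rearrangement of sums. Collecting the coefficient of $t^m$ for $\tau\in\mathbb T$ yields $|[\tilde b(\cdot,\tau)]_m|\le\sum_{j\ge|m|+1}(j-|m|)|a_j|$; multiplying by $(|m|+1)^{\alpha-2}$ and interchanging the two summations using the support constraint $|m|\le j-1$ produces $\|\tilde b(\cdot,\tau)\|_{\alpha-2}\le\mathrm{const}\,\|a\|_\alpha$. The dual estimate in $\tau$ is obtained by the analogous computation with the roles of $t$ and $\tau$ interchanged. Each derivative $\partial_t^p\partial_\tau^\ell$ pulls down factors bounded by $|m|^p$ and $(j-1)^\ell$ respectively, and these are absorbed into the corresponding reduction of weight in the final sum, yielding \eqref{eq:3_14} whenever $\alpha\ge 2+p+\ell$. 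The polynomial analogues \eqref{eq:b_n_norm_1} and \eqref{eq:3_15} follow by the very same argument with $a_n$ in place of $a$ (the expansion is then a finite sum), and the difference estimates \eqref{eq:b_diff_norm_1} and \eqref{eq:3_16} are obtained by applying the expansion to $a-a_n$ and invoking Lemma~\ref{lm:f_w_alpha}~i)--ii). For iii), where $\alpha<2+p+\ell$, the only new input is Lemma~\ref{lm:f_w_alpha}~iii) applied to the truncated tail $\sum_{j<n}j^{1+p+\ell}|a_j|\le\mathrm{const}\,n^{2+p+\ell-\alpha}\|a\|_\alpha$.

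The one genuine subtlety is the passage from $\tilde b(\cdot,\tau)$ with $\tau\in\mathbb T$ to $b_n(\cdot,s)$ with $s=\varphi+\mathrm i\delta\in\Pi_n(a)$, where $\tau=e^{\mathrm is}$ has $|\tau|=e^{-\delta}\neq 1$. Since only indices $j\le n-1$ appear in the expansion of $b_n$, and $|\delta|\le C/n$, every factor $|\tau^{\ell-k}|\le e^{|\delta|(j-1)}\le e^C$ stays controlled, so all the above inequalities survive with constants independent of $s$ and $n$. Derivatives in $s$ become $\tau$-derivatives via $\partial_s=\mathrm i\tau\partial_\tau$, producing only bounded multiplicative factors on $\Pi_n(a)$. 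The main anticipated obstacle is not substance but bookkeeping: keeping constants uniform in $\tau,s,n$ through two-parameter Wiener norms, and, in iii), balancing the combinatorial factor $j^{p+\ell+1}$ against Lemma~\ref{lm:f_w_alpha}~iii) without picking up a spurious extra power of $n$.
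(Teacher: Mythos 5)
Your approach is correct in spirit but takes a genuinely different route from the paper. The paper writes $b(t,s_0) = c_0(s_0)\,t\,(b_1-b_2)$, symmetrizes to obtain the one--sided expansion
\begin{equation*}
b^{(+)}(t,s_0)=t\sum_{k\ge 0}\Bigl(\sum_{j\ge k+2}a_j\,\frac{\sin (j-1-k)s_0}{\sin s_0}\Bigr)t^k,
\end{equation*}
and drives every estimate through the Dirichlet--type bounds $\bigl|\sin(Ls)/\sin s\bigr|\le \mathrm{const}\,L$ and $\bigl|\partial_s^\ell(\sin(Ls)/\sin s)\bigr|\le \mathrm{const}\,L^{\ell+1}$, which remain valid on $\Pi_n(a)$ because $|L\,\Im s|\le M$. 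Your double geometric--sum factorization bypasses the sine quotient entirely and produces the symmetric Laurent expansion $\tilde b(t,\tau)=-\sum_{j\ge 1}a_j\sum_{k,\ell=0}^{j-1}t^{k+\ell-j+1}\tau^{\ell-k}$, which checks out (the identity you use is exactly $(t^j-\tau^j)\bigl((t\tau)^j-1\bigr)/(t\tau)^j$ rewritten). From there each derivative and each weighted Wiener norm becomes a lattice--point count: $j-|m|$ solutions of $k+\ell=m+j-1$ in the box, so $|[\tilde b(\cdot,\tau)]_m|\le\sum_{j\ge|m|+1}(j-|m|)|a_j|$, and interchanging sums gives $\sum_j|a_j|\sum_{|m|\le j-1}(j-|m|)(1+|m|)^{\alpha-2}\le\mathrm{const}\sum_j|a_j|(1+j)^\alpha$ --- the same rearranged series the paper ends up with. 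The manifest $t\leftrightarrow\tau$ symmetry of your expansion ($k\mapsto j-1-k$) makes the dual estimate free, and the passage to $\Pi_n(a)$ is handled by $|\tau^{\ell-k}|\le e^{C}$ rather than by analytic continuation of the sine quotient. Both routes are sound; yours is arguably cleaner because the kernel is visibly a Laurent polynomial with unit modulus coefficients.

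One concrete correction is needed in your treatment of iii). You state the combinatorial factor as $j^{1+p+\ell}$ and then worry about avoiding a ``spurious extra power of $n$''. The actual factor is $j^{2+p+\ell}$, and with it there is nothing to avoid: in $\|f\|_0=\sum_m|f_m|$ the $j$-th term of the expansion contributes $j^2$ pairs $(k,\ell')$, each of modulus at most $\mathrm{const}\,|a_j|\,j^{p+\ell}$ after differentiation, and the sum over $m$ of the per-$m$ counts recovers the full $j^2$ (the per-$m$ count is $\le j$, but you must sum over all exponents $m=k+\ell'-j+1-p$, not fix one). Thus $\|\partial_t^p\partial_\tau^\ell b_n(\cdot,\tau)\|_0\le\mathrm{const}\sum_{j<n}|a_j|\,j^{2+p+\ell}$, and Lemma~\ref{lm:f_w_alpha}~iii) with $k=2+p+\ell$ gives exactly $\mathrm{const}\,n^{2+p+\ell-\alpha}\|a\|_\alpha$, matching \eqref{eq:3_17} and the paper's own factor $(j+1)^{p+\ell+2}$. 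As you wrote it, $\sum_{j<n}j^{1+p+\ell}|a_j|$ would (via the same Lemma with $k=1+p+\ell$) produce $n^{1+p+\ell-\alpha}$, which is internally inconsistent with the $n^{2+p+\ell-\alpha}$ on your right-hand side. It looks like you inadvertently bounded $\sup_m|f_m|$ rather than $\sum_m|f_m|$. Once the exponent is corrected, your proof goes through.
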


\begin{proof}
We represent the function  $b(t,s)$ in the form
\[
b(t,s_0) = c_0(s_0) t \left( b_1(t,s_0) - b_2(t,s_0) \right),
\]
where
\[
b_1(t,s_0) := \frac{a(t) - g(s_0)}{t-e^{\mathrm{i}s_0}}, \quad
b_2(t,s_0) := \frac{a(t) - g(s_0)}{t-e^{-\mathrm{i}s_0}}, \quad
c_0(s_0) = \frac{1}{2\mathrm{i} \sin s_0}.
\]
Since $g(s) = g(-s)$, we get
\begin{align*}
b(t,s_0) &= c_0(s_0) t \sum_{j= -\infty, j \neq 0}^{\infty} a_j \left( \frac{t^j - e^{\mathrm{i} j s_0} }{t - e^{\mathrm{i} s_0}}
- \frac{t^j - e^{-\mathrm{i} j s_0} }{t - e^{-\mathrm{i} s_0}} \right) := b^{(+)}(t, s_0) + b^{(-)}(t, s_0),
\end{align*}
Here the $ b^{(\pm)}(t,s_0)$ are responsible for summation over $j>0$ and $j<0$, respectively.
We estimate the first term.
\begin{align*}
& b^{(+)}(t, s_0) = c_0(s_0) t \sum_{j=1}^{\infty} a_j \sum_{k=0}^{j-1} t^k \left(e^{\mathrm{i} (j-1-k) s_0}
- e^{-\mathrm{i} (j-1-k) s_0} \right)\\
&= 2 \mathrm{i} c_0(s_0) t \sum_{j=1}^{\infty} a_j \sum_{k=0}^{j-2} t^k \sin (j-1-k) s_0 .
\end{align*}

Changing the order of summation, we have
\begin{equation}
    \label{eq:b_3_18}
    b^{(+)}(t,s_0) = t \sum_{k=0}^{\infty} \left( \sum_{j=k+2} ^{\infty} a_j
    \frac{\sin (j-1-k) s_0}{\sin s_0} \right) t^k .
\end{equation}
Let us get the estimate of type (\ref{eq:b_tilde_norm_1}).
For this we use the following inequality:
\begin{equation}
    \label{eq:sin_l_s_estim}
    \left| \frac{\sin (L\cdot s)}{\sin s} \right| \le \mathrm{const}\,  L,
    \qquad L>0,
\end{equation}
which is true for all real $s$ (in particular for $s= s_0 \in [0, \pi]$), and complex $s$,
such that
\begin{equation}
    \label{eq:l_im_s_estim}
    |L\cdot \Im s| \le M,
\end{equation}
where $M>0$ is a fixed number. Then
\begin{align*}
\| b^{(+)}(\cdot, s_0)\|_{\alpha-2} &\le \mathrm{const}\, \sum_{k=0}^{\infty}
\left( \sum_{j=k+2}^{\infty} |a_j|\cdot |j-1-k| \right) (2+k)^{\alpha-2}
\\ &= \mathrm{const}\, \sum_{j=2}^{\infty} |a_j|
\left( \sum_{k=0}^{j-2} |j-1-k| \cdot (2+k)^{\alpha-2} \right).
\end{align*}
It is not difficult to show that
\[
\sum_{k=0}^{j-2} |j-1-k| (2+k)^{\alpha-2} \le \mathrm{const}\, (1+j)^\alpha,
\]
then
\begin{equation}
    \label{eq:b_norm_3_21}
    \| b^{(+)}(\cdot, s_0)\|_{\alpha-2} \le \mathrm{const}\, \sum_{j=2}^{\infty} |a_j| (1+j)^\alpha
    = \mathrm{const}\, \|a\|_\alpha .
\end{equation}

Similarly, it can be shown that
\begin{equation*}
\|b^{(-)}(\cdot, s_0) \|_{\alpha - 2} \leq \mathrm{const}\, \cdot \sum_{-\infty}^{j=-2} |a_j| (1 + |j|)^{\alpha}.
\end{equation*}

Obviously, the last two inequalities imply the fulfillment of the first inequality (\ref{eq:b_tilde_norm_1}).
The second relation (\ref{eq:b_tilde_norm_1}) is true by symmetry
$\tilde{b}(t, \tau) = \tilde{b}(\tau, t)$.
The inequality (\ref{eq:b_diff_norm_1}) also follows from (\ref{eq:b_tilde_norm_1}) if instead of $a(t)$ we take the difference $(a(t) - a_n(t))$ and use the statement of the Lemma \ref{lm:f_w_alpha}, i).
The inequality (\ref{eq:b_n_norm_1}) is proved in the same way as in (\ref{eq:b_norm_3_21}), it should only be replaced $s_0$ with $s$ in (\ref{eq:b_3_18}),
infinite upper limits by $j$ to $(n-1)$, and take into account that for $s \in \Pi_n(a)$ the condition (\ref{eq:l_im_s_estim}) is satisfied because
$|\Im s| \le \dfrac{C}{n}$, and value  $(j-1-k) < n$.

Let us prove item  ii). According to Lemma \ref{lm:f_w_alpha} ii), if $f(t) \in W^\alpha$, then
\[
\frac{\partial^p f}{\partial^p t}(t) \in W^{\alpha-p}
\quad \text{and}
\]
\begin{equation}
    \label{eq:norm_d_p_f}
    \left\| \frac{\partial^p f}{\partial^p t} \right\|_{\alpha-p} \le \mathrm{const}\, \|f \|_p .
\end{equation}

Therefore, in case $\ell = 0$, the first of the relations (\ref{eq:3_14}) is proved. By the symmetry of $\tilde{b}(t,\tau)$ we can argue that the second of the inequalities is proved.
(\ref{eq:3_14}) in case  $\ell=0$.
Now let $p = 0$, in case of the first of the relations (\ref{eq:3_14}), using (\ref{eq:b_3_18})
we have
\begin{equation}
    \label{eq:d_b_3_22}
    \frac{\partial^\ell}{\partial^\ell s_0} b^{(+)}(t,s_0) = \sum_{k=0}^{\infty}
    \left( \sum_{j=k+2}^{\infty} a_j \frac{\partial^\ell}{\partial^\ell s_0}
    \left( \frac{\sin(j-1-k)s_0}{\sin s_0} \right) t^k
    \right) .
\end{equation}

Then, by analogy with (\ref{eq:sin_l_s_estim}), we can use the inequality
\begin{equation}
    \label{eq:d_l_sin_3_23}
    \frac{\partial^\ell}{\partial s^\ell} \left( \frac{\sin(L\cdot s)}{s} \right) \le
        \mathrm{const}\, L^{\ell+1}
\end{equation}
which is true for all real $s=s_0$ and complex $s$ because (\ref{eq:l_im_s_estim}).
Then we get
\begin{align*}
\left\| \frac{\partial^\ell}{\partial s^\ell} b^{(+)}(t,s) \right\|_{\alpha-2-\ell}
&\le \mathrm{const}\, \sum_{k=0}^{\infty} \sum_{j=k+2}^{\infty} |a_j|\cdot |j-1-k|^{\ell+1}
(2+k)^{\alpha-2-\ell}
\\ &=\mathrm{const}\, \sum_{j=2}^{\infty} |a_j| \left( \sum_{k=0}^{j-2} |j-1-k|^{\ell+1}
(2+k)^{\alpha-2-\ell} \right)
\\ &=\mathrm{const}\, \sum_{j=2}^{\infty} |a_j| (1+j)^\alpha \le \mathrm{const}\, \|a\|_{\alpha}.
\end{align*}

The assertion for $b^{(-)}(t, s)$ and the other assertions ii) are proved by analogy with i) and by taking into account the property (\ref{eq:norm_d_p_f}).
We turn to the proof of iii). Differentiating (\ref{eq:d_b_3_22}) $p$ times over $t$, we obtain
\[
\frac{\partial^{p+\ell} b_n(t,s_0)}{\partial^p t \partial^\ell s_0}
= \sum_{k=0}^{n-1} \left( \sum_{j=k+2}^{n-1} a_j \frac{\partial^\ell}{\partial^\ell s_0}
\left( \frac{\sin(j-1-k)s_0}{\sin s_0} \right) \right)
\left( \prod_{v=0}^{p-1} (k-v) \right) t^{k-p} .
\]
Thus, using (\ref{eq:d_l_sin_3_23}), (replacing $\infty$ with $(n + 1)$), we get
\begin{align*}
\left\| \frac{\partial^{p+\ell} b_n(t,s_0)}{\partial^p t \partial^\ell s_0} \right\|_0
&\le \mathrm{const}\, \sum_{k=p}^{n-1} \left( \sum_{j=k+1}^{n-1} |a_j| \cdot |j-1-k|^{\ell+1} \right) k^p
\\ &\le \mathrm{const}\, \sum_{j=2}^{n-1} |a_j| \sum_{k=p}^{j-2} (j-1-k)^{\ell+1} k^p
\le \mathrm{const}\, \sum_{j=2}^{n-1} |a_j| (j+1)^{p+\ell+2}
\\ &\le \mathrm{const}\, n^{p+\ell+2-\alpha} \sum_{j=2}^{n-1} |a_j| (j+1)^\alpha
\le \mathrm{const}\, n^{p+\ell+2-\alpha} \|a\|_\alpha .
\end{align*}
The case of the function $b^{(-)}(t)$ is treated similarly.
\end{proof}

\begin{lm}
\label{lm:d_b_n}
Let $s = s_0 + \mathrm{i} \delta \in \Pi_n(a)$.
If $a(t) \in W^\alpha$, $p+\ell \le \alpha - 2$, $m=[\alpha-2-p-\ell]$ then
\[
\frac{\partial^{p+\ell}}{\partial^p t \partial^\ell s} b_n(t,s) =
\frac{\partial^{p+\ell}}{\partial^p t \partial^\ell s} b(t,s_0) +
\sum_{k=1}^{m} \frac{\partial^{p+\ell+k}}{\partial^p t \partial^{\ell+k} s} b(t,s_0) (\mathrm{i} \delta)^k
+ \sum_{k=0}^{m+1} \beta_{n,k}^{p,l}(t,s_0) (\mathrm{i} \delta)^k ,
\]
where $\beta_{n,k}^{p,l}(\cdot,s_0) \in W^0$ and besides
\begin{align*}
& \| \beta_{n,k}^{p,l}(\cdot,s_0) \|_0 = o\left(n^{k-\alpha-2-p-\ell} \right),
\quad k=0, 1, \ldots, m.
\\
& \| \beta_{n,m+1}^{p,l}(\cdot, s_0) \|_0 = O\left(n^{(m+1)-\alpha-2-p-\ell} \right)
\end{align*}
\end{lm}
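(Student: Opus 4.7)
The strategy mirrors the proof of Lemma~\ref{lm:norm_alpha_n_k}, transferred to the mixed partial derivatives of $b_n(t,s)$. The key observation is that, in view of the explicit representation~(\ref{eq:b_3_18}) obtained in the proof of Lemma~\ref{lm:b_in_w}, the function $b_n(t,s)$ is a finite polynomial in $t$, $t^{-1}$, $e^{\mathrm{i}s}$, $e^{-\mathrm{i}s}$; hence for each fixed $t \in \mathbb{T}$ the map $s \mapsto \tfrac{\partial^{p+\ell}}{\partial t^p \partial s^\ell} b_n(t, s)$ is entire. I would therefore apply Taylor's formula with integral remainder in the second variable at the point $s_0$ to order $m+1$: writing $s = s_0 + \mathrm{i}\delta$,
\[
\frac{\partial^{p+\ell}}{\partial t^p \partial s^\ell} b_n(t, s) = \sum_{k=0}^{m} \frac{(\mathrm{i}\delta)^k}{k!} \frac{\partial^{p+\ell+k} b_n}{\partial t^p \partial s^{\ell+k}}(t, s_0) + R_{m+1}^{(n)}(t, s_0, \delta),
\]
with
\[
R_{m+1}^{(n)}(t, s_0, \delta) = \frac{(\mathrm{i}\delta)^{m+1}}{m!} \int_0^1 (1-\tau)^m \frac{\partial^{p+\ell+m+1} b_n}{\partial t^p \partial s^{\ell+m+1}}(t, s_0 + \mathrm{i}\tau\delta)\, d\tau.
\]

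Next, for each $k \in \{0, 1, \ldots, m\}$ we have $p + \ell + k \le \alpha - 2$, so estimate~(\ref{eq:3_16}) of Lemma~\ref{lm:b_in_w} (applied with $\ell$ replaced by $\ell + k$) gives
\[
\left\| \frac{\partial^{p+\ell+k} b_n}{\partial t^p \partial s^{\ell+k}}(\cdot, s_0) - \frac{\partial^{p+\ell+k} b}{\partial t^p \partial s^{\ell+k}}(\cdot, s_0) \right\|_0 = o\bigl(n^{-(\alpha-2-p-\ell-k)}\bigr).
\]
Substituting these identities into the Taylor sum isolates the principal terms $\frac{\partial^{p+\ell+k}b}{\partial t^p \partial s^{\ell+k}}(t, s_0)\,(\mathrm{i}\delta)^k$ of the asserted expansion, and sends the remaining pieces (together with the combinatorial factors $1/k!$) into the functions $\beta_{n,k}^{p,\ell}$ for $0\le k \le m$, with norms of the required order.

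The term of order $m+1$ sits in the low-regularity regime: $\alpha - 2 - p - \ell - (m+1) < 0$, so Lemma~\ref{lm:b_in_w} ii) is no longer available. Instead I would use Lemma~\ref{lm:b_in_w} iii), i.e.~(\ref{eq:3_17}) with $\ell$ replaced by $\ell + m + 1$, which bounds the derivative of $b_n$ \emph{directly} in terms of the growth of the Fourier coefficients of $a$. This gives
\[
\left\| \frac{\partial^{p+\ell+m+1} b_n}{\partial t^p \partial s^{\ell+m+1}}(\cdot, s_0 + \mathrm{i}\tau\delta) \right\|_0 \le \mathrm{const}\, n^{(m+1)-(\alpha-2-p-\ell)}
\]
uniformly in $\tau \in [0,1]$ and in $s_0$ with $s_0 + \mathrm{i}\tau\delta \in \Pi_n(a)$. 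Averaging in $\tau$ then yields $\|\beta_{n,m+1}^{p,\ell}(\cdot, s_0)\|_0 = O\bigl(n^{(m+1)-(\alpha-2-p-\ell)}\bigr)$, as claimed.

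The main technical point, and the only real obstacle, is guaranteeing that the bound in the last step is genuinely uniform over the complex strip $\Pi_n(a)$, since $s_0 + \mathrm{i}\tau\delta$ sweeps through $\Pi_n(a)$ as $\tau \in [0,1]$. This rests on the condition $|(j-1-k)\cdot \Im s| \le C$ for $s \in \Pi_n(a)$ and $0 \le k < j \le n-1$, which is exactly what underpinned the uniform bounds~(\ref{eq:sin_l_s_estim}) and~(\ref{eq:d_l_sin_3_23}) used in Lemma~\ref{lm:b_in_w}; once this uniformity is in place, the rest of the argument is assembly.
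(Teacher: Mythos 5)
Your proof is correct and fleshes out the argument the paper only sketches with a one-line reference to Lemma~\ref{lm:norm_alpha_n_k}: the Taylor expansion in $s$ with integral remainder, combined with estimates (\ref{eq:3_16})--(\ref{eq:3_17}) of Lemma~\ref{lm:b_in_w} and the uniformity over $\Pi_n(a)$ guaranteed by condition (\ref{eq:l_im_s_estim}), is exactly the intended route. The one small soft spot is that you invoke (\ref{eq:3_16}) directly for an $o$-rate whereas, as printed, (\ref{eq:3_16}) reads as an $O$-bound with a constant; the $o$ does hold, but it requires noting (as in the derivation of (\ref{eq:b_diff_norm_1}) via Lemma~\ref{lm:f_w_alpha}~i)) that the constant there is controlled by the tail $\|Q_n a\|_\alpha$, which tends to zero.
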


The proof of this Lemma is carried out on the basis of the previous Lemma, similarly to the Lemma \ref{lm:norm_alpha_n_k}.

An important role in the theory of Toeplitz operators is played by the concept of the topological index of a function.
\begin{definition}
\label{def:wind}
Let the function $a(t)$ be continuous on the unit circle $\mathbb{T}$, and $a(t) \ne 0$, $t \in \mathbb{T}$,
then the topological index of the function $a(t)$ with respect to the point $z = 0$ is an integer of the form
\[
\mathrm{wind}\, a(t) := \left. \frac{1}{2\pi} \arg a(t) \right|_{\mathbb{T}}
\]
where $\left. \arg a(t) \right |_{\mathbb{T}}$ is the increment of the continuous branch of the argument of the function $a(t)$,
when the point $t$ makes a full turn on the curve $\mathbb{T}$ in the positive direction.
\end{definition}

Consider the problem about topological index of the functions $b(t,s)$ and $b_n(t,s)$.
\begin{lm}
\label{lm:wind_b_n}
Let  $a(t) \in W^\alpha$, $\alpha \geq 2$. Then for any  $s_0 \in (0,\pi)$  we have
\[
\mathrm{wind}\, b(t,s_0) = 0
\]
and for any  $s \in \Pi_n(a)$, we have
\[
\mathrm{wind}\, b_n(t,s) = 0 .
\]
\end{lm}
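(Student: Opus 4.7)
The plan is to exploit the palindromic symmetry $a_j = a_{-j}$, which gives $a(t) = a(t^{-1})$ and the analogous identity $a_n(t) = a_n(t^{-1})$ for the Laurent polynomial $a_n$. Substituting $t \mapsto t^{-1}$ in (\ref{eq:b_hat_def}) leaves the denominator $(t - e^{\mathrm{i} s_0})(t^{-1} - e^{\mathrm{i} s_0})$ unchanged (its two factors merely swap) and the numerator unchanged, so $b(t^{-1}, s_0) = b(t, s_0)$; the identical identity holds for $b_n$.

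Before applying this symmetry, I first check that $b(\cdot, s_0)$ and $b_n(\cdot, s)$ are continuous and nonzero on $\mathbb{T}$, so that the winding numbers are defined. For $b(\cdot, s_0)$ with $s_0 \in (0, \pi)$, the numerator $a(t) - g(s_0)$ vanishes on $\mathbb{T}$ precisely at $t = e^{\pm \mathrm{i} s_0}$ by the simple-loop condition (\ref{condition12}), with simple zeros since $g'(s_0) \ne 0$; these cancel the simple zeros of the denominator, so the quotient extends continuously and never vanishes. For $b_n$ the non-vanishing on $\mathbb{T}$ is already stated right after (\ref{eq:b_n_hat_def}); it can be justified by the same cancellation argument when $s$ is real, and for complex $s$ any zero $t = e^{\mathrm{i}\varphi_0}$ of $a_n(t) - g_n(s)$ on $\mathbb{T}$ would yield $g_n(\varphi_0) = g_n(s)$, contradicting the injectivity of $g_n$ on $\Pi_n(a)$ established in Lemma \ref{lm:g_n_psi}.

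With continuity and non-vanishing in hand, select a continuous branch $\Theta(\varphi)$ of $\arg b(e^{\mathrm{i}\varphi}, s_0)$ on $[0, 2\pi]$. The symmetry above reads $b(e^{\mathrm{i}\varphi}, s_0) = b(e^{\mathrm{i}(2\pi - \varphi)}, s_0)$ on $\mathbb{T}$, so the map $\varphi \mapsto \Theta(\varphi) - \Theta(2\pi - \varphi)$ takes values in $2\pi \mathbb{Z}$; continuity forces it to be a constant integer multiple of $2\pi$, and evaluating at $\varphi = \pi$ shows the constant is zero. Consequently $\Theta(2\pi) = \Theta(0)$, and hence $\mathrm{wind}\, b(t, s_0) = (\Theta(2\pi) - \Theta(0))/(2\pi) = 0$. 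The identical argument applied to $\varphi \mapsto b_n(e^{\mathrm{i}\varphi}, s)$ gives $\mathrm{wind}\, b_n(t, s) = 0$.

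The only delicate step is the non-vanishing of $b_n(\cdot, s)$ on $\mathbb{T}$ when $\Im s \ne 0$, since then $e^{\pm \mathrm{i} s} \notin \mathbb{T}$ and one cannot invoke a direct cancellation between numerator and denominator; once this is settled using Lemma \ref{lm:g_n_psi}, the winding computation is a purely topological consequence of the inversion symmetry $t \leftrightarrow t^{-1}$ and does not depend on $n$.
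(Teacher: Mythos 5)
Your winding-number computation is correct and follows the same route as the paper's: both hinge on the symmetry $b(e^{\mathrm{i}\varphi}, s_0) = b(e^{\mathrm{i}(2\pi - \varphi)}, s_0)$ coming from $a(t) = a(t^{-1})$. The paper phrases the conclusion geometrically (the image is ``a curve without interior, described twice''), while you make the identical point formal via a continuous argument branch and the identity $\Theta(\varphi) - \Theta(2\pi-\varphi)\equiv 0$; that is a clean sharpening of the same idea, and your verification that $b(\cdot,s_0)$ extends continuously and does not vanish on $\mathbb{T}$ for real $s_0\in(0,\pi)$ via cancellation of simple zeros is also correct.

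There is, however, a gap in the auxiliary non-vanishing argument for $b_n(\cdot, s)$ when $\Im s \neq 0$. From a hypothetical zero $e^{\mathrm{i}\varphi_0}\in\mathbb{T}$ you deduce $g_n(\varphi_0) = g_n(s)$ and then invoke the injectivity from Lemma~\ref{lm:g_n_psi}. But that lemma only asserts injectivity of $g_n$ on $\Pi_n(a)$, and the real point $\varphi_0$ (or its symmetric partner $2\pi-\varphi_0$) need not lie in $\Pi_n(a)$: by the definition~(\ref{p_n_a_set}), $\Pi_n(a)$ excludes a band of width $c/n$ around $\varphi=0$ and $\varphi=\pi$, and nothing you have said rules out $\varphi_0$ landing in that excluded band. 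In that case Lemma~\ref{lm:g_n_psi} gives no contradiction. The route the paper actually takes to guarantee $b_n(\cdot,s)\neq 0$ on $\mathbb{T}\times\Pi_n(a)$ is a perturbation argument: the compactness lower bound~(\ref{eq:inf_b_n}) for $|b(\cdot,s_0)|$ on $\mathbb{T}\times[0,\pi]$ together with the closeness of $b_n(\cdot,s)$ to $b(\cdot,\Re s)$ furnished by Lemma~\ref{lm:d_b_n} yields the uniform bound~(\ref{eq:inf_3}) for all $n\geq N_0$, which is precisely the non-vanishing you need. Substituting that perturbation step for your injectivity appeal closes the gap; the remainder of your proof stands.
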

\begin{proof}
For any  $s_0 \in (0,\pi)$ the function  $b(t,s_0) \ne 0$.
In addition, due to the symmetry $b(e^{\mathrm {i} \varphi}, s_0) = b (e^{\mathrm{i}(2 \pi- \varphi)}, s_0) $ we can see that the image of $b(e^{\mathrm{i} \varphi}, s_0) $, $\varphi \in [0, 2\pi]$ represents a curve without interior, described twice: once and back,
when $\varphi$ describe the segments $[0, \pi]$ and $[\pi, 2\pi]$, respectively. Thus, the first relation in the formulation of Lemma \ref{lm:wind_b_n} is proved.
The second is proved similarly.
\end{proof}

Let us now consider  the sequence of functions  $\eta_n(s) $ of the type (\ref{eq:theta_n_def})- (\ref{eq:teta_n_lambda}), (\ref{eq:eta_n_s}) and compare it with the limit function (\ref{eq:teta_lambda})-(\ref{eq:eta_s_def})
\begin{equation}
    \label{eq:eta_s0_def}
    \eta(s_0) = \frac{1}{2 \pi \mathrm{i}} \int_{\mathbb{T}}
    \frac{\log b(\tau, s_0)}{\tau - e^{\mathrm{i}s_0}}d\tau
    -\frac{1}{2 \pi \mathrm{i}} \int_{\mathbb{T}}
    \frac{\log b(\tau, -s_0)}{\tau - e^{-\mathrm{i}s_0}}d\tau,
    \quad s_0 \in (0,\pi) .
\end{equation}

It's obvious that $b(\tau,s_0) \ne 0$ for $\tau \in \mathbb{T}$ and $s_0 \in [0,\pi]$.
Since  $\mathbb{T} \times [0,\pi]$ is a compact set we have
\begin{equation}
    \label{eq:inf_b_n}
    \inf_{s_0 \in [0,\pi]} \inf_{t \in \mathbb{T}} |b(t, s_0)| = \Delta > 0 .
\end{equation}
Thus, according to Lemma \ref{lm:d_b_n}, there is such a large enough natural number $N_0$, that
\begin{equation}
    \label{eq:inf_3}
    \inf_{n \ge N_0} \inf_{s \in \Pi_n(a)} \inf_{t \in \mathbb{T}} |b_n(t, s)| \ge \frac{\Delta}{2}.
\end{equation}

To analyze the functions $\eta_n(s)$, $\eta(s)$, we need generalized Hölder classes.
We say that $f(t) \in H^\mu(K)$, $0 < \mu \le 1$ where $K$ is a compact domain of the complex
plane $\mathbb{C}$, if the following condition is satisfied:
\[
\| f \|_{H^\mu} := \sup_{t \in K} |f(t)| + \sup_{t_1,\,t_2 \in K}
\frac{|f(t_2) - f(t_1)|}{|t_2 - t_1|^\mu} < \infty .
\]
We define the class  $\mathcal{C}^{m + \mu}(K)$. Let us say that $f(z) \in \mathcal{C}^{m + \mu}(K)$, $m=0, 1, 2, \ldots$,
$0 < \mu \le 1$ if $f^{(m)}(z) \in H^\mu(K)$.
Moreover, the norm of the function $f(t)$ in this space is introduced by the formula:
\[
\| f \|_{\mathcal{C}^{m + \mu}(K)} = \sum_{k=0}^{m-1} \sup_{t \in K} |f^{(k)}(t)| + \|f^{(m)}(t) \|_{H^\mu} .
\]
Note that  $\mathcal{C}^{0 + \mu}(K) = H^\mu(K)$. We also agree that $C^0 := W^0$.
In the following, we use the following known result. (see \cite{BBGM15},  Lemma 3.6).
\begin{lm}
\label{lm:f_in_c}
Let $f(t) \in W^\alpha$, $\alpha > 0$, then $f(t) \in \mathcal{C}^{m + \mu}(\mathbb{T})$, where $m = [\alpha]$, $\mu = \alpha - [\alpha]$.
\end{lm}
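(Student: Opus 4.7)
The plan is to work directly with the Fourier expansion $f(t) = \sum_{j=-\infty}^{\infty} f_j t^j$ and to use the hypothesis $\sum_j |f_j|(|j|+1)^\alpha < \infty$ twice: first to get continuous derivatives up to order $m$, then to get Hölder continuity of the top derivative.

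\emph{Step 1 (differentiability).} By Lemma~\ref{lm:f_w_alpha}\,ii), for every integer $k \le m$ we have $\sum_j |f_j||j|^k \le \|f\|_\alpha < \infty$. Hence the formally differentiated series $(d/d\varphi)^k f(e^{\mathrm{i}\varphi}) = \sum_j f_j (\mathrm{i}j)^k e^{\mathrm{i}j\varphi}$ converges absolutely and uniformly in $\varphi$ by the Weierstrass $M$-test, so $f^{(k)}$ exists and is continuous on $\mathbb{T}$ for $k = 0, 1, \ldots, m$.

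\emph{Step 2 (Hölder estimate for $f^{(m)}$).} I would base this on the elementary inequality $|e^{\mathrm{i}x} - 1| \le 2^{1-\mu}|x|^\mu$, valid for every $x \in \mathbb{R}$ and every $\mu \in [0,1]$ (checked separately on $|x| \le 2$ and $|x| > 2$). Setting $h = \varphi_1 - \varphi_2$, term-by-term subtraction of the series for $f^{(m)}$ at $e^{\mathrm{i}\varphi_1}$ and $e^{\mathrm{i}\varphi_2}$ gives
$$\bigl|f^{(m)}(e^{\mathrm{i}\varphi_1}) - f^{(m)}(e^{\mathrm{i}\varphi_2})\bigr| \le \sum_j |f_j||j|^m \cdot 2^{1-\mu}|jh|^\mu = 2^{1-\mu}|h|^\mu \sum_j |f_j||j|^{m+\mu}.$$
Since $m+\mu = \alpha$, the last sum is majorised by $\|f\|_\alpha$.

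\emph{Step 3 (conversion to the chord distance).} On $\mathbb{T}$ the chord distance $|e^{\mathrm{i}\varphi_1} - e^{\mathrm{i}\varphi_2}|$ and the angular distance $|\varphi_1 - \varphi_2|$ are comparable up to a universal constant, so Step 2 yields $f^{(m)} \in H^\mu(\mathbb{T})$ and therefore $f \in \mathcal{C}^{m+\mu}(\mathbb{T})$, with $\|f\|_{\mathcal{C}^{m+\mu}(\mathbb{T})} \le \mathrm{const}\cdot \|f\|_\alpha$.

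I do not foresee a serious obstacle here; the only point needing a little care is the boundary case where $\alpha$ is an integer and so $\mu = 0$, in which case the claim reduces, via the convention $\mathcal{C}^0 := W^0$ stated just before the lemma, to the continuity of $f^{(m)}$ already established in Step~1.
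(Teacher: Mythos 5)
Your proof is correct. Note that the paper itself does not prove this lemma at all: it is quoted as a known result with a reference to \cite{BBGM15}, Lemma 3.6, so there is no in-paper argument to compare against. Your self-contained derivation is the standard one and all three steps check out: absolute convergence of the $k$-times differentiated Fourier series for $k\le m$ follows from $\sum_j |f_j||j|^k\le\|f\|_\alpha$ (this is exactly the mechanism behind Lemma~\ref{lm:f_w_alpha}\,ii)); the elementary bound $|e^{\mathrm{i}x}-1|\le 2^{1-\mu}|x|^\mu$ is valid for all real $x$ and $\mu\in[0,1]$ (checked on $|x|\le 2$ and $|x|>2$ as you say); and the resulting sum $\sum_j|f_j||j|^{m+\mu}$ is indeed controlled by $\|f\|_\alpha$ since $m+\mu=\alpha$. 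The comparability of chord and arc length on $\mathbb{T}$ (via $|e^{\mathrm{i}\varphi_1}-e^{\mathrm{i}\varphi_2}|=2|\sin((\varphi_1-\varphi_2)/2)|\ge\frac{2}{\pi}|\varphi_1-\varphi_2|$ for the shorter arc) disposes of Step 3, and your remark on the integer-$\alpha$ case $\mu=0$ correctly identifies the only definitional subtlety, which the paper's convention $\mathcal{C}^0:=W^0$ is meant to cover. The only cosmetic point: the paper's class $\mathcal{C}^{m+\mu}$ is phrased in terms of $f^{(m)}(t)$ (derivative in $t$) while you differentiate in $\varphi$; these differ by the smooth non-vanishing factor $(\mathrm{i}t)^k$ on $\mathbb{T}$, so the Hölder classes coincide and nothing is lost.
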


Introduce the following notation:
\[
\Lambda(t,s_0) := \frac{1}{2 \pi \mathrm{i}} \int_{\mathbb{T}}
    \frac{\log b(\tau, s_0)}{\tau - t}d\tau,
    \qquad s_0 \in (0,\pi)
\]
and
\[
\Lambda_n(t, s) := \frac{1}{2 \pi \mathrm{i}} \int_{\mathbb{T}}
    \frac{\log b_n(\tau, s)}{\tau - t}d\tau,
    \qquad s \in \Pi_n(a).
\]
Then we get that
\[
\eta(s_0) = \Lambda(e^{\mathrm{i} s_0}, s_0) - \Lambda(e^{-\mathrm{i} s_0}, s_0),
\qquad s_0 \in (0,\pi),
\]
and
\[
\eta_n(s) = \Lambda_n(e^{\mathrm{i} s}, s) - \Lambda_n(e^{-\mathrm{i} s}, s),
\qquad s \in \Pi_n(a).
\]

\begin{lm}
\label{lm:eta_props3}
Let the function $a(t) \in \mathsf{CSL}^{\alpha}$, $\alpha \geq 2$, then
\begin{enumerate}[i)]
\item $\eta(s_0) \in \mathcal{C}^{m+\mu}([0,\pi])$, where $m=[\alpha - 2]$, $\mu=\alpha-2-m$;
\item the point $s = s_0 + \mathrm{i} \delta \in \Pi_n(a)$ allows following representation 
\[
\eta_n(s) = \eta(s_0) + \sum_{k=1}^{m} \eta^{(k)}(s_0)(\mathrm{i} \delta)^{k}
+ \sum_{k=1}^{m+1} \gamma_{n,k}(s_0) (\mathrm{i} \delta)^{k}
\]
where
\[
|\gamma_{n,k}(s_0)| = o(n^{k-(\alpha - 2)}), \qquad k = 0, 1, \ldots, m,
\]
\[
|\gamma_{n,m+1}(s_0)| = O(n^{m+1-(\alpha - 2)})
\]
\end{enumerate}
\end{lm}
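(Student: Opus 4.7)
My plan is to prove (i) by transferring the smoothness of $b(t, s_0)$ from Lemma~\ref{lm:b_in_w} through $\log$ and the Cauchy principal-value integral, and (ii) by combining the analyticity of $\eta_n$ on $\Pi_n(a)$ with a quantitative comparison with $\eta$ on the real interval.

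For (i), Lemma~\ref{lm:b_in_w} gives $b(\cdot,s_0)\in W^{\alpha-2}$ and $b(t,\cdot)\in W^{\alpha-2}$ uniformly in the other argument, while (\ref{eq:inf_b_n}) together with the vanishing winding $\mathrm{wind}\,b(\cdot,s_0)=0$ from Lemma~\ref{lm:wind_b_n} allows us to choose a single-valued branch of $\log b$ that still lies in $W^{\alpha-2}$ (the Wiener algebra is stable under composition with functions holomorphic on an open set containing the image). Lemma~\ref{lm:f_in_c} then places $\log b(\cdot,s_0)\in\mathcal{C}^{m+\mu}(\mathbb{T})$ uniformly in $s_0$, and Privalov's theorem on boundedness of the singular Cauchy operator on Hölder classes yields $\Lambda(\cdot,s_0)\in\mathcal{C}^{m+\mu}(\mathbb{T})$ uniformly. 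Together with the $\mathcal{C}^{m+\mu}$-regularity of $\log b(\tau,\cdot)$ in its second argument, we can differentiate $\eta(s_0)=\Lambda(e^{is_0},s_0)-\Lambda(e^{-is_0},s_0)$ up to order $m$ with a Hölder bound on the top derivative, proving $\eta\in\mathcal{C}^{m+\mu}([0,\pi])$.

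For (ii), the function $\eta_n$ is analytic on $\Pi_n(a)$: the Wiener--Hopf factorization $\hat b_n(t,\lambda)=\hat b_{n,+}(t,\lambda)\,\hat b_{n,+}(t^{-1},\lambda)$ with $\hat b_{n,+}$ a non-vanishing polynomial in $t$, together with the definitions (\ref{eq:theta_n_def}) and (\ref{eq:eta_n_s}), exhibits $\eta_n(s)$ as a composition of polynomials, rational functions and a fixed branch of $\log$. An ordinary Taylor expansion at $s_0$ therefore gives
\[
\eta_n(s_0+i\delta)=\sum_{k=0}^{m}\frac{\eta_n^{(k)}(s_0)}{k!}(i\delta)^k+\frac{\eta_n^{(m+1)}(\tilde s)}{(m+1)!}(i\delta)^{m+1}
\]
for some $\tilde s$ on the segment from $s_0$ to $s_0+i\delta$. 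The proof then reduces to (a) $\eta_n^{(k)}(s_0)=\eta^{(k)}(s_0)+o(n^{k-(\alpha-2)})$ uniformly in $s_0\in[cn^{-1},\pi-cn^{-1}]$ for $k=0,1,\ldots,m$, and (b) $\eta_n^{(m+1)}(\tilde s)=O(n^{(m+1)-(\alpha-2)})$ uniformly on $\Pi_n(a)$. For (a), I would differentiate under the integral in (\ref{eq:teta_n_lambda}), write $\log b_n=\log b+\log(1+(b_n-b)/b)$, and bound the perturbation using (\ref{eq:b_diff_norm_1}) and (\ref{eq:3_16}) together with Privalov's theorem; the derivative estimates (\ref{eq:3_14})--(\ref{eq:3_16}) propagate through $k$-fold differentiation uniformly in $s_0$. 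For (b), the higher-derivative estimate (\ref{eq:3_17}) of Lemma~\ref{lm:b_in_w}(iii) yields the required growth once transported through $\log$ and the Cauchy integral. Absorbing the residual differences and the Taylor remainder into the coefficients $\gamma_{n,k}(s_0)$ (with the factorial convention of the statement understood) completes the argument.

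The main obstacle will be to justify the differentiation under the integral sign rigorously near the diagonal, where $1/(\tau-e^{\pm is})$ becomes singular as $\delta\to 0$: one isolates the principal-value contribution and invokes the Plemelj--Sokhotski jump formulas to match inner and outer limits of $\Lambda_n$. Uniformity of all constants in $s_0$ and $n$ is a second technical point, but it is built into the uniform lower bound (\ref{eq:inf_3}) and the uniform Wiener-algebra estimates of Lemma~\ref{lm:b_in_w}, which have been stated precisely in this uniform form for this purpose.
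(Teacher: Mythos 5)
Your outline matches the paper's proof at every structural step: winding number zero from Lemma~\ref{lm:wind_b_n} together with the lower bound (\ref{eq:inf_b_n}) gives a continuous branch of $\log b$ that inherits the $W^{\alpha-2}$ smoothness; the Cauchy singular integral preserves this class; the mixed derivatives of $\Lambda(t,s_0)$ are assembled into $\eta^{(k)}(s_0)$ by the chain rule; the approximation $\eta_n^{(k)}(s_0)=\eta^{(k)}(s_0)+\gamma_{n,k}(s_0)$ with $|\gamma_{n,k}|=o(n^{k-(\alpha-2)})$ comes from the difference estimates of Lemma~\ref{lm:b_in_w}; and the imaginary shift is handled by a Taylor expansion of the analytic function $\eta_n(s_0+i\delta)$ whose remainder is controlled by the $(m+1)$-th derivative bound from Lemma~\ref{lm:b_in_w}(iii). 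This is the same route.

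One small technical divergence is worth noting. You frame ``differentiation under the integral near the diagonal'' as the main obstacle and propose to resolve it with Plemelj--Sokhotski matching; the paper avoids this entirely by integrating by parts $p$ times to move the $t$-derivatives off the Cauchy kernel $1/(\tau-t)$ and onto $\partial_\tau^p\partial_{s_0}^\ell\log b(\tau,s_0)$, yielding
\[
\frac{\partial^{p+\ell}}{\partial^p t\,\partial^\ell s_0}\Lambda(t,s_0)
=\frac{1}{2\pi\mathrm{i}}\int_{\mathbb{T}}\frac{\partial_\tau^p\partial_{s_0}^\ell\log b(\tau,s_0)}{\tau-t}\,d\tau,
\]
so one never differentiates the singular kernel at all. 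Also, the paper bounds the singular integral operator directly on $W^{\alpha-2}$ (where it is bounded as a Wiener-algebra operator) and only passes to the H\"older scale via Lemma~\ref{lm:f_in_c} at the end, rather than applying Privalov's theorem immediately in the H\"older scale; either works, but staying in $W^{\alpha-2}$ keeps the uniform estimates in the family $\{b_n\}$ cleaner. These are cosmetic differences; your proof strategy is essentially the paper's.
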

\begin{proof}
According to Lemma \ref{lm:wind_b_n}, for any $s_0 \in [0, \pi]$ we can choose a continuous branch of the  $\log b(\tau, s_0)$,
moreover this function will be continuous in $s_0 \in [0, \pi]$.
Further, according to the well-known theorem of the theory of Banach algebras
$\log b(\cdot, s_0) \in W^{\alpha-2}$ since $b(\cdot, s_0) \in W^{\alpha-2}$ and the relation (\ref{eq:inf_b_n}) is satisfied, and the norm  $\| b(\cdot, s_0) \|_{\alpha-2}$ is continuous in  $s_0$.
Note that the function $\Lambda(t,s_0)$ also has these properties, since the Cauchy singular integral operator involved in the definition of the function $\Lambda(t,s)$ is bounded in the space $W^{\alpha-2}$.

This implies that the function $\Lambda(t, s_0)$ has partial derivatives with respect to $t$
(with a fixed $s_0$), up to the order of $m$ and
$\dfrac{\partial^m}{\partial^m t} \Lambda(\cdot,s_0) \in H^\mu (\mathbb{T})$.
On the other hand, $\Lambda(t,s_0)$ has continuous partial derivatives with respect to $s_0$. Indeed:
\[
\frac{\partial^{p+\ell}}{\partial^p t \partial^\ell s_0} \Lambda(t,s_0) =
\frac{(p-1)!}{2 \pi \mathrm{i}} \int_{\mathbb{T}}
\frac{ \frac{\partial^\ell}{\partial^\ell s_0}\log b(\tau, s_0)}{(\tau - t)^p}d\tau .
\]
Applying integration by parts $p$ times to the last integral, we obtain 
\[
\frac{\partial^{p+\ell}}{\partial^p t \partial^\ell s_0} \Lambda(t,s_0) =
\frac{1}{2 \pi \mathrm{i}} \int_{\mathbb{T}}
\frac{ \frac{\partial^{p+\ell}}{\partial^p \tau \partial^\ell s_0}\log b(\tau, s_0)}
{\tau - t}d\tau .
\]
Thus, we obtain that if $p+\ell \le m$ ($m=[\alpha - 2]$), then
\[
\frac{\partial^{p+\ell}}{\partial^p t \partial^\ell s_0} \Lambda(\cdot,s_0) \in \mathcal{C}^{\alpha-2-p-\ell}(\mathbb{T})
\quad \text{and} \quad
\frac{\partial^{p+\ell}}{\partial^p t \partial^\ell s_0} \Lambda(t, \cdot) \in \mathcal{C}^{\alpha-2-p-\ell}([0,\pi]) .
\]
From the last two relations it follows that for $k=0, 1, \ldots, m$ we obtain:
\[
\frac{\partial^k \eta(s_0)}{\partial^k s_0} \in \mathcal{C}^{\alpha-2-k}([0,\pi]) .
\]
Indeed, taking for example, $k=2$, we get
\begin{align*}
\frac{\partial^2 \eta(s_0)}{\partial^2 s_0} &= \left( \left. \frac{\partial^2}{\partial^2 t}
 \Lambda(t,s_0) \right\vert_{t=e^{\mathrm{i} s_0}}
- \left. \frac{\partial^2}{\partial^2 t}
 \Lambda(t^{-1}, s_0) \right\vert_{t=e^{-\mathrm{i} s_0}}
\right)
\\ &+ 2 \left( \left. \frac{\partial^2}{\partial t \partial s_0}
 \Lambda(t,s_0) \right\vert_{t=e^{\mathrm{i} s_0}}
- \left. \frac{\partial^2}{\partial t \partial s_0}
 \Lambda(t^{-1},s_0) \right\vert_{t=e^{-\mathrm{i} s_0}}
\right)
\\ &+ \left( \left. \frac{\partial^2}{\partial^2 s_0}
 \Lambda(t,s_0) \right\vert_{t=e^{\mathrm{i} s_0}}
- \left. \frac{\partial^2}{\partial^2 s_0}
 \Lambda(t^{-1},s_0) \right\vert_{t=e^{-\mathrm{i} s_0}}
\right) .
\end{align*}
Similarly
\[
\frac{\partial^k \eta_n(s_0)}{\partial^k s_0} \in C^{\alpha-2-k}([0,\pi]),
\]
moreover
\begin{equation}
    \label{eq:d_k_eta_norm}
    \left\| \frac{\partial^k \eta(s_0)}{\partial^k s_0} - \frac{\partial^k \eta_n(s_0)}
    {\partial^k s_0}
    \right\|_{H^{\alpha-2-k}([0,\pi])}
    \le \mathrm{const}\, n^{-(\alpha-2-k)} ,
\end{equation}
where the ``const'' is independent of $n$.

Let now  $s = s_0 + \mathrm{i} \delta$, then
\begin{equation}
    \label{eq:eta_n_s_series}
    \eta_n(s) = \eta_n(s_0) + \sum_{k=1}^{m} \frac{\eta_n^{(k)}(s)}{k!} (\mathrm{i} \delta)^k
    + o_{m+1}(s_0, \delta) ,
\end{equation}
where $m=[\alpha - 2]$ and
\begin{equation}
    \label{eq:p_m_1_s_abs}
    |o_{m+1}(s_0, \delta)| \le \mathrm{const}\, \left| \eta_n^{(m+1)}(s_0) \right|\cdot |\mathrm{i} \delta|^{m+1} .
\end{equation}
According to (\ref{eq:d_k_eta_norm}) we have
\[
\eta_n^{(k)}(s_0) = \eta^{(k)}(s_0) + \gamma_{n,k}(s_0),
\]
where $|\gamma_{n,k}(s_0)| = o(n^{k-\alpha+2})$.
To estimate $o_{m+1}(\delta)$, we use an inequality of the form:
\[
\left\| \frac{d^{k+\ell} \left( \ln b(\cdot, s_0) \right)}{d^k t d^\ell s_0}  \right\|_0
\le \mathrm{const}\,  n^{(m+1)- \alpha + 2 - p - \ell } ,
\]
where  $m+1 = k + \ell > \alpha-2$  and ``const'' does not depend on $n$. Then it is easy to understand that
\[
|\eta^{(m+1)}_n(s_0) | \le \mathrm{const}\,n^{(m+1) - (\alpha-2)} .
\]
Thus, the Lemma  \ref{lm:eta_props3} is proved.
\end{proof}

\section{Equation for the eigenvalues}

In this section we derive an equation for finding the eigenvalues of the considered
Toeplitz matrices and subject this equation to asymptotic analysis when the parameter
$n \to \infty$. 
So, we consider the standard equation for finding eigenvalues and eigenvectors:
\begin{equation}
    \label{eq:t_n_x_n}
    T_n(a_n-\lambda)X_n=0, \qquad \lambda \in \mathcal{R}_n(a)
\end{equation}
in the space $L_2^{(n)}$.
Let us present the expression $a_n(t)-\lambda$ as a product $p(t,\lambda) \hat{b}_n(\cdot,\lambda)$,
where $\hat{b}_n(\cdot,\lambda)$
is the continuous non-degenerate zero index function and $p(t,\lambda)$ 
is a Laurent polynomial with three terms,
which inherits the zeros of the original function $a_n(t)-\lambda$.
Further, through some transformations we reduce (\ref{eq:t_n_x_n}) to an equation with an invertible operator
$T_{n+2}(\hat{b}_n(\cdot,\lambda))$ on the left-hand side.
By applying the operator $T^{-1}_{n+2}(\hat{b}_n(\cdot,\lambda))$ to this equation 
and considering the zeros of $p(t,\lambda)$ 
we get a homogeneous system of linear equations, the main determinant of which gives us the above-mentioned equation for finding eigenvalues.

We first prove the following result.
\begin{lm}
\label{lm:t_n_2_inv}
Let $a\in \mathsf{CSL}^\alpha$, $\alpha \ge 2$, then there is such a natural $N_0$,
independent of $\lambda \in \mathcal{R}_n(a)$ so that for all $\lambda \in \mathcal{R}_n(a)$ 
and for all $n \geq N_0$ the operator
$T_{n+2}(\hat{b}_n(\cdot,\lambda))$ is invertible and besides
\begin{equation}
    \label{eq:norm_t_n_m1}
    \left\| T^{-1}_{n+2} (\hat{b}_n(\cdot,\lambda)) \right\|_{L_2} \le M,
\end{equation}
where $M$ do not depend on $n$ and $\lambda \in \mathcal{R}_n(a)$.
\end{lm}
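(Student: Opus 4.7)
The plan is to combine the symmetric Wiener--Hopf factorization of $\hat{b}_n(\cdot,\lambda)$ already announced before (\ref{eq:theta_n_def}) with a compactness argument in the symbol variable to extract uniform finite-section invertibility. All the ingredients have been prepared: from (\ref{eq:inf_3}) the symbol satisfies $|\hat{b}_n(t,\lambda)|\geq \Delta/2$ uniformly on $\mathbb{T}\times\mathcal{R}_n(a)$; by Lemma~\ref{lm:wind_b_n} its winding number vanishes; by (\ref{eq:b_n_norm_1}) it is uniformly bounded in $W^{\alpha-2}$; and the identity $a_j=a_{-j}$ together with (\ref{eq:b_n_hat_def}) forces the palindromic symmetry $\hat{b}_n(t,\lambda)=\hat{b}_n(t^{-1},\lambda)$.

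First I would construct the factorization $\hat{b}_n(t,\lambda)=\hat{b}_{n,+}(t,\lambda)\,\hat{b}_{n,+}(t^{-1},\lambda)$ explicitly: $t^{n-2}\hat{b}_n(t,\lambda)$ is a palindromic polynomial of degree $2(n-2)$ with no roots on $\mathbb{T}$, so its roots come in pairs $\{z,1/z\}$ with $|z|\neq 1$, and collecting the roots of modulus $>1$ defines $\hat{b}_{n,+}(t,\lambda)$ as a polynomial of degree $n-2$ that, together with its reciprocal, lies in $H^{\infty}$. Uniform bounds on $\|\hat{b}_{n,+}^{\pm 1}\|_{W^{0}}$ follow from the standard fact that the Wiener--Hopf factorization (logarithm, Riesz projection, exponential) is continuous on the open set of non-vanishing $W^{\alpha-2}$ functions, applied to the equicontinuous family $\{\hat{b}_n(\cdot,\lambda)\}$.

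With the factorization in hand, Widom's identity $T(fg)=T(f)T(g)+H(f)H(\tilde g)$ applied with $f=\hat{b}_{n,+}(t^{-1},\lambda)\in H^{\infty}_{-}$ gives $H(f)=0$ and hence the exact triangular factorization $T(\hat{b}_n)=T(f)\,T(\hat{b}_{n,+}(t,\lambda))$ on $\ell^{2}(\mathbb{Z}_{+})$; inverting the two triangular Toeplitz factors yields a uniformly bounded inverse on the infinite-dimensional space. Transferring this to the finite section $T_{n+2}(\hat{b}_n)$ is the main obstacle: for each fixed $(n,\lambda)$ the symmetry $\tilde{\hat{b}_n}=\hat{b}_n$ puts us in the scope of the Gohberg--Feldman/Böttcher--Silbermann stability theorem for the finite-section method, so $T_{N}(\hat{b}_n(\cdot,\lambda))$ is invertible for large $N$ with bounded inverse; but a priori the bound and the threshold $N_0$ may depend on $(n,\lambda)$.

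The hardest part will be upgrading this to a bound uniform over the two-parameter family indexed by $n$ and $\lambda\in\mathcal{R}_n(a)$, since the matrix dimension $N=n+2$ also varies. I would argue by contradiction and compactness: assume $\|T_{n_k+2}(\hat{b}_{n_k}(\cdot,\lambda_k))^{-1}\|\to\infty$ along a subsequence; by Arzelà--Ascoli the uniform $W^{\alpha-2}$ bound yields a sub-subsequence converging in $C(\mathbb{T})$ to some $c_{\infty}$ that is continuous, non-vanishing, of winding zero and symmetric; applying the stability theorem to $\{T_{N}(c_{\infty})\}_{N\geq 1}$ together with the continuous dependence of finite-section inverses on their symbol in $C(\mathbb{T})$-norm (and the fact that $n_{k}+2\to\infty$) contradicts the unboundedness, delivering the uniform constant $M$ required in (\ref{eq:norm_t_n_m1}).
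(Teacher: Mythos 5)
Your overall strategy — reduce to a compactness/contradiction argument over the two-parameter family — is a legitimate alternative to what the paper does, but the central step has a gap at the endpoint $\alpha=2$. You invoke Arzel\`a--Ascoli based on the uniform $W^{\alpha-2}$ bound from (\ref{eq:b_n_norm_1}) to extract a $C(\mathbb{T})$-convergent subsequence of $\hat{b}_{n_k}(\cdot,\lambda_k)$. When $\alpha>2$ this is fine, since $W^{\alpha-2}$ embeds in a H\"older space and bounded sets there are equicontinuous. But when $\alpha=2$ the bound is only in $W^0$, and $W^0$-boundedness does not give equicontinuity (the family $\{t^k\}_k$ is bounded in $W^0$ with no $C(\mathbb{T})$-convergent subsequence), so Arzel\`a--Ascoli does not apply as stated. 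Consequently the contradiction limit $c_\infty$ is not obtained, and the whole uniformity argument collapses precisely in the borderline case $\alpha=2$, which the lemma is required to cover.

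The paper avoids this by a cleaner split. It first observes that the \emph{limit} family $\{\hat{b}(\cdot,\lambda_0):\lambda_0\in\mathcal{R}(a)\}$ (not the approximating one) is a continuous image of the compact set $\mathcal{R}(a)$, so compactness in $C(\mathbb{T})$ is automatic without any smoothness beyond continuity; the finite-section theorem then supplies a single threshold $N_1$ and bound $M/2$ for $T_{n+2}(\hat{b}(\cdot,\lambda_0))$, uniformly in $\lambda_0$. In the second step it replaces $\hat{b}(\cdot,\lambda_0)$ by $\hat{b}_n(\cdot,\lambda)$ using the uniform approximation of Lemma~\ref{lm:d_b_n}, $\|\hat{b}(\cdot,\lambda_0)-\hat{b}_n(\cdot,\lambda)\|_0\to 0$ uniformly, and a Neumann-series perturbation of the inverse. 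You could patch your argument by taking this route: instead of applying Arzel\`a--Ascoli directly to $\hat{b}_{n_k}(\cdot,\lambda_k)$, first use Lemma~\ref{lm:d_b_n} to reduce to $\hat{b}(\cdot,\lambda_{0,k})$ with $\lambda_{0,k}\in\mathcal{R}(a)$, pass to a convergent subsequence $\lambda_{0,k}\to\lambda_{0,\infty}$ and set $c_\infty=\hat{b}(\cdot,\lambda_{0,\infty})$; the rest of your contradiction then goes through. Finally, the first two paragraphs of your proposal (explicit root-pairing Wiener--Hopf factorization and the Widom identity) are never used in the contradiction step and can be dropped; the finite-section stability theorem for continuous non-vanishing symbols with $\mathrm{wind}=0$ and the palindromic symmetry $\hat{b}_n(t,\lambda)=\hat{b}_n(t^{-1},\lambda)$ already give everything required.
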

\begin{proof}
According to Lemma \ref{lm:wind_b_n}, $\mathrm{wind}\, \hat{b}(\cdot,\lambda_0) = 0$ for all $\lambda_0 \in \mathcal{R}_n(a)$.
Thus the finite section method (see, for  example \cite{Uni}) ensures the existence of a natural number $N_1(\lambda_0)$ such that for $n \ge N_1(\lambda_0)$ the operator
$T_{n}(\hat{b}(\cdot,\lambda_0))$ is invertible.
Since the set $\mathcal{R}(a)$ is compact, then, for all $\lambda_0$ from $\mathcal{R}(a)$, 
we can choose a single number
$N_1 = \sup_{\lambda_0 \in \mathcal{R}(a)} N_1(\lambda) < \infty$ such that for $n > N_1$
\begin{equation}
    \label{eq:norm_t_n_2_m1}
    \left\| T^{-1}_{n+2} (b(\cdot,\lambda_0)) \right\|_{L_2} \le \frac{M}{2},
\end{equation}
where $M$ does not depend on $n$ and $\lambda_0$.

According to the Lemma \ref{lm:d_b_n} for all small enough $\varepsilon > 0$ there is a number
$N_2$ such that for $n > N_2$
\begin{equation}
    \label{eq:norm_b_diff_0}
    \| \hat{b}(\cdot,\lambda_0) - \hat{b}_n(\cdot,\lambda) \|_0 < \varepsilon ,
\end{equation}
where $\lambda = g(s)$, $\lambda_0 = g(s_0)$, $s=s_0 + \mathrm{i} \delta$ ($\in \Pi_n(a)$),
and the numbers $\varepsilon$ and $N_2$ does not depend on $s_0$ and $\delta$.
Obviously (\ref{eq:norm_t_n_2_m1}) and (\ref{eq:norm_b_diff_0}) imply (\ref{eq:norm_t_n_m1}), where $N_0 = \max (N_1, N_2)$.
\end{proof}

We state the main result of this section.
Let $\lambda = g_n(s)$, $s \in \Pi_n(a)$. Then the following theorem holds.
\begin{thm}
\label{thm:4_2}
Let $a \in \mathsf{CSL}^{\alpha}$, $\alpha \ge 2$, and $n > N_0$ where $N_0$
is a rather large natural number.
Then $\lambda \in \mathcal{R}_n(a)$ is an eigenvalue of $T_n(a)$
if and only if
\begin{equation}
    \label{eq:4_4a}
    e^{-\mathrm{i}(n+1)s} \Theta_{n+2}\left( e^{\mathrm{i}s}, \lambda \right)
    \hat{\Theta}_{n+2}\left( e^{-\mathrm{i}s}, \lambda \right)
    - e^{\mathrm{i}(n+1)s} \Theta_{n+2}\left( e^{-\mathrm{i}s}, \lambda \right)
    \hat{\Theta}_{n+2}\left( e^{\mathrm{i}s}, \lambda \right) = 0 ,
\end{equation}
where the functions $\Theta_{n+2}$ and $\hat{\Theta}_{n+2}$ are defined by the formulas:
\[
\Theta_{n+2}(t, \lambda) = T^{-1}_{n+2}\left( \hat{b}_n(\cdot,\lambda)\chi_0\right)(t)
; \qquad
\hat{\Theta}_{n+2}(t, \lambda) = T^{-1}_{n+2}\left( \tilde{b}_n(\cdot,\lambda)\chi_0\right)(t^{-1}) ,
\]
and $\tilde{b}_n(t,\lambda) = \hat{b}_n(1/t, \lambda)$.
\end{thm}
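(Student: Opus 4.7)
The plan is to use the factorization
\[
a_n(t)-\lambda \;=\; p(t,\lambda)\,\hat{b}_n(t,\lambda), \qquad p(t,\lambda) := 2\cos s - t - t^{-1},
\]
with $s=\varphi_{1,n}(\lambda)$ (so that $p$ vanishes precisely at $t=e^{\pm \mathrm{i}s}$, the two preimages of $\lambda$ under $a_n$ on $\mathbb{T}$), to convert $T_n(a_n-\lambda)X_n=0$ into an inversion problem for the invertible operator $T_{n+2}(\hat b_n)$, and then read off a $2\times 2$ determinantal condition.

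Setting $Y:=\hat b_n X_n$ with Fourier coefficients $(y_m)$, the identity $(pY)_j = 2\cos s\cdot y_j - y_{j-1} - y_{j+1}$ turns $T_n(p\hat b_n)X_n=0$ into the three-term recurrence
\[
y_{j+1}-2\cos s\cdot y_j + y_{j-1}=0, \qquad j=0,1,\dots,n-1,
\]
whose characteristic roots are precisely $e^{\pm \mathrm{i}s}$. Thus the eigenvalue equation is equivalent to the existence of constants $A,B$ such that $y_m = A\,e^{\mathrm{i}ms} + B\,e^{-\mathrm{i}ms}$ for $m=-1,0,\dots,n$.

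Next I lift the problem to dimension $n+2$. The shift $X' := t\,X_n$ lies in $L_2^{(n+1)}\subset L_2^{(n+2)}$ and satisfies $(\hat b_n X')_m = y_{m-1}$, so the recurrence condition takes the form
\[
    T_{n+2}(\hat b_n)\,X' \;=\; G(A,B) \;:=\; A\,e^{-\mathrm{i}s}\!\sum_{m=0}^{n+1}(e^{\mathrm{i}s}t)^m + B\,e^{\mathrm{i}s}\!\sum_{m=0}^{n+1}(e^{-\mathrm{i}s}t)^m.
\]
By Lemma~\ref{lm:t_n_2_inv}, $T_{n+2}(\hat b_n)$ is invertible, so $X' = T_{n+2}^{-1}(\hat b_n)G(A,B)$ is uniquely determined by $(A,B)$. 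The requirement that $X'$ be of the form $tX_n$ with $X_n\in L_2^{(n)}$ is equivalent to the two linear constraints $X'_0=0$ and $X'_{n+1}=0$, reducing the eigenvalue equation to the singularity of a $2\times 2$ homogeneous system in $(A,B)$; nontriviality of $X_n$ when $(A,B)\ne 0$ is guaranteed by $s\notin\{0,\pi\}$ for $\lambda\in\mathcal R_n(a)$.

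To identify this determinantal condition with (\ref{eq:4_4a}), I use the bilinear pairing $(f,g):=\sum_m f_m g_m$ together with the transpose identity $T_{n+2}(\hat b_n)^{t} = T_{n+2}(\tilde b_n)$ and the reversal $R:L_2^{(n+2)}\to L_2^{(n+2)}$, $(Rf)(t):=t^{n+1}f(1/t)$, which intertwines $T_{n+2}(\hat b_n)$ with $T_{n+2}(\tilde b_n)$ and swaps $\chi_0\leftrightarrow\chi_{n+1}$. These yield $X'_0 = (G,\,T_{n+2}^{-1}(\tilde b_n)\chi_0)$ and $X'_{n+1} = (G,\,R\Theta_{n+2})$. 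Using $Q(e^{\pm \mathrm{i}s})=\hat\Theta_{n+2}(e^{\mp \mathrm{i}s})$ with $Q:=T_{n+2}^{-1}(\tilde b_n)\chi_0$, the two boundary conditions expand to
\[
    A\,e^{-\mathrm{i}s}\hat\Theta_{n+2}(e^{-\mathrm{i}s}) + B\,e^{\mathrm{i}s}\hat\Theta_{n+2}(e^{\mathrm{i}s}) = 0, \qquad A\,e^{\mathrm{i}ns}\Theta_{n+2}(e^{-\mathrm{i}s}) + B\,e^{-\mathrm{i}ns}\Theta_{n+2}(e^{\mathrm{i}s}) = 0,
\]
whose $2\times 2$ determinant is precisely (\ref{eq:4_4a}). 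The main bookkeeping obstacle I expect is tracking indices through the shift $X_n\mapsto tX_n$ (explaining why exactly two extra slots are needed, matched by the two zeros of $p$) and cleanly identifying the two boundary functionals with $\Theta_{n+2}$ and $\hat\Theta_{n+2}$ via the reflection $R$; the analytic input, uniform invertibility of $T_{n+2}(\hat b_n)$, has already been supplied by Lemma~\ref{lm:t_n_2_inv}, so everything beyond it is pure linear algebra.
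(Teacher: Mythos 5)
Your proof is correct, and it reaches the same determinantal condition as the paper but from a genuinely different angle. The paper also factors $a_n-\lambda = p\,\hat b_n$ and passes to dimension $n+2$, but it multiplies the eigenvector equation by $\chi_1$, observes that $Y_{n+2}:=P_{n+2}\bigl(\hat b_n p\,\chi_1 X_n\bigr)$ is forced to lie in $\operatorname{span}\{\chi_0,\chi_{n+1}\}$, inverts $T_{n+2}(\hat b_n)$ to express $t\,p\,X_n$ as $y_0\Theta_{n+2}+y_{n+1}t^{n+1}\hat\Theta_{n+2}$, and then obtains the $2\times 2$ system by evaluating this identity at the two zeros $e^{\pm\mathrm{i}s}$ of $p$. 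You instead make the three-term recurrence explicit, parametrize its solutions by the amplitudes $(A,B)$ associated to the roots $e^{\pm\mathrm{i}s}$, and impose the \emph{support} constraints $X'_0=X'_{n+1}=0$; you then extract the two boundary functionals via the bilinear transpose pairing $T_{n+2}(\hat b_n)^t=T_{n+2}(\tilde b_n)$ plus the reversal $R$, rather than by direct evaluation at $e^{\pm\mathrm{i}s}$ combined with the reflection identity $W_{n+2}T_{n+2}(\hat b_n)W_{n+2}=T_{n+2}(\tilde b_n)$ as the paper does. The two parametrizations are dual to each other: the paper's free variables $(y_0,y_{n+1})$ live on the image side of $T_{n+2}(\hat b_n)$, while yours live on the ``mode'' side. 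Your route buys an explicit, classical boundary-value-problem picture (solve the recurrence in the interior, impose two endpoint conditions), and it makes the converse direction cleaner since nondegeneracy reduces to $e^{\mathrm{i}s}\ne e^{-\mathrm{i}s}$; the paper's route is slightly shorter because it never names the recurrence and gets the system directly from $p(e^{\pm\mathrm{i}s})=0$. One cosmetic remark: with the paper's normalization (\ref{eq:b_n_hat_def}) of $\hat b_n$, your $p(t,\lambda)=2\cos s-t-t^{-1}$ is in fact exactly $(a_n-\lambda)/\hat b_n$, i.e.\ $e^{-\mathrm{i}s}(t-e^{\mathrm{i}s})(t^{-1}-e^{\mathrm{i}s})$, so the constant is consistent with the definition even though the paper's displayed $p$ omits the $e^{-\mathrm{i}s}$; in any case a nonzero constant factor does not affect either the recurrence or the vanishing of the determinant.
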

\begin{proof}
Consider the equation
\[
T_n(a-\lambda) X_n = 0, \quad X_n \in L_2^{(n)}.
\]
Rewrite this equation in the following form
\begin{equation}
\label{eq:main_t_n}
T_n(a_n - \lambda)X_n = 0, \qquad \lambda \in \mathcal{R}_n(a).
\end{equation}
By the (\ref{eq:b_n_hat_def}) the above equation can be rewritten as follows:
\begin{equation}
\label{eq:4_6}
P_n\hat{b}_n(\cdot, \lambda) p(\cdot, \lambda)X_n = 0,
\end{equation}
where
\[
p(t, \lambda) = \left(t - e^{\mathrm{i} s}\right) \left(t^{-1} - e^{\mathrm{i} s}\right).
\]
(Recall that $g_n(s)=\lambda$.)
Multiply equality (\ref{eq:main_t_n}) by the base vector $\chi_1=t$. 
We obtain
\begin{equation}
\label{eq:4_7}
(P_{n+1} - P_1 )\hat{b}(\cdot, \lambda)p(\cdot, \lambda)\chi_1 X_n =0.
\end{equation}

Note that $P_{n+1} - P_1$ is a finite dimensional orthogonal space projector from $L_2(\mathbb{T})$ to the linear shell of vectors $\chi_1, \chi_2, \ldots, \chi_n$, where $\chi_j=t^j$.
It is easy to see that $p(\cdot, \lambda) \chi_1 X_n \in L_2^{(n+2)}$.
We set by definition
\[
Y_{n+2} := P_{n+2}(a_n - \lambda)\chi_1 X_n
= P_{n+2}\hat{b}_n(\cdot, \lambda)(\chi_1 p(\cdot, \lambda)X_n)
= P_{n+2}(\hat{b}_n(\cdot, \lambda))(\chi_1 p(\cdot, \lambda)X_n).
\]
Then equation (\ref{eq:4_7}) can be rewritten in the following form
\[
(P_{n+1} - P_1)Y_{n+2} = 0.
\]
The last equality means that the values of $Y$ have the following representation
\[ Y_{n+2} = y_0\chi_0 + y_{n+1}\chi_{n+1}. \]

According to the Lemma \ref{lm:t_n_2_inv}
the operator $T_{n+2}(\hat{b}_n(\cdot, \lambda))$ is invertible, so we get that
$$T_{n+2}^{-1}(\hat{b}_n(\cdot, \lambda))Y_{n+2} = \chi_1 p(\cdot, \lambda)X_n ,$$
that is
\begin{equation} \label{eq:4_7a}
y_0[T_{n+2}^{-1}(\hat{b}_n(\cdot, \lambda))\chi_0](t)
+ y_{n+1}[T_{n+2}^{-1}(\hat{b}_n(\cdot, \lambda))\chi_{n+1}](t) = t p(t, \lambda)X_n(t).
\end{equation}

We introduce the reflection operator acting on the space $L_2^{(n)}$:
\[
(W_n f_n)(t) = \sum_{j=0}^{n-1} f_{n-1-j} t^j,
\]
where
\[
f_n(t) = \sum_{j=0}^{n-1} f_{j} t^j.
\]
From the identity
 $W_{n+2} T_{n+2}(\hat{b}_n) W_{n+2} = T_{n+2}(\tilde{b}_n)$,
which is easy to verify by simple calculation, we get
\[
    [T_{n+2}^{-1}(\hat{b}_n(\cdot, \lambda))\chi_{n+1}](t) = t^{n+1} T^{-1}_{n+2}(\tilde{b}_n(\cdot, \lambda))(t^{-1}).
\]

Thus, equation (\ref{eq:4_7a}) we can rewrite in the form:
\begin{equation} \label{eq:4_8}
y_0\Theta_{n+2}(t, \lambda) + y_{n+1}t^{n+1}\hat{\Theta}_{n+2}(t, \lambda) = t p(t, \lambda)X_n(t).
\end{equation}
Considering that the multiplier $p(t, \lambda)$ disappears when $t = e^{\mathrm{i}s}$ 
and when
$t = e^{-\mathrm{i}s}$, we conclude that $y_0$ and $y_{n+1}$ must satisfy the following homogeneous system of linear algebraic equations:
\begin{align}
\label{eq:sys1}
&\Theta_{n+2}(e^{\mathrm{i}s}, \lambda) y_0 + e^{\mathrm{i}(n+1)s}\hat{\Theta}_{n+2}(e^{\mathrm{i}s}, \lambda)y_{n+1} = 0, \quad \nonumber \\
&\Theta_{n+2}(e^{-\mathrm{i}s}, \lambda) y_0 + e^{-\mathrm{i}(n+1)s}\hat{\Theta}_{n+2}(e^{-\mathrm{i}s}, \lambda)y_{n+1} = 0.
\end{align}
Note that if $y_0 = y_{n+1} = 0$ then by (\ref{eq:4_8}) we get $X_n \equiv 0$. 
Therefore, the original equation (\ref{eq:main_t_n}) has a non-trivial solution $X_n$
if, and only if, the determinant of the system of equations (\ref{eq:sys1}) is zero.
This is the form of the required equality (\ref{eq:4_4a}).
\end{proof}

To investigate the asymptotic behavior of the functions $\Theta_n$ and $\hat{\Theta}_n$ when $n \rightarrow \infty$,
we introduce the Toeplitz operator (infinity dimensional) which corresponds to the matrix
$(b_{i-j})_{i,j=0}^\infty$. Let $P:\, L_2(\mathbb{T}) \to H_2(\mathbb{T})$ be the projector defined by
\[
\left( P f \right)(t) = \sum_{j=0}^{\infty} f_j t^j,
\]
where
\[
\sum_{j=-\infty}^{\infty} f_j t^j \ \in \ L_2(\mathbb{T}),
\]
and $H_2(\mathbb(T))$ is the Hardy's famous space.
Then
\begin{equation}
\label{eq:4_9a}
[T(b)f](t) = [Pbf](t)
\end{equation}
called the Toeplitz operator with symbol $b$ (see \cite{BS06}).

Subsequent reasoning is essentially based on the general theory of projection methods (see \cite{Uni}, \cite{GF1974}, \cite{Koz1973}).
Recall the definition of Wiener-Hopf factorization. 
Let the function $f$ belong to the Wiener class, i.e. $f \in W^\alpha$ and $f(t)\ne 0$, $t \in \mathbb{T}$.
Then there is the representation of the function $f$ as the following product:
$$f = f_+ t^{\kappa}f_-,$$
where $\kappa = \mathrm{wind}(f)$, $f_{\pm} \in W_{\pm}^\alpha$, and $wind(f_{\pm}) = 0$.
Here we assume
$$W_\pm^\alpha = \{f \in W^\alpha: f(t) = \sum_{j=0}^{\infty}f_{\pm j} t^{\pm j}\}.$$

By virtue of Lemma \ref{lm:wind_b_n} and equation (\ref{eq:inf_b_n}), a function $b(t,\lambda)$
is factorisable in the space $W^{\alpha-2}$ (see Lemma \ref{lm:b_in_w} i)),
while the factorization factors $\hat{b}_\pm(t,\lambda)$ can be written in the form:
\begin{equation}
\label{eq:4_10}
\hat{b}_{\pm}(t, \lambda) = \exp\left(\frac{1}{2}\log(\hat{b}(t, \lambda)) \pm \frac{1}{2\pi \mathrm{i}} \int_{\mathbb{T}}\dfrac{\log(\hat{b}(\tau, \lambda))}{\tau - t}d\tau\right),
\end{equation}
where the integral is understood in the sense of the principal value.
The functions $\hat{b}_\pm(t,\lambda)$ can be analytically continued inside and outside respectively
of the unit circle $\mathbb{T}$ by the formula:
\[
\hat{b}_{\pm}(t, \lambda) = \exp\left(\pm \frac{1}{\pi \mathrm{i}} \int_{\mathbb{T}}\dfrac{\log(\hat{b}(\tau, \lambda))}{\tau - t}d\tau\right), \qquad |t^{\pm 1}| < 1.
\]

Note that due to the symmetry $b(t^{-1}, \lambda) = b(t, \lambda)$, the factorization factors satisfy the following relation
\begin{equation}
    \label{eq:hat_b_minus}
    \hat{b}_{-}(t,\lambda) =  \hat{b}_{+}(t^{-1},\lambda)/\chi_b(\lambda),
\end{equation}
where the number $\chi_b$ can be calculated by the formula:
\begin{equation}
    \label{eq:chi_b}
    \chi_b(\lambda) = \exp \left\{\frac{1}{\pi} \int_0^\pi \log \hat{b}(e^{\mathrm{i}\varphi},\lambda) d \varphi \right\} .
\end{equation}
Similarly, according to Lemmas \ref{lm:wind_b_n} and (\ref{eq:inf_3}), the function 
$\hat{b}_n(t,\lambda)$
also has the Wiener-Hopf factorization:
\begin{equation}
    \label{eq:hat_b_n_fact}
    \hat{b}_n(t,\lambda) = \hat{b}_{n,+}(t,\lambda) \hat{b}_{n,-}(t,\lambda) .
\end{equation}
Note that the functions $b_{n,\pm}(t,\lambda)$ represent polynomials of degree $(n-1)$
of the variables $t$ and $t^{-1}$ respectively. In addition, due to (\ref{eq:b_n_norm_1}) we have
\begin{equation}
    \label{eq:hat_b_n_pm_norm}
    \| \hat{b}_{n,\pm}(\cdot, \lambda) \|_{\alpha-2} \le \mathrm{const}\, \|a\|_\alpha .
\end{equation}
Next we need to consider the functions $\hat{b}_{n}(t, \lambda)$ and $b_{n,\pm}(t, \lambda)$ in the ring area
\[
K_n =\left\{ z \in \mathbb{C} \vert 1-\frac{C}{n} \le |z| \le 1+\frac{C}{n} \right\},
\qquad C > 0.
\]
(see the definition of $\Pi_n(a)$ in (\ref{p_n_a_set})).
Consider the numbers $r_n^{\pm}=1\pm \frac{C}{n}$, $C > 0$.
\begin{lm}
\label{lm:norm_f_n}
Let the function $f_n(t) \in L_2^{(n)}$ then
\[
\| f_n(r_n^\pm t) \|_\alpha \le \mathrm{const}\, \| f_n(t) \|_\alpha .
\]
\end{lm}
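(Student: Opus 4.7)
The plan is to expand $f_n(r_n^{\pm}t)$ in its Fourier series and apply the definition of the $\|\cdot\|_\alpha$-norm directly. Since $f_n \in L_2^{(n)}$, we may write
\[
f_n(t) = \sum_{j=0}^{n-1} f_j\, t^j,
\qquad\text{so}\qquad
f_n(r_n^{\pm}\, t) = \sum_{j=0}^{n-1} f_j\, (r_n^{\pm})^j\, t^j.
\]
Then by the definition of the Wiener norm,
\[
\bigl\| f_n(r_n^{\pm}\, \cdot)\bigr\|_\alpha
= \sum_{j=0}^{n-1} |f_j|\, (r_n^{\pm})^j\, (j+1)^\alpha.
\]

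The key point is that the exponent $j$ never exceeds $n-1$ while $r_n^{\pm} = 1 \pm C/n$, so the factor $(r_n^{\pm})^j$ is uniformly bounded. For the ``$+$'' case, I would use the elementary inequality
\[
(r_n^{+})^j = \Bigl(1+\tfrac{C}{n}\Bigr)^{j} \le \Bigl(1+\tfrac{C}{n}\Bigr)^{n-1} \le e^{C},
\]
while for the ``$-$'' case the corresponding factor is even simpler: for $n$ large enough so that $C < n$ we have $(r_n^{-})^j \le 1$ for all $0 \le j \le n-1$. Combining these two bounds,
\[
\bigl\| f_n(r_n^{\pm}\, \cdot)\bigr\|_\alpha
\le e^{C} \sum_{j=0}^{n-1} |f_j|\, (j+1)^\alpha
= e^{C}\, \| f_n \|_\alpha,
\]
which is the desired inequality with $\mathrm{const} = e^{C}$, independent of $n$.

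There is essentially no obstacle here: the statement reduces to the uniform boundedness of the geometric factor $(1\pm C/n)^{n-1}$, which is a one-line calculation. The only thing to watch is that the truncation $f_n \in L_2^{(n)}$ (a polynomial of degree at most $n-1$) is used crucially, since without this restriction on the degree the dilation by $r_n^{+} > 1$ would certainly not preserve the Wiener norm. This is precisely the mechanism that will later allow us to analyze $\hat{b}_{n,\pm}(t,\lambda)$, which are polynomials of degree $(n-1)$, on the annulus $K_n$.
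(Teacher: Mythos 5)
Your proof is correct and takes essentially the same approach as the paper: expand the Fourier series, pull out the geometric factor $(r_n^\pm)^j$, and observe that it is uniformly bounded because $|j|\le n-1$ while $r_n^\pm = 1\pm C/n$. One small remark: the paper's own proof writes $f_n$ as a Laurent polynomial $\sum_{j=-(n-1)}^{n-1} f_j t^j$ rather than as an element of $L_2^{(n)}$, because the lemma is later applied to $\hat{b}_n(\cdot,\lambda)$ and the factors $\hat{b}_{n,\pm}$, which contain negative powers of $t$; in that setting the ``$-$'' case is no longer trivial (for $j<0$ one has $(1-C/n)^j = (1-C/n)^{-|j|} > 1$), but the same bound $(1-C/n)^{-|j|}\le (1-C/n)^{-(n-1)}\le \mathrm{const}$ closes the argument, so the proof is unchanged in substance.
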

\begin{proof}
Let
\[
f_n(t) =  \sum_{j=-(n-1)}^{n-1} f_j t^j .
\]
Then
\[
\| f_n(r_n^\pm t) \|_\alpha = \sum_{j=-(n-1)}^{n+1} |f_j|\cdot |r_n^\pm|^j (1+|j|)^\alpha .
\]
Since $|j| \le n-1$,
\[
|r_n^\pm|^j = \left( 1 + \frac{C}{n} \right)^j \le \exp\left(\frac{|j|C}{n}\right) \le \exp(C).
\]
Thus
\[
\| f_n(r_n^\pm t) \|_\alpha \le \exp(C) \sum_{j=-(n-1)}^{n+1} |f_j|\cdot  (1+|j|)^\alpha
= \mathrm{const}\, \| f_n(t) \|_\alpha .
\]
\end{proof}
The following statement follows from the above Lemma.
\begin{lm}
\label{lm:sup_hat_b}
Let $a\in \mathrm{CSL}^\alpha$, $\alpha \ge 2 $, then the functions $\hat{b}(r_n^\pm t,\lambda)\in W^{\alpha-2}$, with
\begin{equation}
    \label{eq:sup_hat_b_n}
    \sup_{\lambda \in \mathcal{R}_n(a)} \| \hat{b}_{n}(r_n^{\pm} t, \lambda) \|_{\alpha-2}
    \le \mathrm{const}\, \|a\|_\alpha .
\end{equation}
Besides, $\hat{b}_{n,\pm}(r_n^{\pm} t, \lambda) \in W^{\alpha-2}$ and
\begin{equation}
    \label{eq:sup_norm_hat_b_n_pm}
    \sup_{\lambda \in \mathcal{R}_n(a)} \| \hat{b}_{n,\pm}(r_n^{\pm} t, \lambda) \|_{\alpha-2}
    \le \mathrm{const}\, \|a\|_\alpha .
\end{equation}
\end{lm}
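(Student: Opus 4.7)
The plan is to reduce both inequalities to Lemma~\ref{lm:norm_f_n} together with the norm bounds already established in Lemma~\ref{lm:b_in_w} and in (\ref{eq:hat_b_n_pm_norm}). The key observation is that all three functions $\hat b_n(\cdot,\lambda)$, $\hat b_{n,+}(\cdot,\lambda)$, $\hat b_{n,-}(\cdot,\lambda)$ are (Laurent) polynomials whose degrees are bounded by a constant multiple of $n$, so that on $K_n$ the dilation factor $(r_n^\pm)^j$ is uniformly bounded.

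For (\ref{eq:sup_hat_b_n}) I would first recall that, by the construction after (\ref{eq:b_n_hat_def}), $\hat b_n(t,\lambda) = b_n(t,\varphi_{1,n}(\lambda))$ is a Laurent polynomial in $t$ of degree at most $n-1$ in both directions. Lemma~\ref{lm:norm_f_n} applies directly (its proof in fact handles Laurent polynomials), giving
\[
\|\hat b_n(r_n^\pm\, t,\lambda)\|_{\alpha-2} \le \mathrm{const}\,\|\hat b_n(t,\lambda)\|_{\alpha-2},
\]
since for $|j|\le n-1$ the dilation factor satisfies $|(r_n^\pm)^{j}| \le (1+C/n)^{|j|}\le e^{C}$. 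Combining with the uniform estimate (\ref{eq:b_n_norm_1}) yields (\ref{eq:sup_hat_b_n}); uniformity in $\lambda \in \mathcal{R}_n(a)$ is inherited from the uniformity in $s$ of (\ref{eq:b_n_norm_1}).

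Then (\ref{eq:sup_norm_hat_b_n_pm}) follows by the same scheme applied separately to each factor. The function $\hat b_{n,+}(\cdot,\lambda)$ is a polynomial of degree at most $n-1$ in $t$, hence lies in $L_2^{(n)}$, and Lemma~\ref{lm:norm_f_n} applies directly with dilation $r_n^+$. The function $\hat b_{n,-}(\cdot,\lambda)$ is a polynomial of degree at most $n-1$ in $t^{-1}$; setting $\widetilde b_{n,-}(u,\lambda) := \hat b_{n,-}(u^{-1},\lambda)$ and applying Lemma~\ref{lm:norm_f_n} to the polynomial $\widetilde b_{n,-}$ with dilation $(r_n^-)^{-1} = 1 + C/(n-C)$, which is again of the form $1 + C'/n$, brings it into the same framework. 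Together with (\ref{eq:hat_b_n_pm_norm}) this gives (\ref{eq:sup_norm_hat_b_n_pm}) uniformly in $\lambda$.

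The only point that requires a moment's care, and what I expect to be the only real step, is verifying that $\hat b_n$, $\hat b_{n,+}$ and $\hat b_{n,-}$ really are polynomials with degrees controlled by $n$; this is however already recorded in the text following (\ref{eq:b_n_hat_def}) and (\ref{eq:hat_b_n_fact}). Once granted, the whole argument reduces to the elementary inequality $(1\pm C/n)^{|j|}\le e^{C}$ for $|j|\le n$, i.e.\ essentially to Lemma~\ref{lm:norm_f_n} itself, so no genuine obstacle is expected.
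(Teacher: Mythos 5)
Your proof is correct, and it differs from the paper's in one small but genuine respect. For (\ref{eq:sup_norm_hat_b_n_pm}) the paper first shows, via Lemma~\ref{lm:d_b_n}, that the dilated function $\hat b_n(r_n^\pm t,\lambda)$ is nonvanishing with zero winding number, hence admits its own Wiener--Hopf factorization in $W^{\alpha-2}$, and then uses the standard bound $\|\hat b_{n,\pm}(r_n^\pm t,\lambda)\|_{\alpha-2}\le \mathrm{const}\,\|\hat b_n(r_n^\pm t,\lambda)\|_{\alpha-2}$ together with the already-proved (\ref{eq:sup_hat_b_n}). You instead apply Lemma~\ref{lm:norm_f_n} directly to each factor $\hat b_{n,+}$ and $\hat b_{n,-}$ (regarded as polynomials in $t$ and $t^{-1}$ of degree $O(n)$, with $\hat b_{n,-}$ handled through the substitution $u=t^{-1}$), and then invoke the previously established bound (\ref{eq:hat_b_n_pm_norm}). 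Your route is slightly more economical: it avoids re-establishing nonvanishing and zero winding number for the dilated function, at the cost of relying on (\ref{eq:hat_b_n_pm_norm}), which the paper had anyway proved by the same Wiener--Hopf argument on $\mathbb{T}$. In substance the two proofs use the same ingredients (Lemma~\ref{lm:norm_f_n} plus the fact that the Wiener--Hopf factors have uniformly controlled $W^{\alpha-2}$-norm), merely in a different order. One minor point worth stating explicitly if you write this up: Lemma~\ref{lm:norm_f_n} is stated for $f_n\in L_2^{(n)}$, but, as you note, its proof handles Laurent polynomials $\sum_{|j|\le n-1}f_jt^j$, so it does apply to $\hat b_n$ and to $\hat b_{n,-}$ without the change of variable; the substitution $u=t^{-1}$ is thus a convenience rather than a necessity.
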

\begin{proof}
According to Lemma \ref{lm:norm_f_n} we get (\ref{eq:sup_hat_b_n}).
Further, by Lemma \ref{lm:d_b_n} we obtain that for sufficiently large $n$,
$\hat{b}_n(r_{n}^{\pm} t, \lambda) \ne 0$ and
$\mathrm{wind}\, \hat{b}_n(r_{n}^{\pm} t, \lambda) = 0$ for all $t\in \mathbb{T}$.
Thus, the functions $\hat{b}_n(r_{n}^{\pm} t, \lambda)$ admit a Wiener-Hopf factorization in the space $W^{\alpha-2}$ and the following inequalities hold
\[
\| \hat{b}_{n,\pm}(r_{n}^{\pm} t, \lambda) \|_{\alpha-2}
\le \mathrm{const}\, \| \hat{b}_{n}(r_{n}^{\pm} t, \lambda) \|_{\alpha-2} .
\]
Applying Lemma \ref{lm:norm_f_n} and inequality (\ref{eq:sup_hat_b_n}) we get (\ref{eq:sup_norm_hat_b_n_pm}).
\end{proof}

Now we are ready to get an asymptotic representation of the functions $\Theta_{n+2}(t,\lambda)$,
from the Theorem \ref{thm:4_2}.
To do this, we note that the inverse of the Toeplitz operator (\ref{eq:4_9a}), calculated by the formula:
\[
T^{-1}(b) = b_{+}^{-1}(t) P b_{-}^{-1}(t) ,
\]
where $b(t) = b_{+}(t) b_{-}(t)$, is a Wiener-Hopf factorization in the space $W^\alpha$ (see (\ref{eq:4_10})).
Thus
\begin{equation}
    \label{eq:t_m1_hat_b_n}
    T^{-1}(\hat{b}_n)\chi_0 = \hat{b}_{n,+}^{-1}(t) P\left( \hat{b}_{n,-}^{-1}(\cdot) \cdot 1 \right)(t)
    = \hat{b}_{n,+}^{-1}(t) .
\end{equation}
\begin{lm}
\label{lm:theta_n_asymp}
Let $a \in \mathsf{CSL}^{\alpha}$, $\alpha \ge 2$. Then the following asymptotic representation holds
\[
\Theta_{n+2}(t,\lambda) = \hat{b}^{-1}_{n,+}(t,\lambda) + \tilde{R}_1^{(n)}(t,\lambda), \quad
\hat{\Theta}_{n+2}(t,\lambda) = \hat{b}^{-1}_{n,-}(t^{-1},\lambda) / \chi_b(\lambda) + \tilde{R}_2^{(n)}(t,\lambda),
\]
where, for $n\to \infty$
\[
\sup\left\{ \left| \tilde{R}_j^{(n)}(z,\lambda) \right|\, : (z,\lambda) \in K_n\times \mathcal{R}_n(a)
 \right\} = o\left( \sfrac{1}{n^{\alpha-2}}\right), \qquad j=1, 2,
\]
and $\chi_b(\lambda)$ is given by (\ref{eq:chi_b}).
\end{lm}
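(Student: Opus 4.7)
The plan is to compare the finite-section inverse $\Theta_{n+2}=T_{n+2}^{-1}(\hat b_n)\chi_0$ with its infinite-dimensional analogue: by (\ref{eq:t_m1_hat_b_n}) we already know $T^{-1}(\hat b_n)\chi_0=\hat b_{n,+}^{-1}$, so I would write down an error equation for $\Theta_{n+2}-\hat b_{n,+}^{-1}$ and estimate it using the tools built in Section 4. Starting from the Wiener--Hopf identity $\hat b_n\cdot\hat b_{n,+}^{-1}=\hat b_{n,-}$ and splitting $\hat b_{n,+}^{-1}=P_{n+2}(\hat b_{n,+}^{-1})+Q_{n+2}(\hat b_{n,+}^{-1})$, application of $P_{n+2}$ gives
\[
T_{n+2}(\hat b_n)\,P_{n+2}(\hat b_{n,+}^{-1})=P_{n+2}(\hat b_{n,-})-P_{n+2}\bigl(\hat b_n\,Q_{n+2}(\hat b_{n,+}^{-1})\bigr),
\]
and since $\hat b_{n,-}\in W^{\alpha-2}_{-}$ the first term on the right reduces to its constant Fourier coefficient, which is $\chi_0$ in the normalization implicit in (\ref{eq:4_10}). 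Comparing with $T_{n+2}(\hat b_n)\Theta_{n+2}=\chi_0$ and inverting yields the key error representation
\[
\Theta_{n+2}-P_{n+2}(\hat b_{n,+}^{-1}) \;=\; T_{n+2}^{-1}(\hat b_n)\,P_{n+2}\bigl(\hat b_n\,Q_{n+2}(\hat b_{n,+}^{-1})\bigr).
\]

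I would then estimate the right-hand side uniformly in $\lambda\in\mathcal{R}_n(a)$. Lemma \ref{lm:sup_hat_b} gives $\hat b_{n,+}^{-1}(\cdot,\lambda)\in W^{\alpha-2}$ with norm uniform in $\lambda$ (the infimum condition (\ref{eq:inf_3}) makes the inverse well defined in this algebra). Lemma \ref{lm:f_w_alpha} i) then yields $\sup_{\mathbb T}|Q_{n+2}(\hat b_{n,+}^{-1})|=o(n^{-(\alpha-2)})$, using that the $W^{\alpha-2}$-tail of a fixed $W^{\alpha-2}$-element vanishes. Combined with the uniform $L_2$-bound on $T_{n+2}^{-1}(\hat b_n)$ from Lemma \ref{lm:t_n_2_inv} and a uniform $L_\infty$ bound on $\hat b_n$ (via Lemma \ref{lm:b_in_w}), one obtains the desired $o(n^{-(\alpha-2)})$ rate in $L_2(\mathbb T)$ for $\Theta_{n+2}-P_{n+2}(\hat b_{n,+}^{-1})$. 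Since this difference is a polynomial of degree at most $n+1$, Lemma \ref{lm:norm_f_n} (applied to the Laurent polynomial $t\mapsto f(r_n^\pm t)$) transfers the bound to the supremum on the annulus $K_n$; adding back the tail $Q_{n+2}(\hat b_{n,+}^{-1})$, which is itself $o(n^{-(\alpha-2)})$ on $K_n$ because $|z|^k\le e^C$ for $k\le n+1$ and $z\in K_n$, completes the argument for $\tilde R_1^{(n)}$.

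For $\hat\Theta_{n+2}$, I would exploit the reflection identity $W_{n+2}T_{n+2}(\hat b_n)W_{n+2}=T_{n+2}(\tilde b_n)$ already used in the proof of Theorem \ref{thm:4_2}, together with the symmetry $\tilde b_n=\hat b_n$: this reduces $\hat\Theta_{n+2}$ to an evaluation of $\Theta_{n+2}$ at the reflected argument, whereupon the factorization identity (\ref{eq:hat_b_minus}) together with (\ref{eq:chi_b}) rewrites the leading term in the required form $\hat b_{n,-}^{-1}(t^{-1},\lambda)/\chi_b(\lambda)$, and the error estimate is inherited from the first part. The main technical obstacle will be securing a uniform $o$-rate (rather than $O$) across both the spectral parameter $\lambda\in\mathcal{R}_n(a)$ and the complex annulus $K_n$. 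This hinges on the continuous dependence of the Wiener--Hopf factors $\hat b_{n,\pm}(\cdot,\lambda)$ on $\lambda$ (ensured by (\ref{eq:inf_3}) and Lemma \ref{lm:d_b_n}, which also provide the approximation $\hat b_n\to \hat b$ in $W^{\alpha-2}$), combined with the polynomial-degree Bernstein-type estimate of Lemma \ref{lm:norm_f_n} that converts $L_2(\mathbb T)$-rates into uniform rates on $K_n$.
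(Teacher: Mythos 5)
Your proposal is correct and follows essentially the same route as the paper: both reduce the error to the two pieces $T_{n+2}^{-1}(\hat b_n)\,P_{n+2}\bigl(\hat b_n\,Q_{n+2}(\hat b_{n,+}^{-1})\bigr)$ and $Q_{n+2}(\hat b_{n,+}^{-1})$ (the paper obtains this from the standard finite-section perturbation identity rather than from applying $P_{n+2}$ to the Wiener--Hopf relation, but the resulting decomposition is the same), and then invoke the uniform bound of Lemma~\ref{lm:t_n_2_inv}, the tail estimate of Lemma~\ref{lm:f_w_alpha}~i) applied to $\hat b_{n,+}^{-1}\in W^{\alpha-2}$, and Lemma~\ref{lm:norm_f_n} to pass from $\mathbb{T}$ to the annulus $K_n$. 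The treatment of $\hat\Theta_{n+2}$ via the reflection identity and (\ref{eq:hat_b_minus}) likewise matches the paper's (abbreviated) argument.
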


\begin{proof}
From the definition of $\Theta_n(t, \lambda)$ (see (\ref{eq:t_m1_hat_b_n})) it follows that
\[
\Theta_n(t, \lambda) = \hat{b}_{n,+}^{-1}(t, \lambda) + R^{(n)}_1(t, \lambda),
\]
where
\begin{align*}
\tilde{R}_1^{(n)}(t, \lambda) &= T_{n+2}^{-1} \left( \hat{b}_n(\cdot,\lambda) \right) \chi_0
- T^{-1} \left( \hat{b}_n(\cdot,\lambda) \chi_0 \right) \\
&= T_{n+2}^{-1} \left( \hat{b}_n(\cdot,\lambda) \right) \left[ T\left( \hat{b}_n(\cdot,\lambda) \right)
- T_{n+2} \left( \hat{b}_n(\cdot,\lambda) \right) \right]
T^{-1} \left( \hat{b}_n(\cdot,\lambda) \right) \chi_0 - Q_{n+2}T^{-1} \left( \hat{b}_n(\cdot,\lambda) \right)\chi_0.
\end{align*}
By using the obvious equalities
\[
P b P = P_n b P_n + P_n b Q_n + Q_n b P_n + Q_n b Q_n
\]
and (\ref{eq:t_m1_hat_b_n}) we get
\begin{align}
    \tilde{R}_1^{(n)}(t, \lambda) &= \left[ T_{n+2}^{-1} \left( \hat{b}_n(\cdot,\lambda) \right)
    P_{n+2} \hat{b}_n(\cdot,\lambda) Q_{n+2} \hat{b}_n^{-1}(\cdot,\lambda) \right] (t)
    \nonumber
    \\ &- \left[ Q_{n+2} \left( \hat{b}_{n,+}^{-1}(\cdot,\lambda) \right) \right](t),
    \qquad t \in \mathbb{T} .
    \label{eq:tilde_r_1_n}
\end{align}
Thus
\begin{align*}
\| \tilde{R}_1^{(n)}(\cdot, \lambda) \|_{\alpha-2} &\le \left( \| T_{n+2}^{-1} \left( \hat{b}_n(\cdot,\lambda) \right) \|_{\alpha-2}
\cdot \| P_{n+2} \left( \hat{b}_n(\cdot,\lambda) \right) \|_{\alpha-2} + 1 \right) \\
&\times \| Q_{n+2} \left( \hat{b}_{n,+}^{-1}(\cdot,\lambda) \right) \|_{\alpha-2} .
\end{align*}
From (\ref{eq:norm_t_n_m1})
\[
\| T_{n+2}^{-1} \left( \hat{b}_n(\cdot,\lambda) \right) \|_{\alpha-2} \le M .
\]
Further
\[
\| P_{n+2} \left( \hat{b}_n(\cdot,\lambda) \right) \|_{\alpha-2}
\le \mathrm{const}\, \| \hat{b}_n(\cdot,\lambda) \|_{\alpha-2}
\le \mathrm{const}\, \| \hat{b}(\cdot,\lambda)\|_{\alpha-2}.
\]
Finally, Lemma \ref{lm:f_w_alpha} i) gives
\begin{equation}
    \label{eq:norm_q_n_2}
    \| Q_{n+2} \left( \hat{b}_{n,+}^{-1}(\cdot,\lambda) \right)\|_{\alpha - 2}
    = o\left( \sfrac{1}{n^{\alpha-2}} \right) .
\end{equation}
Thus
\begin{equation}
    \label{eq:norm_tilde_r_1_n}
    \| \tilde{R}_1^{(n)} \|_{\alpha-2} = o\left( \sfrac{1}{n^{\alpha-2}} \right) .
\end{equation}
In the above calculations $t \in \mathbb{T}$.
Consider case $z \in K_n$, i.e.
\begin{equation}
    \label{eq:z_r_t}
    z=r t , \qquad r\in [r_n^{-}, r_n^{+}] .
\end{equation}
Denote the first and second term in (\ref{eq:tilde_r_1_n}), respectively, $R^{(n)}_{1,1}(t,\lambda)$
and $R^{(n)}_{1,2}(t,\lambda)$.
Note that $R^{(n)}_{1,1}(t,\lambda) \in L_2^{(n+2)}$ and with
(\ref{eq:norm_q_n_2})--(\ref{eq:norm_tilde_r_1_n}), we get
\[
\| \tilde{R}^{(n)}_{1,1}(t,\lambda) \|_{\alpha-2} = o\left( \sfrac{1}{n^{\alpha-2}} \right),
\qquad t\in \mathbb{T} .
\]
Thus, Lemma \ref{lm:norm_f_n} implies that
\[
\sup_{r \in [r_n^{-}, r_n^{+}]} \| \tilde{R}_{1,1}^{(n)}(r t,\lambda) \|_{\alpha-2}
= o\left( \sfrac{1}{n^{\alpha-2}} \right) .
\]
Consider now $\tilde{R}^{(n)}_{1,2}(t,\lambda) := \left[ Q_{n+2} \left( \hat{b}_{n,+}^{-1}(\cdot,\lambda) \right) \right](t)$.
The function $b_{n,+}(t,\lambda) \in L_2^{(n+2)}$ and by Lemma \ref{lm:sup_hat_b} we have
\[
\sup_{r \in [r_n^{-}, r_n^{+}]} \| b_{n,+}(r t,\lambda) \|_{\alpha-2}
\le \mathrm{const}\, \|a\|_\alpha .
\]
In addition, Lemma \ref{lm:d_b_n} implies that
\[
\sup_{z\in K_n} |b_{n,+}(z,\lambda)| \ge \delta,
\]
where $\delta$ does not depend on $n$ and $\lambda$.
According to a standard theorem of the theory of Banach algebras, for all 
$r \in [1 - C/n, 1 + C/n]$,
$b_{n,+}^{-1}(rt, \lambda) \in W^{\alpha-2}$, and in addition
\begin{align*}
\| b_{n,+}^{-1}(rt, \lambda) \|_{\alpha-2} &\le \mathrm{const}\, \delta^{-1} \| b_{n,+}(rt, \lambda) \|_{\alpha-2} \\
&\le \mathrm{const}\, \delta^{-1} \| a(t,\lambda) \|_{\alpha},
\qquad (|t|=1).
\end{align*}
Thus, according to Lemma \ref{lm:f_w_alpha}  i) it is possible to show that
\[
\sup_{r \in [r_n^{-}, r_n^{+}]} \| \tilde{R}^{(n)}_{1,2}(r t,\lambda) \|_{\alpha-2}
= o\left( \sfrac{1}{n^{\alpha-2}} \right) 
\]
and therefore
\[
\sup_{r \in [r_n^{-}, r_n^{+}]} \| \tilde{R}^{(n)}_{1}(r t,\lambda) \|_{\alpha-2}
= o\left( \sfrac{1}{n^{\alpha-2}} \right) .
\]
Since
\[
\|f\|_0 \le \|f\|_{\alpha-2} ,
\]
this Lemma is proved for the case $j=1$.
The case $j=2$ treated similarly.
\end{proof}

Denote
\begin{align*}
\hat{b}_n(t, g_n(s)) := b_n(t,s), \qquad \hat{b}_{n,\pm}(t, g_n(s)) := b_{b, \pm}(t,s),\\
\tilde{R}_j(t, g_n(s)) := R_j(t, s), \qquad j=1,2.
\end{align*}
Note that as in  (\ref{eq:hat_b_minus}) we have
\begin{equation}
    \label{eq:b_n_m_p}
    b_{n,-}(t^{-1}, s) =  b_{n,+}(t,s)/\chi_b.
\end{equation}
We introduce a continuous function $\eta_n(s)$ satisfying the relation
\begin{equation}
    \label{eq:b_n_m_p_frac}
\frac{b_{n,+}(e^{\mathrm{i}s},s)}{b_{n,+}(e^{-\mathrm{i}s},s)} = e^{-\mathrm{i}\eta_n(s)},
\quad s \in \Pi_n(a),
\end{equation}
We take the continuous branch of the function $\eta_n(s)$ assuming $\eta_n(0) = 0$. It is not difficult to see that
\begin{equation}
    \label{eq:eta_n_0_pi}
    \eta_n(\pi) = \eta_n(0) = 0.
\end{equation}

\begin{lm}
\label{lm:eta_n_pi_j}
Let $a \in \mathsf{CSL}^\alpha$, $\alpha \ge 2$.
Then there is such a large enough natural $N_0$ that
for all $n \ge N_0$ the number $\lambda$ is an eigenvalue of $T_n(a)$
if and only if $j \in \mathbb{Z}$ and $s \in \Pi_n(a)$ satisfy the equation
\begin{equation}
    \label{eq:eta_n_r6}
    (n+1) s + \eta_n(s) + R_6^{(n)}(s) = \pi j,
\end{equation}
$\lambda=g_n(s)$ where $R_6^{(n)}(s)$ satisfies the following asymptotic relation with respect to $n \to \infty$:
\begin{equation}
    \label{eq:r6_estim}
    R_6^{(n)}(s) = o\left( \sfrac{1}{n^{\alpha-2}}\right)
\end{equation}
-- uniformly in parameter $s \in \Pi_n(a)$.
\end{lm}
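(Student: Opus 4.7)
The plan is to start from the characterization of the eigenvalues supplied by Theorem~\ref{thm:4_2}, namely equation (\ref{eq:4_4a}), and reduce it via the asymptotic representations supplied by Lemma~\ref{lm:theta_n_asymp} to the transcendental equation (\ref{eq:eta_n_r6}) up to a uniformly small remainder.

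First, I would substitute
\[
\Theta_{n+2}(e^{\pm\mathrm{i}s},\lambda) = \hat b_{n,+}^{-1}(e^{\pm\mathrm{i}s},\lambda) + \tilde R_1^{(n)}(e^{\pm\mathrm{i}s},\lambda),\qquad
\hat\Theta_{n+2}(e^{\pm\mathrm{i}s},\lambda) = \frac{\hat b_{n,-}^{-1}(e^{\mp\mathrm{i}s},\lambda)}{\chi_b(\lambda)} + \tilde R_2^{(n)}(e^{\pm\mathrm{i}s},\lambda)
\]
into (\ref{eq:4_4a}). For $s \in \Pi_n(a)$ we have $e^{\pm\mathrm{i}s} \in K_n$, so by Lemma~\ref{lm:theta_n_asymp} the remainders $\tilde R_j^{(n)}$ are $o(1/n^{\alpha-2})$ uniformly in $s$ and $\lambda$. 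By (\ref{eq:inf_3}), Lemma~\ref{lm:sup_hat_b}, and the fact that $\chi_b(\lambda)$ stays away from $0$ and $\infty$ uniformly in $\lambda\in\mathcal{R}_n(a)$, the leading factors $\hat b_{n,\pm}^{\pm 1}$ are bounded uniformly above and below, so the substitution yields only multiplicative errors of the same order.

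Second, I would use the symmetry identity (\ref{eq:b_n_m_p}), namely $\hat b_{n,-}(t^{-1},\lambda) = \hat b_{n,+}(t,\lambda)/\chi_b(\lambda)$, to rewrite the leading cross-products in terms of $\hat b_{n,+}$ alone. After factoring out the nonvanishing quantity $\bigl(\chi_b(\lambda)\,\hat b_{n,+}(e^{\mathrm{i}s})\,\hat b_{n,+}(e^{-\mathrm{i}s})\bigr)^{-1}$, equation (\ref{eq:4_4a}) is equivalent to
\[
e^{-\mathrm{i}(n+1)s}\,\hat b_{n,+}(e^{-\mathrm{i}s},\lambda) - e^{\mathrm{i}(n+1)s}\,\hat b_{n,+}(e^{\mathrm{i}s},\lambda) = \varrho_n(s),\qquad |\varrho_n(s)| = o(1/n^{\alpha-2}),
\]
uniformly on $\Pi_n(a)$. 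Dividing by the geometric mean $\bigl(\hat b_{n,+}(e^{\mathrm{i}s})\,\hat b_{n,+}(e^{-\mathrm{i}s})\bigr)^{1/2}$ and invoking the definition (\ref{eq:b_n_m_p_frac}) of $\eta_n$, this reduces to
\[
-2\mathrm{i}\,\sin\bigl((n+1)s + \eta_n(s)\bigr) = \sigma_n(s),\qquad |\sigma_n(s)| = o(1/n^{\alpha-2}).
\]

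Finally, I would invert the sine relation locally. Anchoring the continuous branch of $(n+1)s + \eta_n(s)$ by the normalization $\eta_n(0) = \eta_n(\pi) = 0$ from (\ref{eq:eta_n_0_pi}), the last identity becomes $(n+1)s + \eta_n(s) + R_6^{(n)}(s) = \pi j$ for some $j \in \mathbb{Z}$, with $|R_6^{(n)}(s)| = o(1/n^{\alpha-2})$ uniformly on $\Pi_n(a)$. The main obstacle is the bookkeeping of several independent $o(1/n^{\alpha-2})$ remainders and the selection of a globally consistent branch of $\arg$ so that the index $j$ labels eigenvalues unambiguously throughout $\Pi_n(a)$; this relies on the uniform lower bounds for $|\hat b_{n,+}|$ and $|\chi_b|$, and on the fact that Lemma~\ref{lm:theta_n_asymp} delivers its estimate on the whole annulus $K_n$, which is precisely what makes the reduction uniform on the complex strip $\Pi_n(a)$ rather than only on the real interval $[cn^{-1},\pi - cn^{-1}]$.
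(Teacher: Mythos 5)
Your plan follows the paper's own route: start from equation~(\ref{eq:4_4a}) of Theorem~\ref{thm:4_2}, substitute the asymptotic representations of $\Theta_{n+2}$, $\hat\Theta_{n+2}$ from Lemma~\ref{lm:theta_n_asymp}, use the symmetry (\ref{eq:b_n_m_p}) of the factorization factors to reduce everything to $\hat b_{n,+}$, invoke the definition (\ref{eq:b_n_m_p_frac}) of $\eta_n$, and recover the eigenvalue condition $(n+1)s + \eta_n(s) + R_6^{(n)}(s) = \pi j$ with a uniform $o(1/n^{\alpha-2})$ remainder. The key ingredients are identical: uniform invertibility of $T_{n+2}(\hat b_n)$, the estimate on the annulus $K_n$ supplying uniformity on the strip $\Pi_n(a)$ rather than just on the real interval, and the positive lower bounds (\ref{eq:inf_3}) and on $|\chi_b|$. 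Your closing remark about where the uniformity actually comes from is exactly the right diagnosis.

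Two points should be tightened. First, your intermediate identity
\[
e^{-\mathrm{i}(n+1)s}\,\hat b_{n,+}(e^{-\mathrm{i}s},\lambda) - e^{\mathrm{i}(n+1)s}\,\hat b_{n,+}(e^{\mathrm{i}s},\lambda) = \varrho_n(s)
\]
is not what actually comes out of the substitution + symmetry step: after using (\ref{eq:b_n_m_p}), the two leading cross-products both carry the factor $\hat b_{n,+}^{\pm 2}$ or the ratio $\hat b_{n,+}(e^{\pm\mathrm{i}s})/\hat b_{n,+}(e^{\mp\mathrm{i}s})$, not a single power of $\hat b_{n,+}$. Indeed, if one accepts your $\varrho_n$ form literally, the subsequent division by $\bigl(\hat b_{n,+}(e^{\mathrm{i}s})\hat b_{n,+}(e^{-\mathrm{i}s})\bigr)^{1/2}$ and (\ref{eq:b_n_m_p_frac}) yield $\sin\bigl((n+1)s - \eta_n(s)/2\bigr)$, not $\sin\bigl((n+1)s + \eta_n(s)\bigr)$, so the two intermediate steps of your draft are mutually inconsistent even though the final form matches the lemma. (In fairness, the paper itself is internally inconsistent between Lemma~\ref{lm:theta_n_asymp} as stated and the substitution performed in the proof of the present lemma; the sign of $\eta_n$ and the presence of $\hat b_{n,+}$ versus $\hat b_{n,+}^{-1}$ should be checked once and propagated consistently.) Second, the paper instead reduces (\ref{eq:4_4a}) to the exponential identity $e^{2\mathrm{i}(n+1)s} = e^{-2\mathrm{i}\eta_n(s)}\,\frac{1+R_3^{(n)}(s)}{1+R_4^{(n)}(s)}$ and then sets $R_5^{(n)} = \log\frac{1+R_3^{(n)}}{1+R_4^{(n)}}$, $R_6^{(n)} = -R_5^{(n)}/(2\mathrm{i})$. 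This exponential/logarithm route handles the branch-of-$\arg$ issue you worry about automatically: the ambiguity is exactly the $2\pi\mathrm{i}j$ term in the logarithm, with $R_5^{(n)}$ living on the principal branch since $R_3^{(n)}, R_4^{(n)}$ are uniformly $o(1/n^{\alpha-2})$. Your sine/arcsine inversion is equivalent but requires a slightly more careful statement about which branch of $\arcsin$ is chosen and why the integer label is well-defined.
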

\begin{proof}
Considering the results of Lemma \ref{lm:theta_n_asymp}, we rewrite equality (\ref{eq:4_4a}) in the form
\begin{align*}
e^{-\mathrm{i}(n+1)s}& \left( b_{n,+}(e^{\mathrm{i}s},s) + R_1^{(n)}(e^{\mathrm{i}s},s)\right)
\left( b_{n,-}(e^{-\mathrm{i}s},s) + R_2^{(n)}(e^{-\mathrm{i}s},s)\right) \\
=& e^{\mathrm{i}(n+1)s} \left( b_{n,+}(e^{-\mathrm{i}s},s) + R_1^{(n)}(e^{-\mathrm{i}s},s)\right)
\left( b_{n,-}(e^{\mathrm{i}s},s) + R_2^{(n)}(e^{\mathrm{i}s},s)\right),
\end{align*}
\[
e^{2 \mathrm{i}(n+1)s} = \frac{b_{n,+}(e^{\mathrm{i}s},s) b_{n,-}(e^{-\mathrm{i}s},s)
\left( 1 + R_3^{(n)}(s) \right)}
{b_{n,+}(e^{-\mathrm{i}s},s) b_{n,-}(e^{\mathrm{i}s},s) \left( 1 + R_4^{(n)}(s) \right)}
\]
where
\begin{align*}
R_3^{(n)}(s) &= b_{n,+}^{-1}(e^{\mathrm{i}s},s) R_1^{(n)}(e^{\mathrm{i}s},s)
+ b_{n,-}^{-1}(e^{-\mathrm{i}s},s) R_2^{(n)}(e^{-\mathrm{i}s},s)\\
&+ b_{n,+}^{-1}(e^{\mathrm{i}s},s) b_{n,-}^{-1}(e^{-\mathrm{i}s},s) R_1^{(n)}(e^{\mathrm{i}s},s)
 R_2^{(n)}(e^{-\mathrm{i}s},s),
\end{align*}
\[
R_4^{(n)}(s) = R_3^{(n)}(-s) .
\]
Considering (\ref{eq:b_n_m_p}), we get
\[
e^{2 \mathrm{i}(n+1)s} = e^{-2 \mathrm{i} \eta_n(s)} \frac{1 + R_3^{(n)}(s)}{1 + R_4^{(n)}(s)} .
\]
Let
\[
R_5^{(n)}(s) := \log\left( \frac{1 + R_3^{(n)}(s)}{1 + R_4^{(n)}(s)} \right)
\]
then
\[
e^{2 \mathrm{i}(n+1)s} = e^{-2 \mathrm{i} \eta_n(s) + R_5^{(n)}(s)} .
\]
The last equation is equivalent to the following set of equations:
\[
2 \mathrm{i} (n+1) s = -2 \mathrm{i} \eta_n(s) + R_5^{(n)}(s) + 2 \mathrm{i} \pi j,
\qquad j \in \mathbb{Z} .
\]
Assuming now
\[
R_6^{(n)}(s) := -\frac{R_5^{(n)}(s)}{2\mathrm{i}}
\]
we get
\[
(n+1) s + \eta_n(s) + R_6^{(n)}(s) = \pi j,
\qquad j \in \mathbb{Z} .
\]
Considering now the relations connecting $R_6^{(n)}(s)$ with $R_1^{(n)}(e^{\pm \mathrm{i}s}, s)$
and $R_2^{(n)}(e^{\pm \mathrm{i}s})$, we get now the asymptotic expansion (\ref{eq:r6_estim}).
\end{proof}

\section{Solvability Analysis of Equation (\ref{eq:eta_n_r6})}

Rewrite the equation (\ref{eq:eta_n_r6}) in the form
\begin{equation}
    \label{eq:f_n_r6}
    F_n(s) + \frac{R_6^{(n)}(s)}{n+1} = d_{j,n} ,
\end{equation}
where
\begin{equation}
    \label{eq:f_n_def}
    F_n(s) := s + \frac{\eta_n(s)}{n+1} ,
\end{equation}
and
\[
d_{j,n} := \frac{\pi j}{n+1} .
\]

Along with (\ref{eq:f_n_r6}), consider the approximating equation

\begin{equation}
    \label{eq:f_n_approx}
    F_n(s) = d_{j,n} , \quad j = 1, 2, \ldots, n .
\end{equation}

We introduce the notion of the modulus of continuity in the complex domain. Let a function $f(z)$ be continuous in some bounded domain $G$ of the complex plane. Then the modulus of continuity $f(z)$ is the function:
\[
w_f(\delta) := \sup_{z_{1,2} \in G,\ |z_1 - z_2| \le \delta} |f(z_1)-f(z_2)|,
\qquad 0<\delta\le \delta_0 .
\]
Let us introduce the domains:
\begin{equation}
    \label{eq:pi_j_n_def}
    \Pi_{j,n}(a) := \left\{ s \in \Pi_n(a)\, : \, |s-e_{j,n}| \le \frac{c_n}{n+1} \right\},
\end{equation}
where
\begin{equation} \label{eq:pi_j_n_def1}
e_{j,n} := d_{j,n} - \frac{\eta_n(d_{j,n})}{n+1} \text{ and }
c_n := 2 \left\| R_6^{(n)}(s) \right\|_{\infty} + w_{\eta_n}\left( \frac{2\|\eta_n\|_\infty}{n+1} \right),
\end{equation}
and the norm  $\|\cdot\|_\infty$ is defined in the standard way on the set  $G$, where $G=\Pi_n(a)$. Recall, that
\[
\Pi_n(a) = \left\{ s=s_0+\mathrm{i}\delta \, \vert \, s_0 \in [c n^{-1}, \pi-c n^{-1}],\ \delta \in [-C n^{-1}, C n^{-1}] \right\},
\]
where $c$, $C$ are some fixed positive numbers such that $c$ is small enough and $C$ is large enough.

The following statement will apply the principle of contractive mappings to the analysis of the solvability of 
(\ref{eq:f_n_r6}), (\ref{eq:f_n_approx}).

Let's introduce the mappings
\[
\Phi_{j,n}(s) := d_{j,n} - \frac{\eta_n(s)}{n+1} .
\]

\begin{lm}
\label{lm:s_princip}
Let the function $a \in \mathsf{CSL}^\alpha$, $\alpha \ge 2$.
Then, if  $s \in \Pi_{j,n}(a)$,
\begin{enumerate}[i)]
    \item $\Phi_{j,n}(s) \in \Pi_{j,n}(a)$ .
    \item $\left( \Phi_{j,n}(s) + \dfrac{R_6^{(n)}(s)}{n+1} \right) \in \Pi_{j,n}(a)$.
\end{enumerate}
\end{lm}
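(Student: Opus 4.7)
The plan is a direct verification using the triangle inequality and the way $c_n$ is constructed; contraction itself is not proved here, only invariance of the balls $\Pi_{j,n}(a)$.

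For part i), I would first observe that, by the definitions of $\Phi_{j,n}$ and $e_{j,n}$,
\[
\Phi_{j,n}(s)-e_{j,n}=\frac{\eta_n(d_{j,n})-\eta_n(s)}{n+1},
\]
so $|\Phi_{j,n}(s)-e_{j,n}|\le w_{\eta_n}(|s-d_{j,n}|)/(n+1)$. To control the argument of the modulus of continuity I would write
\[
|s-d_{j,n}|\le |s-e_{j,n}|+|e_{j,n}-d_{j,n}|\le \frac{c_n+|\eta_n(d_{j,n})|}{n+1}\le \frac{c_n+\|\eta_n\|_\infty}{n+1}.
\]
The key numerical input is that for $n$ large, $c_n\le\|\eta_n\|_\infty$: this follows from $\|R_6^{(n)}\|_\infty=o(1/n^{\alpha-2})$ (Lemma~\ref{lm:eta_n_pi_j}) and from $w_{\eta_n}(2\|\eta_n\|_\infty/(n+1))\to 0$, which comes from Lemma~\ref{lm:eta_props3} (the $\eta_n$ are uniformly Lipschitz on $\Pi_n(a)$ and converge to the smooth $\eta$). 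By monotonicity of $w_{\eta_n}$ we then obtain
\[
w_{\eta_n}(|s-d_{j,n}|)\le w_{\eta_n}\!\Bigl(\frac{2\|\eta_n\|_\infty}{n+1}\Bigr)\le c_n,
\]
which gives $|\Phi_{j,n}(s)-e_{j,n}|\le c_n/(n+1)$. I would also check that $\Phi_{j,n}(s)\in\Pi_n(a)$: the real part is $d_{j,n}-\Re\eta_n(s)/(n+1)$, lying in $[cn^{-1},\pi-cn^{-1}]$ for an appropriate choice of $c$ because $d_{j,n}\in[\pi/(n+1),\pi n/(n+1)]$ and $\|\eta_n\|_\infty$ is bounded; similarly the imaginary part is at most $\|\eta_n\|_\infty/(n+1)\le Cn^{-1}$ for $C$ chosen large enough. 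These verifications are routine once $c,C$ are tuned.

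For part ii), the same computation, with the extra term $R_6^{(n)}(s)/(n+1)$, yields
\[
\Bigl|\Phi_{j,n}(s)+\frac{R_6^{(n)}(s)}{n+1}-e_{j,n}\Bigr|\le \frac{w_{\eta_n}(2\|\eta_n\|_\infty/(n+1))+\|R_6^{(n)}\|_\infty}{n+1}\le \frac{c_n}{n+1},
\]
where the last inequality is exactly what the definition $c_n=2\|R_6^{(n)}\|_\infty+w_{\eta_n}(2\|\eta_n\|_\infty/(n+1))$ was designed to produce (the extra copy of $\|R_6^{(n)}\|_\infty$ in $c_n$ absorbs the new term). Inclusion in $\Pi_n(a)$ is verified exactly as in part i), the perturbation $R_6^{(n)}(s)/(n+1)=o(1/n^{\alpha-1})$ being negligible compared with the width of the strip.

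The main obstacle I anticipate is not any single estimate but the uniform (in $j$, for large $n$) verification that $c_n\le \|\eta_n\|_\infty$, which is what allows the bound $|s-d_{j,n}|\le 2\|\eta_n\|_\infty/(n+1)$ to activate the definition of $c_n$. This depends on combining the uniform smoothness of $\eta_n$ from Lemma~\ref{lm:eta_props3} with the asymptotic $\|R_6^{(n)}\|_\infty=o(1/n^{\alpha-2})$ from Lemma~\ref{lm:eta_n_pi_j}. Once this is in place, both parts reduce to the triangle inequality applied to the explicit expressions above.
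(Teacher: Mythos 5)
Your proposal is correct and follows essentially the same argument as the paper: bound the displacement from $e_{j,n}$ by the modulus of continuity of $\eta_n$, feed in $|s - d_{j,n}| \le (c_n + \|\eta_n\|_\infty)/(n+1)$, invoke $c_n \le \|\eta_n\|_\infty$ for large $n$, and read off the conclusion from the definition of $c_n$; the paper proves ii) first and derives i) by setting $R_6^{(n)} \equiv 0$, whereas you do them in the opposite order, but that is immaterial. One small caveat: the justification that $w_{\eta_n}(2\|\eta_n\|_\infty/(n+1)) \to 0$ should not be phrased as the $\eta_n$ being ``uniformly Lipschitz''; for $2 \le \alpha < 3$ Lemma~\ref{lm:eta_props3} only yields a uniform H\"older (or, at $\alpha = 2$, a uniform but unquantified) modulus of continuity for the family $\{\eta_n\}$, and equicontinuity is what the argument actually needs and has.
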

\begin{proof}
We prove ii). Let $s \in \Pi_{j,n}(a)$, then for sufficiently large $n$, we get:
\begin{align*}
& \left| \Phi_{j,n}(s) + \frac{R_6^{(n)}(s)}{n+1} - e_{j,n} \right| \le
\frac{|\eta_n(s) - \eta_n(d_{j,n})|}{n+1} + \frac{|R_6^{(n)}(s)|}{n+1} \\
& \quad \le \frac{w_{\eta_n}(|s - d_{j,n}|)}{n+1} + \frac{\|R_6^{(n)}\|_\infty}{n+1} \\
& \quad \le \dfrac{w_{\eta_n}\left(|s - e_{j,n}| + \frac{|\eta_n(d_{j,n})|}{n+1}\right)}{n+1} + \dfrac{\|R_6^{(n)}\|_\infty}{n+1} \\
& \quad \le \dfrac{w_{\eta_n} \left(\frac{c_n}{n+1} + \frac{\|\eta_n\|_\infty}{n+1} \right)}{n+1} + \frac{\|R_6^{(n)}\|_\infty}{n+1} \\
& \quad \le \dfrac{w_{\eta_n} \left(2 \frac{\|\eta_n\|_\infty}{n+1} \right)}{n+1} + \frac{\|R_6^{(n)}\|_\infty}{n+1} \le \frac{c_n}{n+1} .
\end{align*}
Thus, item ii) of the Lemma is proved.

The item i) is proved similarly if we put $R_6^{(n)}(s) \equiv 0$.
\end{proof}

\begin{thm}
\label{thm:solutions_5_3}
Let the function  $a \in \mathsf{CSL}^\alpha$. Then
\begin{enumerate}[i)]
    \item For $\alpha \ge 2$ the equation (\ref{eq:f_n_r6}) has a unique solution
    $s_{j,n} \in \Pi_{j,n}(a)$, $j=1, 2, \ldots, n$,
    and all  $s_{j,n}$ are different.
    \item Let $s_{j,n}^{*}$ be a solution of the equation (\ref{eq:f_n_approx}) belonging 
    to $\Pi_{j,n}(a)$. Then for $\alpha \ge 3$
    \[ \| s_{j,n} - s_{j,n}^{*} \| = O\left(\sfrac{1}{n^{\alpha-1}}\right) , \]
    where the estimate is uniform in $n$ and $j$.
\end{enumerate}
\end{thm}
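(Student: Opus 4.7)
The plan is to apply the Banach contraction mapping principle separately to the two equations, exploiting the invariance already recorded in Lemma~\ref{lm:s_princip}. For part i), I reformulate (\ref{eq:f_n_r6}) as the fixed-point equation $s = \Psi_{j,n}(s)$ with
\[
\Psi_{j,n}(s) := d_{j,n} - \frac{\eta_n(s)}{n+1} - \frac{R_6^{(n)}(s)}{n+1},
\]
and for (\ref{eq:f_n_approx}) as $s = \Phi_{j,n}(s)$. Lemma~\ref{lm:s_princip} gives, after observing that its estimates only use $|R_6^{(n)}|$ and so are insensitive to sign, that both $\Phi_{j,n}$ and $\Psi_{j,n}$ carry the closed disk $\Pi_{j,n}(a)$ into itself. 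To obtain contraction I need a uniform Lipschitz bound on $\eta_n+R_6^{(n)}$: Lemma~\ref{lm:eta_props3} (together with analyticity of $\eta_n$ in $\Pi_n(a)$ and a Cauchy estimate on a slightly larger strip) yields $\|\eta_n'\|_{\infty}\le C$ uniformly in $n$, and an analogous analysis of the analytic building blocks $b_{n,\pm}$ and of the remainders $R_1^{(n)},R_2^{(n)}$ of Lemma~\ref{lm:theta_n_asymp} yields $\|R_6^{(n)\,\prime}\|_{\infty}=O(1)$ uniformly in $n$. Consequently
\[
|\Psi_{j,n}(s_1)-\Psi_{j,n}(s_2)|\le \frac{C_1}{n+1}\,|s_1-s_2|,
\]
which is a strict contraction for $n$ sufficiently large; the Banach fixed-point theorem then delivers a unique solution $s_{j,n}\in\Pi_{j,n}(a)$ of (\ref{eq:f_n_r6}), and likewise a unique $s_{j,n}^{*}\in\Pi_{j,n}(a)$ of (\ref{eq:f_n_approx}).

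To see that the $s_{j,n}$ with $j=1,\ldots,n$ are pairwise distinct, I observe that the disks $\Pi_{j,n}(a)$ are themselves disjoint for large $n$. Indeed, since $d_{j,n}=\pi j/(n+1)$ and $\|\eta_n'\|_{\infty}\le C$, the centers satisfy
\[
|e_{j+1,n}-e_{j,n}|\ge \frac{\pi}{n+1}-\frac{\|\eta_n'\|_{\infty}\,\pi}{(n+1)^2}\ge \frac{\pi}{2(n+1)}
\]
for large $n$, which exceeds $2c_n/(n+1)$ because $c_n\to 0$ (this in turn follows from $\|R_6^{(n)}\|_\infty=o(1)$ and the uniform continuity of $\eta_n$, which ensures $w_{\eta_n}(2\|\eta_n\|_\infty/(n+1))\to 0$). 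Hence $\Pi_{j,n}(a)\cap\Pi_{k,n}(a)=\emptyset$ for $j\ne k$, so the uniquely located fixed points are distinct.

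For part ii), both $s_{j,n}$ and $s_{j,n}^{*}$ lie in $\Pi_{j,n}(a)$ and satisfy
\[
s_{j,n}+\frac{\eta_n(s_{j,n})}{n+1}+\frac{R_6^{(n)}(s_{j,n})}{n+1}=d_{j,n}=s_{j,n}^{*}+\frac{\eta_n(s_{j,n}^{*})}{n+1}.
\]
Subtracting and using $|\eta_n(s_{j,n})-\eta_n(s_{j,n}^{*})|\le \|\eta_n'\|_{\infty}|s_{j,n}-s_{j,n}^{*}|$ yields
\[
\left(1-\frac{\|\eta_n'\|_{\infty}}{n+1}\right)|s_{j,n}-s_{j,n}^{*}|\le \frac{\|R_6^{(n)}\|_{\infty}}{n+1}.
\]
For $\alpha\ge 3$ the estimate $\|R_6^{(n)}\|_\infty=o(1/n^{\alpha-2})$ of Lemma~\ref{lm:eta_n_pi_j} combined with the absorbable factor on the left produces $|s_{j,n}-s_{j,n}^{*}|=O(1/n^{\alpha-1})$, uniformly in $j$ and $n$.

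The main technical obstacle is the uniform Lipschitz control of $R_6^{(n)}$ on $\Pi_n(a)$ that is needed to close the contraction argument, since the estimate of Lemma~\ref{lm:eta_n_pi_j} is stated only in sup-norm. I expect to handle this by upgrading the $o(1/n^{\alpha-2})$ sup-bound to a $W^{\alpha-2}$ bound (as already appears implicitly via (\ref{eq:norm_tilde_r_1_n}) for the $\tilde R_j^{(n)}$), invoking analyticity in $s$ of $b_{n,\pm}(e^{\pm\mathrm{i}s},s)$ on an annular neighbourhood of $\mathbb{T}\times\Pi_n(a)$ together with Lemma~\ref{lm:sup_hat_b}, and then applying a Cauchy estimate to pass from sup-norm control to derivative control. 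A subordinate but similar point is the precise verification that $c_n\to 0$ at a rate compatible with the contraction constant so that $\Pi_{j,n}(a)$ is simultaneously preserved by $\Psi_{j,n}$ and strictly separated from its neighbours; both aspects reduce to the uniform derivative bounds just described.
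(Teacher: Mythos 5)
Your part ii) coincides with the paper's argument (subtract the two equations, use the uniform bound on $\eta_n'$ available for $\alpha\ge 3$, absorb the Lipschitz term). The problem is part i), where your route differs from the paper's and has a genuine gap in exactly the range $2\le\alpha<3$ that makes the theorem nontrivial. Your contraction argument hinges on the claims $\|\eta_n'\|_\infty\le C$ and $\|R_6^{(n)\,\prime}\|_\infty=O(1)$ uniformly in $n$, to be obtained ``by a Cauchy estimate on a slightly larger strip.'' But $\eta_n$ and $R_6^{(n)}$ are only controlled (bounded, and with $b_n\ne 0$ so that the logarithms and factorizations make sense) on $\Pi_n(a)$, whose width in the imaginary direction is $O(1/n)$; the key inequality (\ref{eq:sin_l_s_estim}) used to bound $\|b_n(\cdot,s)\|_{\alpha-2}$ requires $|L\,\Im s|\le M$ with $L$ up to $n$, and $g_n(s)$ itself grows like $e^{n|\Im s|}$ off that strip. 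A Cauchy estimate at distance $d=O(1/n)$ therefore yields only $\|\eta_n'\|_\infty=O(n)$, so the Lipschitz constant of $\Phi_{j,n}$ is $O(n)/(n+1)=O(1)$ rather than $o(1)$ --- no contraction. This is not a repairable technicality: Lemma~\ref{lm:eta_props3} shows $\eta\in\mathcal{C}^{\alpha-2}$, i.e.\ only H\"older of exponent $\alpha-2$ for $2<\alpha<3$ and merely continuous for $\alpha=2$, so a uniform Lipschitz bound on $\eta_n$ simply does not exist in general below $\alpha=3$. The same objection applies to your separation-of-centers estimate, which also invokes $\|\eta_n'\|_\infty$; there you should instead use $|e_{j+1,n}-e_{j,n}|\ge \pi/(n+1)-w_{\eta_n}(\pi/(n+1))/(n+1)$, which does work.

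The paper avoids this entirely: for existence it only needs the invariance $\Phi_{j,n}(s)+R_6^{(n)}(s)/(n+1)\in\Pi_{j,n}(a)$ of Lemma~\ref{lm:s_princip} (which is proved with the modulus of continuity $w_{\eta_n}$, not a Lipschitz constant) together with compactness of $\Pi_{j,n}(a)$ to extract a convergent subsequence of the iterates whose limit solves (\ref{eq:f_n_r6}); and for uniqueness it uses a counting argument external to the fixed-point setup: the $n$ domains $\Pi_{j,n}(a)$ are pairwise disjoint, each contains at least one solution, each solution produces an eigenvalue of $T_n(a)$ by Lemma~\ref{lm:eta_n_pi_j}, and $T_n(a)$ has at most $n$ eigenvalues, so each domain contains exactly one solution. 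You should either adopt this compactness-plus-counting argument for $2\le\alpha<3$ (reserving your contraction argument for $\alpha\ge3$, where it is valid), or find some other mechanism for uniqueness, since a modulus of continuity alone cannot make $\Psi_{j,n}$ a contraction.
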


\begin{proof}
Let us prove statement i). For this purpose, consider the sequence
\[
s_{j,n}^{(0)} = e_{j,n}, \qquad s_{j,n}^{(k+1)} = \Phi_{j,n}(s_{j,n}^{(k)})
+ \frac{R_6^{(n)}(s_{j,n})}{n+1}, \quad  k=1,2, \ldots .
\]
According to Lemma \ref{lm:s_princip} ii), the sequence $\{s_{j,n}^{(k)}\}_{k=1}^\infty$ is contained in the domain $\Pi_{j,n}(a)$.
Choose from it some convergent subsequence and denote its limit by $\tilde{s}_{j,n}$. Obviously, $\tilde{s}_{j,n}$ satisfies (\ref{eq:f_n_r6}).
Note that for any $j_1 \ne j_2$
\[
\Pi_{j_1,n}(a) \cap \Pi_{j_2,n}(a) = \varnothing ,
\]
because  $|e_{j_1,n} - e_{j_2,n}| \ge \frac{\Delta}{n+1}$, ($\Delta > 0$), while
$\mathrm{diam}\, \Pi_{j,n}=o(1/n)$.
Thus, according to the Lemma \ref{lm:eta_n_pi_j}, the numbers  $g(\tilde{s}_{j,n})$, $j=1, 2, \ldots, n$ are eigenvalues of the matrix $T_n(a)$.
Since this matrix has at most $n$, then $\tilde{s}_{j,n}$ is a unique solution of equation (\ref{eq:f_n_r6}) in the domain $\Pi_{j,n}(a)$. Denoting $\tilde{s}_{j,n} := s_{j,n}$, we completed the proof of i).

Let us prove  ii). Substituting into the equations  (\ref{eq:f_n_r6}) and (\ref{eq:f_n_approx})
respectively, $s_{j,n}$ and  $s_{j,n}^{*}$,  and subtracting the second expression  from the first one, we get
\begin{equation}
    \label{eq:s_j_n_star_estim}
(s_{j,n} - s_{j,n}^{*}) + \frac{\eta_n(s_{j,n}) - \eta_n(s_{j,n}^{*})}{n+1}
= -\frac{R^{(n)}_{6}(s_{j,n})}{n+1} .
\end{equation}
Since $\alpha \ge 3$, according to Lemma \ref{eq:b_n_tilde_t_tau}, $\eta_n(s)$ has a derivative that is bounded uniformly in $n$.
In this way we obtain
\[
|\eta_n(s_{j,n}) - \eta_n(s_{j,n}^{*})| \le \eta_1 |s_{j,n} - s_{j,n}^{*}|,
\]
where
\[
\eta_1 = \sup_{n\in\mathbb{N}} \sup_{s\in \Pi_n(a)} |\eta_n'(s)| < \infty .
\]
From (\ref{eq:s_j_n_star_estim}) we get
\[
|s_{j,n} - s_{j,n}^{*}| \le \frac{\eta_1|s_{j,n} - s_{j,n}^{*}|}{n+1}
+ \frac{\left| R^{(n)}_6(s_{j,n}) \right|}{n+1} .
\]
From (\ref{eq:r6_estim}) it follows that
\[
|s_{j,n} - s_{j,n}^{*}| \left( 1 - \frac{\eta_1}{n+1} \right) \le \frac{\left| R^{(n)}_6(s_{j,n}) \right|}{n+1}
\]
and finally follows
\[
|s_{j,n} - s_{j,n}^{*}| = o\left( \sfrac{1}{n^{\alpha-1}} \right) .
\]
\end{proof}

The statement proved above shows that the roots $s_{j,n}$ of the equation (\ref{eq:f_n_r6}) can be approximated by the roots $s_{j,n}^{*}$ of the equation (\ref{eq:f_n_approx}) for large values of $n$.
Besides, the values $s_{j,n}^{*}$ can be approximated using the method of successive approximations by the values $s_{j,n}^{*(k)}$ defined in the following way:
\begin{equation}
    \label{eq:s_j_n_gamma}
    s_{j,n}^{*(0)} = e_{j,n}, \quad s_{j,n}^{*(k+1)} = \Phi_n(e_{j,n}^{*(k)}),
    \quad k=0, 1, \ldots .
\end{equation} 

\begin{lm}
Let the function $a \in \mathsf{CSL}^\alpha$, $\alpha \ge 3$,
then the equation (\ref{eq:f_n_approx}) has a unique solution $s_{j,n}^{*} \in \Pi_{j,n}(a)$,
$j=1, 2, \ldots, n$ and 
for sufficiently large $n$, the following estimate is valid:
\begin{equation}
    \label{eq:s_j_n_star_3}
    |s_{j,n}^{*} - s_{j,n}^{*(k)}| \le 2 \frac{\eta}{\eta_1^2} \left( \frac{\eta_1}{n+1} \right)^{k+2} ,
\end{equation}
where
\begin{align*}
\eta &= \sup_{n\in\mathbb{N}} \sup_{s\in\Pi_{n}(a)} |\eta_n(s)|, \\
\eta_1 &= \sup_{n\in\mathbb{N}} \sup_{s\in\Pi_{n}(a)} |\eta'_n(s)|.
\end{align*}
\end{lm}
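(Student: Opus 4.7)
The plan is to recognize that equation (\ref{eq:f_n_approx}) is equivalent to the fixed-point equation $s = \Phi_{j,n}(s)$, where $\Phi_{j,n}(s) = d_{j,n} - \eta_n(s)/(n+1)$, and to apply the Banach contraction mapping theorem to $\Phi_{j,n}$ on the closed disk $\Pi_{j,n}(a)$. The self-mapping property $\Phi_{j,n}(\Pi_{j,n}(a)) \subset \Pi_{j,n}(a)$ is precisely item i) of Lemma \ref{lm:s_princip}. For the contraction property I would estimate
\[
|\Phi_{j,n}(s_1) - \Phi_{j,n}(s_2)| = \frac{|\eta_n(s_1) - \eta_n(s_2)|}{n+1} \le \frac{\eta_1}{n+1}\,|s_1 - s_2|,
\]
which uses that $\eta_1 = \sup_{n,s}|\eta_n'(s)| < \infty$. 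This finiteness is exactly where the hypothesis $\alpha \ge 3$ enters, via Lemma \ref{lm:eta_props3}: with $m = [\alpha-2] \ge 1$ the representation there shows $\eta_n'$ exists and is uniformly bounded on $\Pi_n(a)$. Thus, for $n+1 > \eta_1$, $\Phi_{j,n}$ is a genuine contraction with constant $q := \eta_1/(n+1) < 1$, so Banach's theorem produces the unique fixed point $s_{j,n}^* \in \Pi_{j,n}(a)$.

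For the quantitative estimate (\ref{eq:s_j_n_star_3}), I would first control the initial iteration step. Since $s_{j,n}^{*(0)} = e_{j,n} = d_{j,n} - \eta_n(d_{j,n})/(n+1)$, direct computation combined with the Lipschitz bound gives
\[
|s_{j,n}^{*(1)} - s_{j,n}^{*(0)}| = \frac{|\eta_n(d_{j,n}) - \eta_n(e_{j,n})|}{n+1} \le \frac{\eta_1}{n+1}\cdot |d_{j,n} - e_{j,n}| \le \frac{\eta_1\,\eta}{(n+1)^2}.
\]
The standard a posteriori estimate for a contraction with constant $q$ then yields
\[
|s_{j,n}^{*} - s_{j,n}^{*(k)}| \le \frac{q^k}{1-q}\,|s_{j,n}^{*(1)} - s_{j,n}^{*(0)}|.
\]
Inserting $q = \eta_1/(n+1)$ and using $1/(1-q) \le 2$ for $n+1 \ge 2\eta_1$, this geometric tail bound reproduces exactly the form of the right-hand side of (\ref{eq:s_j_n_star_3}), with the constants only involving $\eta$ and $\eta_1$.

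The main obstacle is the uniform Lipschitz control on $\eta_n$ over the complex strip $\Pi_n(a)$: this is not immediate because $\eta_n$ is built from a singular Cauchy integral involving the Wiener-Hopf factorisation of $\hat b_n(\cdot,\lambda)$, and one needs estimates uniform both in $n$ and in the complex variable $s$. Once Lemma \ref{lm:eta_props3} is in hand, however, the remainder is a textbook application of Banach's theorem with no further conceptual difficulties; the complementary ingredient $\eta = \sup_{n,s}|\eta_n(s)| < \infty$ used in the initial-step estimate also follows directly from Lemma \ref{lm:eta_props3} (taking $k=0$ there).
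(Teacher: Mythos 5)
Your argument is essentially the paper's own: both iterate $\Phi_{j,n}$ from $s_{j,n}^{*(0)}=e_{j,n}$, establish the Lipschitz constant $q=\eta_1/(n+1)<1$ (for which $\alpha\ge3$ and Lemma~\ref{lm:eta_props3} are what guarantee the uniform bound on $\eta_n'$, with Lemma~\ref{lm:s_princip}~i) keeping the iterates in $\Pi_{j,n}(a)$), and sum the geometric tail — you phrase it as Banach's a posteriori estimate while the paper constructs the Cauchy bound on the iterates by hand, but the computation is the same. One small caveat: carrying your estimate through with $|s_{j,n}^{*(1)}-s_{j,n}^{*(0)}|\le\eta\eta_1/(n+1)^2$ and $1/(1-q)\le2$ actually yields $2\eta\,\eta_1^{-1}\bigl(\eta_1/(n+1)\bigr)^{k+2}$ rather than the stated $2\eta\,\eta_1^{-2}\bigl(\eta_1/(n+1)\bigr)^{k+2}$, a one-power-of-$\eta_1$ discrepancy that is already present in the paper itself (the inductive display~(\ref{eq:s_j_n_gamma2}) drops a factor of $\eta_1$ relative to its own computed cases $k=0,1$) and is harmless since the decay rate in $n$ is unaffected.
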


\begin{proof}
We show that sequence $\left\{s_{j,n}^{*(k)}\right\}_{k=1}^\infty$ is convergent.
Indeed, according to Lemma \ref{lm:eta_props3}, the functions $\eta_n(s)$ and $\eta'_n(s)$ are bounded uniformly respect to $n$.
That is, the value
\[
\sup_{n\in\mathbb{N}} \sup_{s\in \Pi_n(a)} |\eta_n'(s)| = \eta_1 < \infty .
\]
Then we have:
\begin{align*}
|s_{j,n}^{*(1)} - s_{j,n}^{*(0)}| = \dfrac{| \eta_n(e_{j,n}) - \eta_{n}(d_{j,n}) |}{n+1} \le
\eta_1 \frac{|e_{j,n} - d_{j,n} |}{n+1} \\
= \eta_1 \dfrac{|\eta_n(d_{j,n})|}{(n+1)^2} \le \frac{\eta_1 \eta}{(n+1)^2}.
\end{align*}
Further
\begin{align*}
|s_{j,n}^{*(2)} - s_{j,n}^{*(1)}| = |\Phi_{j,n}(s_{j,n}^{*(1)}) - \Phi_{j,n}(s_{j,n}^{*(0)})|
= \frac{|\eta_n(s_{j,n}^{*(1)}) - \eta_n(s_{j,n}^{*(0)}) |}{n+1} \\
\le \eta_1 \frac{|s_{j,n}^{*(1)} - s_{j,n}^{*(0)}|}{n+1} \le
 \frac{\eta_1^2 \eta}{(n+1)^3} .
\end{align*}
Similarly
\begin{equation}
    \label{eq:s_j_n_gamma2}
    |s_{j,n}^{*(k+1)} - s_{j,n}^{*(k)}| \le \frac{\eta \eta_1^k }{(n+1)^{k+2}} .
\end{equation}
Since $\sfrac{\eta_1}{n+1} < 1$ for sufficiently large $n$, then the sequence
$\left\{s_{j,n}^{*(k)}\right\}_{k=1}^\infty$ converges to $s_{j,n}^{*}$.

From the estimate (\ref{eq:s_j_n_gamma2}) it follows that
\[
|s_{j,n}^{*(k+m)} - s_{j,n}^{*(k)}| \le \frac{\eta}{\eta_1^2} \left( \frac{\eta_1}{n+1} \right)^{k+2}
\dfrac{1 - \left(\sfrac{\eta_1}{n+1}\right)^{m+1}}{1 - \sfrac{\eta_1}{n+1}} .
\]
Assuming in this inequality that $n+1 > \eta_1$ and passing to the limit when $m\to \infty$, we get that
\[
|s_{j,n}^{*} - s_{j,n}^{*(k)}| \le \frac{\eta}{\eta_1^2} \left( \frac{\eta_1}{n+1} \right)^{k+2}
\frac{1}{1 - \sfrac{\eta_1}{n+1}} .
\]
The assertion of the Lemma obviously follows from this inequality.

\end{proof}

Now we are ready to  prove the main results of the work.

\section{Proof of the main results}

Theorem \ref{thm:a1} follows from Lemma \ref{lm:eta_n_pi_j} and Theorem \ref{thm:solutions_5_3}.

\subsection{Proof of Theorem \ref{thm:sjn1}}

Let $2 \le \alpha < 3$. We estimate the error term
\[
\Delta_{2}^{(n)}(j) := s_{j,n} - e_{j,n} .
\]
We express  $s_{j,n}$ from equation  (\ref{eq:f_n_r6}) and obtain
\begin{align*}
|s_{j,n} - e_{j,n}|  &= \left| \frac{\eta_n(d_{j,n}) - \eta_n(s_{j,n})}{n+1}
+ \frac{R_n^{(6)}(s)}{n+1} \right| \le
\frac{w_{\eta_n}(|s_{j,n} - d_{j,n}|)}{n+1} + \frac{|R_n^{(6)}(s)|}{n+1} \\
& \le  \frac{w_{\eta_n}\left(|s_{j,n} - e_{j,n}| + \frac{\eta_n(d_{j,n})}{n+1}\right)}{n+1}
 + \frac{|R_n^{(6)}(s)|}{n+1}
 \le \frac{w_{\eta_n}\left(\frac{c_n + \|\eta\|_{\infty}}{n+1}\right)}{n+1}
 + \frac{\|R_n^{(6)}\|_\infty}{n+1} ,
\end{align*}
where the value  $c_n$ is given in (\ref{eq:pi_j_n_def1}). Thus, for sufficiently large $n$ we get
\begin{equation}
    \label{eq:sjn_m_djn_estim}
    |s_{j,n} - e_{j,n}| \le \frac{w_{\eta_n}\left(\frac{2 \eta}{n+1}\right)+\|R_n^{(6)}\|_\infty}{n+1} ,
\end{equation}
where  $\eta_n$ is given by formula (\ref{eq:b_n_m_p_frac}).

Let now  $2 < \alpha < 3$. Then, according to Lemma \ref{lm:eta_props3}, $\eta_n \in H^{\alpha-2}(\Pi_n(a))$
with norm bounded uniformly in $n$. Thus 
\begin{equation} \label{eq:6.1}
|s_{j,n} - e_{j,n}| = O\left( \left( \frac{2 \eta}{n+1} \right)^{\alpha-2} \cdot \frac{1}{n+1}
\right) + \frac{o(\sfrac{1}{n^{\alpha-2}})}{n+1} = O\left(\sfrac{1}{n^{\alpha - 1}} \right) ,
\end{equation}
where the estimate is uniform in  $n$.
Let $\alpha=2$. Then $\eta(s) \in C$ and $\eta_n(s)$ has a modulus of continuity with evaluation
uniform in $n$.
Thus, (\ref{eq:sjn_m_djn_estim}) implies
\[
    \Delta_2^{{(n)}}(j)=o\left(\sfrac{1}{n} \right).
\]
And we have that
\begin{equation}
\label{eq:sjn_eq_djn}
s_{j,n} = d_{j,n} - \frac{\eta_n(d_{j,n})}{n+1} + \Delta_2^{{(n)}}(j).
\end{equation}
Thus, from the formulas (\ref{eq:6.1}), (\ref{eq:sjn_eq_djn}) we get the following equality

\[
|\Delta_2^{{(n)}}(j)| =
\begin{cases}
o(1/n), \ \alpha = 2, \\
O\left( n^{-(\alpha-1)} \right),\ 2 < \alpha < 3 .
\end{cases}
\]
Take into account that
\[
\eta_n(d_{j,n}) = o\left(\sfrac{1}{n^{\alpha - 2}} \right)
\]
(\ref{eq:6.1}) give 
Theorem 2 for case $0 \le \alpha < 3$.

Now suppose $3 \le \alpha < 4$. Then consider the difference
\[
\tilde{\Delta}^{(n)}_2(j) := s_{j,n} - s_{j,n}^{*(1)} .
\]
From Theorem \ref{thm:solutions_5_3},  ii)
\[
|s_{j,n} - s_{j,n}^{*}| = o\left( \sfrac{1}{n^{\alpha-1}} \right).
\]
On the other hand, the estimate (\ref{eq:s_j_n_star_3}) gives
\[
|s_{j,n}^{*} - s_{j,n}^{*(1)}| = O\left( \sfrac{1}{n^3}\right).
\]
In this way we have 
\begin{equation}
    \label{eq:delta_n_2_j_estim}
    |\tilde{\Delta}^{(n)}_2(j)| = o\left( \sfrac{1}{n^{\alpha-1}} \right),
\end{equation}
and we can write
\begin{align*}
s_{j,n} &= s_{j,n}^{*(1)} + \tilde{\Delta}^{(n)}_2(j) =
d_{j,n} -\frac{\eta_n(s^{*(0)}_{j,n})}{n+1} + \tilde{\Delta}^{(n)}_2(j) \\
&= d_{j,n}-\dfrac{\eta_n\left( d_{j,n} - \frac{\eta_n(d_{j,n})}{n+1} \right)}{n+1} + \tilde{\Delta}^{(n)}_2(j).
\end{align*}

Since the function $\eta_n(d_{j,n})$ has a derivative, according to Lemma \ref{lm:eta_props3}, with $H^{\alpha-3}(\Pi_n(a))$-norm
bounded uniformly on $n$, then we have that
\[
s_{j_n} = d_{j,n} - \frac{\eta_n(d_{j,n})}{n+1} + \frac{\eta_n'(d_{j,n})\eta_n(d_{j,n})}{(n+1)^2}
+  O\left( \sfrac{1}{n^3} \right) + \tilde{\Delta}_n^{(2)}(j).
\]

Using the Lemma \ref{lm:eta_props3} again and the relation (\ref{eq:delta_n_2_j_estim}), we obtain the statement of the Theorem \ref{thm:sjn1} for the case $3 \le \alpha < 4$.

The case of $\ell \le \alpha < \ell + 1$, $\ell \ge 4$ is treated in a similar way, using the iteration $\varphi_{j,n}^{*(\ell-1)}$ as an approximating expression.

\subsection{Proof of the Theorem \ref{thm:lj1}}

\begin{proof}
From the proved theorem \ref{thm:sjn1} and the definition of the function $g$ we obtain the assertions of the Theorem~\ref{thm:lj1}.
Indeed, since $\lambda^{(n)}_j = g_n(\varphi^{(n)}_j)$, consider the Taylor series decomposition at the point $d_{j,n}$ for the function $g_n$.

We prove formula (\ref{eq:lambda1}) for the first two terms of the expansion in the case $[\alpha]=2$.
As an increment of the argument $\Delta x$ consider the expression
$-\dfrac{\eta(d_{j,n})}{n+1} + \Delta_2^{(n)}(j)$.
According Taylor's formula we have
\[
g_n(x_0+\Delta x) = g_n(x_0) + g_n'(x_0) \Delta x + O(\Delta x^2).
\]
Hence, taking into account the definitions of the functions $g_n$, we obtain the decomposition (\ref{eq:lambda1}):
\begin{align*}
\lambda^{(n)}_j &= g_n(d_{j,n}) - g_n'(d_{j,n}) \left( \frac{\eta(d_{j,n})}{n+1} + \Delta^{(n)}_2(j) \right)
+ O \left( \frac{\eta(d_{j,n})}{n+1} + \Delta^{(n)}_2(j)  \right)^2 \\
&= g_n(d_{j,n}) - \frac{g_n'(d_{j,n}) \eta(d_{j,n})}{n+1} -  g_n'(d_{j,n}) \Delta^{(n)}_2(j)
+ O \left( \frac{\eta(d_{j,n})}{n+1} + \Delta^{(n)}_2(j)  \right)^2 .
\end{align*}
Note now that when $\alpha=2$, 
\[
g_n'(d_{j,n}) = O(d_{j,n}(\pi-d_{j,n})), \qquad \Delta^{(n)}_2(j) = o(1/n),
\qquad \eta(d_{j,n}) = O(d_{j,n}(\pi-d_{j,n})) .
\]
The error term is
\[
o\left( \frac{d_{j,n}(\pi-d_{j,n})}{n} \right) + O \left( \frac{\eta(d_{j,n})}{n+1} + \Delta^{(n)}_2(j)  \right)^2
= o\left( \frac{d_{j,n}(\pi-d_{j,n})}{n} \right).
\]

It now remains to note that for points $\varphi$ lying on the real line, by virtue of Lemma \ref{lm:norm_alpha_n_k}, we have  the equality
$$g_n^{(k)}(\varphi) = g^{(k)}(\varphi) + o(n^{-(\alpha-k)}), \qquad k=0, 1, \ldots , [\alpha].$$
Thus, we obtain that
\[
\lambda^{(n)}_j = g(d_{j,n}) - \frac{g'(d_{j,n}) \eta(d_{j,n})}{n+1}
+ o\left( \frac{d_{j,n}(\pi-d_{j,n})}{n}\right) + o\left( \sfrac{1}{n^2}\right)
+ o\left( \sfrac{1}{n^2}\right) .
\]

We now consider the case of $2 < \alpha < 3$.
Repeating the above reasoning, we obtain the required estimate of the remainder:
\[
O\left( \frac{d_{j,n}(\pi-d_{j,n})}{n^{\alpha-1}} \right).
\]

Consider the case of $[\alpha] = 3$:
\begin{align*}
\lambda^{(n)}_j &=  g_n(d_{j,n}) + g_n'(d_{j,n})\left(- \dfrac{\eta(d_{j,n})}{n+1} + \dfrac{\eta(d_{j,n})\eta'(d_{j,n})}{(n+1)^2} + O \left(\sfrac{1}{n^{\alpha-1}}\right)\right) \\
&\quad + \dfrac{g_n''(d_{j,n})}{2}\left(- \dfrac{\eta(d_{j,n})}{n+1} + \dfrac{\eta(d_{j,n})\eta'(d_{j,n})}{(n+1)^2}
+ O \left(\sfrac{1}{n^{\alpha-1}}\right)\right)^2 + O\left( \frac{d_{j,n}(\pi - d_{j,n})}{n} \right)^3 \\
& =g_n(d_{j,n}) - \frac{g_n'(d_{j,n})\eta(d_{j,n})}{n+1} + 
\frac{\frac{1}{2} g_n''(d_{j,n})\eta^2(d_{j,n}) + g_n'(d_{j,n})\eta(d_{j,n})\eta'(d_{j,n})}{n+1}
+ O\left( \frac{d_{j,n}(\pi-d_{j,n})}{n^{\alpha-1}} \right)
\end{align*}
Also, as above, using Lemma \ref{lm:norm_alpha_n_k}, we obtain that
\begin{align*}
\lambda^{(n)}_j &=  g(d_{j,n}) + g'(d_{j,n})\left(- \dfrac{\eta(d_{j,n})}{n+1} + \dfrac{\eta(d_{j,n})\eta'(d_{j,n})}{(n+1)^2} + O \left(\sfrac{1}{n^{\alpha-1}}\right)\right) \\
&\quad + \dfrac{g''(d_{j,n})}{2}\left(- \dfrac{\eta(d_{j,n})}{n+1} + \dfrac{\eta(d_{j,n})\eta'(d_{j,n})}{(n+1)^2}
+ O \left(\sfrac{1}{n^{\alpha-1}}\right)\right)^2 + O\left( \frac{d_{j,n}(\pi - d_{j,n})}{n} \right)^3 + o\left( \sfrac{1}{n^{\alpha}} \right) \\
& =g(d_{j,n}) - \frac{g'(d_{j,n})\eta(d_{j,n})}{n+1} + 
\frac{\frac{1}{2} g_n''(d_{j,n})\eta^2(d_{j,n}) + g_n'(d_{j,n})\eta(d_{j,n})\eta'(d_{j,n})}{n+1}
+ O\left( \frac{d_{j,n}(\pi-d_{j,n})}{n^{\alpha-1}} \right)
\end{align*}

Thus, we obtain the formula (\ref{eq:lambda1}) and the estimate
\[
\Delta_3^{(n)}(j) = O\left( \frac{d_{j,n}(\pi - d_{j,n})}{n^{\alpha-1}} \right).
\]
The general case is proved similarly.
\end{proof}

\newpage

\bibliographystyle{elsarticle-num}
\bibliography{toeplitz}

\section*{Acknowledgments}
Research of the authors S.M. Grudsky and E.~Ram\'{i}rez de Arellano was supported by CONACYT grant 238630.

Research of the author I.S.~Malisheva was supported by the Ministry of Education and Science of the Russian Federation, Southern Federal University (Project № 1.5169.2017/8.9).

Research of the author V.A.~Stukopin was supported by ``Domestic Research in Moscow'' Interdisciplinary  Scientific Center J.-V. Poncelet (CNRS UMI 2615) and  Scoltech Center for Advanced Study.

\end{document}